\newcommand{\CC}{{\mathbb C}}
\newcommand{\cD}{{\mathscr D}}
\newcommand{\cE}{{\mathscr E}}
\newcommand{\cF}{{\mathscr F}}
\newcommand{\cG}{{\mathscr G}}
\newcommand{\cH}{{\mathscr H}}
\newcommand{\cJ}{{\mathscr J}}
\newcommand{\cK}{{\mathscr K}}
\newcommand{\cL}{{\mathscr L}}
\newcommand{\cM}{{\mathscr M}}
\newcommand{\cN}{{\mathscr N}}
\newcommand{\cO}{{\mathscr O}}
\newcommand{\cQ}{{\mathscr Q}}
\newcommand{\cS}{{\mathscr S}}
\newcommand{\cT}{{\mathscr T}}
\newcommand{\cU}{{\mathscr U}}
\newcommand{\cV}{{\mathscr V}} 
\newcommand{\cX}{{\mathscr X}} 
\newcommand{\cY}{{\mathscr Y}}
\newcommand{\dra}{\dashrightarrow}
\newcommand{\hra}{\hookrightarrow}
\newcommand{\la}{\langle}
\newcommand{\lra}{\longrightarrow}
\newcommand{\NN}{{\mathbb N}}
\newcommand{\ov}{\overline}
\newcommand{\PP}{{\mathbb P}}
\newcommand{\QQ}{{\mathbb Q}}
\newcommand{\ra}{\rangle}
\newcommand{\wh}{\widehat}
\newcommand{\wt}{\widetilde}
\newcommand{\ZZ}{{\mathbb Z}}
\theoremstyle{plain}
\newtheorem{thm}{Theorem}[section]
\newtheorem*{thm*}{Theorem}
\newtheorem{clm}[thm]{Claim}
\newtheorem{crl}[thm]{Corollary}
\newtheorem*{hyp*}{Hypothesis}
\newtheorem{lmm}[thm]{Lemma}
\newtheorem{prp}[thm]{Proposition}
\newtheorem{prp-dfn}[thm]{Proposition-Definition}
\theoremstyle{definition}
\newtheorem{dfn}[thm]{Definition}
\theoremstyle{remark}
\newtheorem*{qst*}{Main Question}
\newtheorem{rmk}[thm]{Remark}
\DeclareMathOperator{\Aut}{Aut}
\DeclareMathOperator{\ch}{ch}
\DeclareMathOperator{\CH}{CH}
\DeclareMathOperator{\cl}{cl}
\DeclareMathOperator{\Def}{Def}
\DeclareMathOperator{\disc}{disc}
\DeclareMathOperator{\divisore}{div}
\DeclareMathOperator{\End}{End}
\DeclareMathOperator{\Hom}{Hom}
\DeclareMathOperator{\im}{Im}
\DeclareMathOperator{\NL}{\mathit{NL}}
\DeclareMathOperator{\NS}{NS}
\DeclareMathOperator{\Pic}{Pic}
\DeclareMathOperator{\pr}{pr}
\DeclareMathOperator{\rk}{rk}
\DeclareMathOperator{\sing}{sing}
\DeclareMathOperator{\Sym}{Sym}
\DeclareMathOperator{\Td}{Td}
\begin{document}
 \title{Rigid stable vector bundles on hyperk\"ahler varieties of type $K3^{[n]}$}
 \author{Kieran G. O'Grady}
 \address{Dipartimento di Matematica, 
 Sapienza Universit\`a di Roma,
 P.le A.~Moro 5,
 00185 Roma - ITALIA}
 \email{ogrady@mat.uniroma1.it}
\date{\today}
\thanks{Partially supported by PRIN 2017YRA3LK}
\begin{abstract}
We prove existence and unicity of slope stable vector bundles   on a general polarized hyperk\"ahler (HK) variety of type $K3^{[n]}$ with certain discrete invariants, provided the rank and the first two Chern classes of the vector bundle satisfy certain equalities. The latter hypotheses at first glance appear to be quite restrictive, but in fact we might have listed almost all slope stable rigid projectively hyperholomorphic vector bundles on  polarized HK varieties of type $K3^{[n]}$ with $20$ moduli.  
\end{abstract}

  \maketitle
\bibliographystyle{amsalpha}
\section{Introduction}\label{sec:intro}
\subsection{Background}
\setcounter{equation}{0}
 A prominent r\^ole in the theory of $K3$ surfaces is played by spherical  (i.e.~rigid and simple) vector bundles. In~\cite{ogfascimod}  we have proved existence and uniqueness results for stable vector bundles on  general polarized hyperk\"ahler (HK) variety of type 
$K3^{[2]}$ with certain discrete invariants (of the polarization and of the vector bundle).   In the present paper we show that 
the main result in~\cite{ogfascimod}  extends to HK varieties of type $K3^{[n]}$ of arbitrary (even) dimension. More precisely, we prove  that for certain choices of rank and first two Chern classes  on a polarized HK variety $(X,h)$  of type $K3^{[n]}$, 
 there exists one and only one 
 stable vector bundle with the assigned  rank and first two Chern classes  provided the moduli point of $(X,h)$ is a general point of a certain irreducible component of the relevant moduli space of polarized $HK$ varietes.
 
We like to think of this result as evidence in favour of the following slogan: \emph{stable vector bundles on higher dimensional HK manifolds behave as well as  stable sheaves on $K3$ surfaces, provided one restricts to (stable) vector bundles whose projectivization extends to all small deformations of the base HK manifold (i.e.~projectively hyperhomolomorphic vector bundles)}.
\subsection{The main result}\label{subsec:lalaland}
\setcounter{equation}{0}
Let $\cK^d_e(2n)$ be the moduli space of polarized HK varieties  of type $K3^{[n]}$ of degree $e$ and divisibility $d$. Thus 
$\cK^d_e(2n)$ parametrizes isomorphism classes of couples $(X,h)$ where $X$ is a HK manifold  of type 
$K3^{[n]}$, $h\in\NS(X)$ is the class of an ample divisor class  (we assume that $h$ is primitive) such that $q_X(h)=e$ and 
$\divisore(h)=d$, where $q_X$ is the Beauville-Bogomolov-Fujiki (BBF) quadratic form of $X$ and 
$\divisore(h)$ is the divisibility of $h$, i.e.~the positive generator of $q_X(h,H^2(X;\ZZ))$. Note that 
$\divisore(h)$ divides $2(n-1)$. It is known under which hypotheses
$\cK^d_e(2n)$ is not empty. If that is the case, then it is a quasi-projective variety (not necessarily irreducible) of pure dimension $20$.  

We recall that if $\cF$ is a (coherent) sheaf on a complex smooth variety the \emph{discriminant of $\cF$} is defined to be the Betti cohomology class
\begin{equation}
\Delta(\cF):=2r c_2(\cF)-(r-1) c_1(\cF)^2=-2r \ch_2(\cF)+\ch_1(\cF)^2.
\end{equation}
\begin{thm}\label{thm:unicita}
 Let $n,r_0,g,l,e\in\NN_{+}$, with $n\ge 2$, and let
\begin{equation*}
\ov{e}:=
\begin{cases}
\text{$e$ if $r_0$ is even,} \\
 \text{$4e$  if $r_0$ is odd.}
\end{cases}
\end{equation*}
 Assume that 
\begin{equation}\label{adancona}
\text{$g$ divides}
\begin{cases}
\text{$(r_0-1)$ if $r_0$ is even,} \\
 \text{$(r_0-1)/2$  if $r_0$ is odd,}
\end{cases}
\end{equation}
that
\begin{equation}\label{nicoletta}
 l|(n-1),\quad \gcd\{l, r_0\}=1,\quad \gcd\left\{l, \frac{r_0-1}{g}\right\}=1,
\end{equation}
 that
\begin{equation}\label{nuvolari}
g^2 \cdot \ov{e} +2(n-1)(r_0-1)^2+8 \equiv 0 \pmod{8r_0},
\end{equation}
and that
\begin{equation}\label{cantarini}
\ov{e}+\frac{2(n-1)(r_0-1)^2}{g^2}\equiv 0\pmod{8l^2}.
\end{equation}
Let $i\in\{1,2\}$ be such that $i\equiv r_0\pmod{2}$.
Then  
there exists an irreducible component of $\cK^{il}_e(2n)$, denoted by $\cK^{il}_e(2n)^{\rm good}$ (see Definition~\ref{dfn:compbuona}) such that for a  general  
 $[(X,h)]\in \cK^{il}_e(2n)^{\rm good}$  there exists one and only one (up to isomorphism) $h$ slope-stable vector bundle $\cE$ on $X$ such that
 \begin{equation}\label{chernvbe}
r(\cE)=r_0^n,\quad c_1(\cE)=\frac{g\cdot r_0^{n-1}}{i} h,\quad \Delta(\cE)  =  \frac{r_0^{2n-2}(r_0^2-1)}{12}c_2(X). 
\end{equation}
Moreover for such a vector bundle $H^p(X,End^0(\cE))=0$ for all $p$. 
\end{thm}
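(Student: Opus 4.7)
The plan is to prove existence and uniqueness by first constructing a candidate vector bundle on a Hilbert scheme $X=S^{[n]}$ starting from a spherical bundle on a K3 surface $S$, verifying that it is rigid and projectively hyperholomorphic, and then propagating these properties to all small deformations of $X$ inside a specific irreducible component of $\cK^{il}_e(2n)$, which will be baptized the good one. This strategy mirrors the one used for $n=2$ in \cite{ogfascimod}, and the body of the paper presumably generalizes that construction.

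The construction is as follows. The arithmetic hypotheses (\ref{adancona})--(\ref{cantarini}) are designed so that, on a suitable polarized K3 surface $(S,D)$, there exists a primitive Mukai vector $v=(r_0,c,s)$ with $v^2=-2$ and $c\in \QQ\cdot D$; Mukai's theory then produces a unique $D$-slope-stable spherical vector bundle $F$ on $S$ with Mukai vector $v$. Using either the Bridgeland--King--Reid equivalence applied to the $\mathfrak{S}_n$-equivariant object $F^{\boxtimes n}\in D^b(S^n)$, or equivalently a universal secant/tautological construction on $S^{[n]}$, one obtains a vector bundle $\cE_0$ on $X=S^{[n]}$ of rank $r_0^n$. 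A Chern-character computation (via Nakajima-type/Lehn formulae) matches the invariants (\ref{chernvbe}): the first Chern class pairs correctly with the natural polarization $h\in\NS(S^{[n]})$ coming from $D$ thanks to the congruences in (\ref{nuvolari}) and (\ref{cantarini}), and the discriminant lies on the ray $\QQ_+ c_2(X)$ with the prescribed constant $\tfrac{r_0^{2n-2}(r_0^2-1)}{12}$, which is exactly the signature of a modular sheaf in the sense of Markman.

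The rigidity of $F$, expressed by $\Ext^*_S(F,F)=H^*(S,\cO_S)$, transports through the equivalence to $\Ext^*_X(\cE_0,\cE_0)=H^*(X,\cO_X)$, whence $H^p(X,End^0(\cE_0))=0$ for all $p$; in particular the vanishing of $H^2(End^0(\cE_0))$ identifies $\cE_0$ as projectively hyperholomorphic. One then defines $\cK^{il}_e(2n)^{\rm good}$ to be the irreducible component of $\cK^{il}_e(2n)$ containing the closure of the locus of such polarized Hilbert schemes $(S^{[n]},h)$. By the deformation theory of projectively hyperholomorphic bundles, $\PP(\cE_0)$ deforms (as a projective bundle) over the whole component, and the congruences (\ref{nicoletta}) together with $\gcd\{l,r_0\}=1$ guarantee that the Brauer obstruction to lifting the projective bundle to an actual vector bundle $\cE$ on the general HK variety vanishes. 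The cohomology vanishing is preserved under deformation by semicontinuity and the equality $\chi(\cE,\cE)=\chi(\cO_X)$.

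The hard part will be slope-stability. It has to be established first on the Hilbert-scheme locus for a polarization $h$ that is in general not of the form coming from $D$ alone but involves the exceptional class; Bogomolov's inequality applied to $\Delta(\cE_0)=\frac{r_0^{2n-2}(r_0^2-1)}{12}c_2(X)$ makes $\cE_0$ a borderline case, so slope-stability is delicate and reduces to excluding the existence of destabilizing subsheaves whose Mukai-type Chern classes satisfy a finite list of numerical constraints incompatible with (\ref{adancona})--(\ref{cantarini}). Once stability is secured on this locus, openness of slope-stability in families together with the deformation of $\PP(\cE_0)$ propagates stability to the generic $[(X,h)]\in\cK^{il}_e(2n)^{\rm good}$. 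Uniqueness then follows formally: for any other slope-stable $\cE'$ with the same Chern classes, $\chi(\cE,\cE')=\chi(\cO_X)>0$ combined with Serre duality forces a non-zero homomorphism $\cE\to\cE'$ (or $\cE'\to\cE$), which must be an isomorphism by equality of slopes and stability.
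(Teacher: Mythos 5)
Your construction of $\cE_0=\cF[n]^{+}$, the computation of its invariants, the BKR/McKay rigidity, and the definition of the good component as the one containing the Hilbert-scheme locus all match the paper. But there are two genuine gaps. The fatal one is uniqueness: the argument ``$\chi(\cE,\cE')=\chi(\cO_X)>0$ plus Serre duality forces a nonzero map $\cE\to\cE'$ or $\cE'\to\cE$'' only works on a surface, where $\chi=\hom-\mathrm{ext}^1+\mathrm{ext}^2$ and $\Ext^2(\cE,\cE')\cong\Hom(\cE',\cE)^{\vee}$. On a $2n$-dimensional variety the intermediate even groups $\Ext^2,\Ext^4,\dots$ can carry all of the positive Euler characteristic, so positivity of $\chi$ forces neither $\Hom(\cE,\cE')\neq 0$ nor $\Ext^{2n}(\cE,\cE')\neq 0$, and no isomorphism is produced. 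The paper's uniqueness is much more involved: for $(X,h)$ general in a Noether--Lefschetz divisor $\NL(d_0)$ with $d_0$ large and coprime to $r_0$, the polarization is $a(\cE)$-suitable for the Lagrangian fibration $X\to\PP^n$, so any stable $\cE$ with the invariants \eqref{chernvbe} restricts to a slope-semistable, hence (by the coprimality argument of Corollary~\ref{crl:caravilla}) slope-stable, hence simple semi-homogeneous bundle on a general abelian fiber; by Mukai's theory the two restrictions differ by a twist lying in a monodromy-invariant subset of $X_t[r_0^n]$ of cardinality $r_0^{2n}$, and the monodromy argument (Corollary~\ref{crl:modinv}, via Markman's Tate--Shafarevich description of $X\to\PP^n$ as a twist of $\cJ(S)\to|D|$) forces this set to be $X_t[r_0]$, so it contains $0$; the fiberwise isomorphisms are then glued over an open set with codimension-two complement and extended, and finally a constructibility argument over the dense union of the $\NL(d)$ gives uniqueness at the general point of the component. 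None of this is replaceable by the Euler-characteristic trick.

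The second gap is stability. You propose to prove slope-stability of $\cE_0$ directly on $S^{[n]}$ by ``excluding destabilizing subsheaves satisfying a finite list of numerical constraints,'' but in this borderline modular situation no such finite numerical exclusion is available, and in fact the paper never proves stability on the Hilbert scheme with the class $h_0$ (which need not even be ample there). The actual mechanism is the Lagrangian one sketched in Section~\ref{sec:ideebase}: one takes $S$ elliptic with the lattice of Claim~\ref{clm:eccoell}, chooses $\cF$ spherical with slope-stable restriction to every elliptic fiber, shows that $\cF[n]^{+}$ restricts to a slope-stable bundle on the general fiber of $S^{[n]}\to\PP^n$ (a product of elliptic curves, via the box-product stability Lemma~\ref{lmm:prodstab}), deforms $(S^{[n]},\det\cF[n]^{+})$ along $\NL(d_0)$ to where $h$ is ample and $a(\cE)$-suitable, and only then deduces slope-stability of the deformed bundle from stability of its restriction to Lagrangian fibers; openness of stability then propagates existence to the general point of $\cK^{il}_e(2n)^{\rm good}$. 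Without the elliptic/Lagrangian input and the suitability estimates (the inequalities \eqref{chiarello} and \eqref{psicologa}), your stability step has no proof, and the subsequent propagation has nothing to propagate.
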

\subsection{Comments}
\setcounter{equation}{0}
\subsubsection{Special cases}
Let $g=l=1$ in Theorem~\ref{thm:unicita}. Then the hypotheses reduce to the following congruences:
\begin{equation}\label{econ}
e\equiv
\begin{cases}
4(n-1)r_0-2n-6 \pmod{8r_0} & \text{if $r_0\equiv 0 \pmod{4}$,} \\
\frac{1}{2}((n-1)r_0-n-3) \pmod{2r_0} & \text{if $r_0\equiv 1 \pmod{4}$,} \\
-2n-6 \pmod{8r_0}  & \text{if $r_0\equiv 2 \pmod{4}$,} \\
-\frac{1}{2}((n-1)r_0+n+3) \pmod{2r_0}  & \text{if $r_0\equiv 3 \pmod{4}$.}
\end{cases}
\end{equation}
In particular  for  $n=2$ (and $g=l=1$) Theorem~\ref{thm:unicita} reduces to  the main result  in~\cite{ogfascimod}. 
Note also that in this case $\cK^{il}_e(2n)$ is irreducible by~\cite[Theorem~3.5]{debarre-surveyhk}, and hence 
$\cK^{il}_e(2n)^{\rm good}=\cK^{il}_e(2n)$.

\subsubsection{Rank and Chern classes}
The choice of rank and first two Chern classes in Theorem~\ref{thm:unicita} is  not as special  as one would think. 
Let us first consider a  rigid stable vector bundle $\cE$ on a polarized $K3$ surface $(S,h)$ of rank $r$ with $c_1(\cE)=ah$. Since $\chi(S,End\,\cE)=2$,  we have
\begin{equation}\label{disckappa}
2r c_2(\cE)-(r-1) a^2 h^2=\Delta(\cE)=2(r^2-1).
\end{equation}
It follows that $\gcd\{r,a\}=1$. The following result is a (weak) extension  to higher dimensions.
\begin{prp}\label{prp:rangociuno}
Let $(X,h)$ be a polarized HK variety of type $K3^{[n]}$, and let $\cF$  be a slope stable vector bundle on $X$. Suppose  that $c_1(\cF)=ah$ and that the natural morphism $\Def(X,\cF)\to\Def(X,h)$ is surjective. Then
\begin{equation}\label{thatsamore}
r(\cF)=r_0^n m,\quad a\cdot\divisore(h)=r_0^{n-1} m b_0',
\end{equation}
where $r_0,m,b_0'$ are integers, and $\gcd\{r_0,b_0'\}=1$.
\end{prp}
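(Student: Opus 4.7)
The approach is to use the surjectivity of $\Def(X,\cF)\to\Def(X,h)$ to deform $(X,\cF)$ to a very general polarized HK variety with N\'eron--Severi group of rank one, and there to convert the projectively hyperholomorphic character of $\cF$ into the prescribed divisibility between $r(\cF)$ and $a\cdot\divisore(h)$.

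By the surjectivity hypothesis $(X,\cF)$ deforms over the entire $20$-dimensional polarized base $\Def(X,h)$; at a very general parameter $t$ one has $\NS(X_t)=\ZZ h_t$, so slope-stability forces $c_1(\cF_t)=a h_t$ with the same integer $a$, and the divisibility of $c_1(\cF_t)$ in $H^2(X_t,\ZZ)$ equals $a\cdot d$ with $d:=\divisore(h)$. Writing $\frac{ad}{r(\cF)}=\frac{b_0'}{r_0}$ in lowest terms, the coprimality $\gcd(r_0,b_0')=1$ is automatic, and the proposition is equivalent to the single divisibility statement $r_0^n\mid r(\cF)$: once this is granted, setting $m:=r(\cF)/r_0^n$ yields the decomposition in~\eqref{thatsamore}.

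To establish $r_0^n\mid r(\cF)$ one invokes the theory of modular (atomic) sheaves on holomorphic symplectic varieties of $K3^{[n]}$-type developed by Markman and Beckmann: the projectively hyperholomorphic property of $\cF$ places $\cF_t$ in the atomic class and endows it with an integral extended Mukai vector $\wt v(\cF_t)$ in Markman's extended Mukai lattice $\wt H(X_t,\ZZ)$, of the form $\wt v(\cF_t)=(\wt r,\wt c_1,\wt s)$ with $\wt r\in\ZZ$, $\wt c_1\in H^2(X_t,\ZZ)$, $\wt s\in\ZZ$. The numerical dictionary relating $\wt v(\cF_t)$ to the ordinary Chern character on a $K3^{[n]}$-type variety forces the $n$-th power relation $r(\cF_t)=\wt r^{\,n}\mu$ between the extended and ordinary ranks (for an integer multiplicity $\mu$), with $c_1(\cF_t)$ scaling as $\wt r^{\,n-1}\mu\,\wt c_1$; reading off $r_0$ as the primitive factor of $\wt r$ in the $(\wt r,\wt c_1\cdot d)$-projection of $\wt v(\cF_t)$ then yields both $r_0^n\mid r(\cF)$ and $\gcd(r_0,b_0')=1$. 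The main obstacle is precisely this numerical dictionary --- namely the fact that the usual rank is the $n$-th power of the extended rank up to multiplicity --- which must be extracted from the explicit form of Markman's extended Mukai lattice; a hands-on alternative, bypassing the atomic-sheaf machinery, would be to iterate Hirzebruch--Riemann--Roch applied to $\cF_t\otimes h_t^{\otimes k}$ on the Picard-rank-one $(X_t,h_t)$, using Fujiki's formula for the integrals of $h_t^{2n}$ and $h_t^{2n-2}\cdot c_2(X_t)$, to extract enough integrality congruences on $(r,a,d)$ to force $r_0^n\mid r(\cF)$ directly.
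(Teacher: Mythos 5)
Your reduction of the statement to the single divisibility $r_0^n \mid r(\cF)$ (with $r_0$ the denominator of $a\cdot\divisore(h)/r(\cF)$ in lowest terms) is fine, but the step you propose to prove it does not hold up. First, the hypothesis is only that $\Def(X,\cF)\to\Def(X,h)$ is surjective; this does not make $\cF$ atomic, nor even projectively hyperholomorphic in the full sense (atomic $\Rightarrow$ projectively hyperholomorphic, not conversely, and deforming over the $20$-dimensional polarized family is weaker still). Second, and more seriously, the ``numerical dictionary'' you invoke --- that an atomic sheaf on a $K3^{[n]}$-type variety has $r(\cF)=\wt{r}^{\,n}\mu$ and $c_1(\cF)$ scaling as $\wt{r}^{\,n-1}\mu\,\wt{c}_1$ --- is not a theorem and is not how the extended Mukai vector works: in the Markman--Beckmann formalism the first entry of $\wt{v}(\cF)$ (suitably normalized) is the rank $r(\cF)$ itself, so no $n$-th power divisibility of the rank can be read off from the extended Mukai lattice; you are in effect assuming the conclusion. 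The fallback of iterating Hirzebruch--Riemann--Roch for $\cF_t\otimes h_t^{\otimes k}$ on a Picard-rank-one deformation only produces congruences and has no mechanism to force the coupled divisibilities $r_0^n\mid r(\cF)$ and $r_0^{n-1}m\mid a\cdot\divisore(h)$.

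The $n$-th power phenomenon is genuinely geometric, and this is where the surjectivity hypothesis is actually used in the paper: one deforms $(X,\cF)$ \emph{inside the polarized family} to a pair $(Y,h)$ carrying a Lagrangian fibration $\pi\colon Y\to\PP^n$ with $q_Y(h,f)=d_0\cdot\divisore(h)$ for $d_0\gg 0$ chosen coprime to $r(\cF)$. On a smooth Lagrangian fiber $Y_t$, which is a principally polarized abelian $n$-fold with $h_{|Y_t}=d_0\cdot\divisore(h)\,\theta_t$, the restriction of $\cF$ is slope semistable (suitability of $h$) and satisfies $\Delta\cdot\theta_t^{n-2}=0$; Mukai's theory of simple semi-homogeneous bundles (through Proposition~\ref{prp:numeretti}, which also needs Simpson's local-freeness of Jordan--H\"older factors) then gives $r(\cF)=r_0^n m$ and $a\,d_0\,\divisore(h)=r_0^{n-1}b_0 m$ with $\gcd\{r_0,b_0\}=1$, and coprimality of $d_0$ with $r(\cF)$ lets one divide out $d_0$ to get~\eqref{thatsamore}. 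In short: it is the $n$-dimensionality of the abelian fiber, not any lattice-theoretic property of the extended Mukai vector, that produces the exponent $n$, and your proposal is missing exactly this input.
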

Note that in  Proposition~\ref{prp:rangociuno} we do not assume that $\cF$  is rigid. There are examples of slope stable projectively hyperholomorphic  vector bundles  for which $m>1$ which are not rigid
see~\cite{markmod,bottini-og10} and~\cite{fatighentipotenza}. The tangent vector bundle of a HK 
manifold of type $K3^{[2]}$  is an example of rigid slope stable projectively hyperholomorphic  vector bundle with $m>1$ (in fact $m=4$), see~\cite{gavran-tan-rigid}. 
There are no other examples of the latter type that I am aware of.

The stable vector bundles in Theorem~\ref{thm:unicita}, together with those obtained by tensoring with powers of the polarization, cover many of the choices of rank and first Chern class with  $m=1$ which are a priori possible according to Proposition~\ref{prp:rangociuno}. 

Regarding the discriminant  of the vector bundle(s) $\cE$  in Theorem~\ref{thm:unicita} we note the following. First, the formula for the discriminant
 in~\eqref{chernvbe} for $n=1$ is exactly the formula in~\eqref{disckappa}. Next, since $-\Delta(\cE)$ is the second Chern class of the push-forward to $X$ of the relative tangent bundle of $\PP(\cE)\to X$, and 
 $\PP(\cE)$ extends to all small deformations of $X$ (because $H^2(X,End^0(\cE))=0$), the cohomology class
  $\Delta(\cE)$ remains of type $(2,2)$ for all deformations of $X$. It follows that
 $\Delta(\cE)$ is a  linear combination of $c_2(X)$ and $q_X^{\vee}$, see  the main result in~\cite{zhang-char-form}. 
 If $n\in\{2,3\}$ then $c_2(X)$ and $q_X^{\vee}$ are linearly dependent, and hence it follows (without doing any computation) that $\Delta(\cE)$ is  a multiple of $c_2(X)$.  If $n>3$ then $c_2(X)$ and $q_X^{\vee}$ are linearly independent, hence there is no  \lq\lq a priori\rq\rq\  reason why  $\Delta(\cE)$ should be   a multiple of $c_2(X)$. In fact I know of no examples of stable projectively hyperholomorphic  vector bundles on HK varieties of type $K3^{[n]}$ whose discriminant is not a multiple of the second Chern class.

The vector bundles $\cE$  in Theorem~\ref{thm:unicita} are atomic, in fact they are in the $\cO_X$-orbit (see~\cite{markmod}) and  hence the Beckmann-Markman extended Mukai vector 
$\wt{v}(\cE)$ (see~\cite[Theorem 6.13]{markmod}, \cite[Definition 4.16]{beckmann-ext-mukai},  
\cite[Definition 1.1]{beckmann-atomic}) is determined uniquely (if we require that its first entry equals $r(\cE)$), in particular $\wt{q}(\wt{v}(\cE))=-(n+3)r_0^{2n-2}/2$ 
by~\cite[Lemma~4.8]{beckmann-ext-mukai}. By the formula relating the projection of the discriminant 
$\Delta(\cE)$ on the Verbitsky subalgebra and the square 
$\wt{q}(\wt{v}(\cE))$ in~\cite[Proposition~3.11]{bottini-og10}, we get that if $\Delta(\cE)$ is  a multiple of $c_2(X)$,  then it is given by the formula in~\eqref{chernvbe}.

The natural question to ask is the following: are we close to having listed \emph{all slope stable rigid vector bundles on a  polarized HK variety of type $K3^{[n]}$ with $20$ moduli}?

\subsubsection{Projective bundles}
Let $X$ and $\cE$ be as in Theorem~\ref{thm:unicita}. The projectivization $\PP(\cE)$  extends (uniquely) to a projective bundle on all (small) deformations of $X$ because $H^2(X,End^0(\cE))=0$. Actually 
Markman~\cite[Theorem 1.4]{markmod} shows that $\PP(\cE)$  extends  to a projective bundle on all 
deformations of $X$ (it is projectively hyperholomorphic). In fact the (possibly twisted) locally free sheaf $E$ on $S^{[n]}$ appearing in Markman loc.~cit.~is obtained by deforming the vector bundle 
$\cF[n]^{+}$ associated to a spherical vector bundle $\cF$ on $S$, see Definition~3.1 (or Definition~5.1 in~\cite{ogfascimod}), and likewise the vector bundles in Theorem~1.1 are obtained by deforming 
$\cF[n]^{+}$. 
Theorem~\ref{thm:unicita} should provide a uniqueness result for stable projective bundles 
$\mathbf P$ of dimension $(r_0^n-1)$ with characteristic class given by the third equality in~\eqref{chernvbe} (i.e.~$-c_2(\Theta_{{\mathbf P}/X})$, where $\Theta_{{\mathbf P}/X}$ is the relative tangent bundle of ${\mathbf P}\to X$)
on a  general HK manifold of type $K3^{[n]}$. In order to turn this into a precise statement one would need to specify with respect to which K\"ahler classes the projective bundle is supposed to be stable. 
The zoo of conditions in Theorem~\ref{thm:unicita} would then correspond to the cases in which the projective bundle is the projectivization of a vector bundle, i.e.~to the vanishing of the relevant Brauer class. 

\subsubsection{Franchetta property}
Let $\cU^{il}_e(2n)\subset \cK^{il}_e(2n)$ be an open non empty subset with the property that there exists one and only one stable vector bundle $\cE$ 
on $[(X,h)]\in \cU^{il}_e(2n)$ such that the equations in~\eqref{chernvbe} hold, and let $\cX\to \cU^{il}_e(2n)$ be the tautological family of HK (polarized) varieties (we might need to pass to the moduli stack). By Theorem A.5 in~\cite{mukvb}, there exists a quasi tautological vector bundle $\mathsf E$ on $\cX$, i.e.~a vector bundle whose restriction to a fiber $(X,h)$ of $\cX\to \cU^{il}_e(2n)$ is isomorphic to $\cE^{\oplus d}$ for some $d>0$, where $\cE$ is the vector bundle of Theorem~\ref{thm:unicita}. If $[(X,h)]\in \cU^{il}_e(2n)$ the Generalized Franchetta conjecture, see~\cite{fulatvial:genfranchetta1}, predicts that the restriction to $\CH^2(X)_{\QQ}$  of  
$\ch_2({\mathsf E})\in\CH(\cX)_{\QQ}$ is equal to  $-d\frac{r_0^{2n-2}(r_0^2-1)}{12}c_2(X)$. In other words it predicts that the third equality in~\eqref{chernvbe} holds at the level of (rational) Chow groups.  In general it is not easy to give a rationally defined algebraic cycle class on a non empty open subset of the moduli stack of polarized HK varieties. Theorem~\ref{thm:unicita} produces such a cycle, and hence it provides
 a good test for the generalized 
Franchetta conjecture.

\subsection{Basic ideas}\label{sec:ideebase}
\setcounter{equation}{0}
The key elements in the proof of the main result are the following. First there is the extension to modular sheaves (defined in~\cite{ogfascimod}) on higher dimensional HK manifolds of the decomposition of the (real) ample cone of a smooth projective surface into open chambers for which slope stability of sheaves with fixed numerical characters does not change, see~\cite{ogfascimod}. 

The second element is the behaviour of modular vector bundles on a Lagrangian HK manifold. If the polarization is very close to the pull-back of an ample line bundle from the base, then the restriction of a slope stable vector bundle  to a general Lagrangian fiber is slope semistable, and if it is slope stable then it is a semi-homogeneous vector bundle, in particular it has no non trivial infinitesimal deformations keeping the determinant fixed. In the reverse direction, if the restriction of a  vector bundle  to a general Lagrangian fiber is slope stable, then the vector bundle is slope stable (provided the polarization is very close to the pull-back of an ample line bundle from the base).

The key element in the proof of existence is a construction discussed in~\cite{ogfascimod} (and in~\cite{markmod}) which associates to a vector bundle 
$\cF$ on a $K3$ surface $S$ a sheaf $\cF[n]^{+}$ on $S^{[n]}$.   The sheaf $\cF[n]^{+}$ is locally free by Haiman's highly non trivial results in~\cite{haiman}. If $\cF$ is a spherical vector bundle then $\End^0(\cF[n]^{+})$  has no non zero cohomology by Bridgeland-King-Reid's derived version of the McKay correspondence. This gives that $\cF[n]^{+}$ extends to all (small) deformations of $(S^{[n]},\det\cF[n]^{+})$, and that the projectivization 
$\PP(\cF[n]^{+})$ extends to all (small) deformations of $S^{[n]}$ (the last result follows from a classical result of Horikawa). We prove slope stability of 
 $\cF[n]^{+}$  in the case of an elliptic $K3$ surface $S$, by using our results on vector bundles on Lagrangian HK manifolds. In fact if $S$ is an elliptic $K3$ surface, then there is a Lagrangian fibration 
 $S^{[n]}\to(\PP^1)^{(n)}\cong\PP^n$, whose general fiber is the product of $n$ fibers of the elliptic fibration. If $\cF$ is a slope stable rigid vector bundle on $S$, then the restriction to an elliptic fiber is slope stable. It follows that the restriction of $\cF[n]^{+}$ to a general  fiber of the Lagrangian fibration 
 $S^{[n]}\to\PP^n$ is slope stable.
  From this one gets that the (unique) extension of  $\cF[n]^{+}$  to a general Lagrangian deformation of $(S^{[n]},\det\cF[n]^{+})$ is slope stable with respect to  $\det\cF[n]^{+}$ (provided we move in a Noether-Lefschetz locus with  high enough  discriminant).
 
Uniqueness of a general slope stable vector bundle with the given numerical invariants is obtained by proving uniqueness for  vector bundles on (polarized) HK varieties with Lagrangian fibrations (with  discriminant  high enough and almost coprime to the rank). The main points in the proof of the latter result are the following.
Let $\cF$ be a spherical vector bundle on an elliptic $K3$ surface $S$: the vector bundle $\cE_X$ on a (small) Lagrangian deformation $X$ of $S^{[n]}$ obtained by extension of $\cF[n]^{+}$ restricts to slope stable semi-homogeneous vector bundles on Lagrangian fibers parametrized by a large open subset of the base  (the complement has codimension at least $2$). Any slope stable vector bundle $\cE$ on $X$ with the same rank, $c_1$ and $c_2$ as $\cE_X$ restricts 
to a slope stable semi-homogeneous vector bundle on a general Lagrangian fiber. Any two simple semi-homogeneous vector bundles on an abelian variety   with the same rank and determinant are obtained one from the other via tensorization with a (torsion) line bundle. This, together with a monodromy argument, gives that $\cE_X$ and $\cE$ restrict to isomorphic vector bundles on a general Lagrangian fiber. Since the set of Lagrangian fibers for which 
the restriction of $\cE_X$ is slope stable has complement of codimension at least $2$,  one concludes that $\cE_X$ and 
$\cE$ are isomorphic.

\subsection{Outline of the paper}
\setcounter{equation}{0}

Sections~\ref{sec:isospettro} and~\ref{sec:fibratifond} are devoted to the computation of the discriminant of the vector bundle $\cF[n]^{+}$ on $S^{[n]}$, provided $\cF$ is a spherical vector bundle on the $K3$ surface $S$. Since $\PP(\cF[n]^{+})$ extends to all (small) deformations of $S^{[n]}$, one knows a priori that  the discriminant is a linear combination of 
$c_2(S^{[n]})$ and the inverse  $q^{\vee}_{S^{[n]}}$ of the  BBF quadratic form. From this it follows that one can work on the open subset of $S^{[n]}$ parametrizing subschemes whose support has cardinality at least $n-1$, and then a straightforward computation gives that the discriminant is as in~\eqref{chernvbe}. 

In Section~\ref{sec:hilbdielle} we show that by starting from  slope stable spherical vector bundles $\cF$ on an elliptic surface $S\to\PP^1$ we can produce vector bundles $\cF[n]^{+}$ on $S^{[n]}$ with rank and first two Chern classes covering all the cases in Theorem~\ref{thm:unicita}. Moreover we study the restriction of such an $\cF[n]^{+}$ to  fibers of the Lagrangian fibration $S^{[n]}\to(\PP^1)^{(n)}\cong \PP^n$.

Section~\ref{sec:spagnesi}  is the most demanding part of the paper. The key ideas, outlined in Section~\ref{sec:ideebase}, are combined together in order  to give the proof of Theorem~\ref{thm:unicita} (and of Proposition~\ref{prp:rangociuno}).
\subsection{Acknowledgments}
\setcounter{equation}{0}
Thanks to Marco Manetti for explaining Horikawa's classical result~\cite[Theorem 6.1]{horikawa-II}, and to Emanuele Macr\`i for helping me out by introducing me to the powerful~\cite[Theorem~2]{simpshiggs}.
\section{The isospectral Hilbert scheme}\label{sec:isospettro}
\subsection{Summary of results}\label{subsec:finardi}
\setcounter{equation}{0}
We start by introducing  notation  and recalling known  results. 
Let $S$ be a $K3$ surface. The isospectral Hilbert scheme of $n$ points on $S$, denoted by $X_n=X_n(S)$, was introduced and studied by Haiman,  
 see Definition 3.2.4 in~\cite{haiman}. We have a commutative diagram
\begin{equation}\label{commiso}
\xymatrix{ X_n(S)\ar[d]_{\rho}\ar[rr]^{\tau}    &  &  S^n \ar[d]^{\pi}\\ 
  S^{[n]}  \ar[rr]^{\gamma} & & S^{(n)} }
\end{equation}
In fact $X_n(S)$ is the reduced scheme associated to the fiber product of $S^n$ and $S^{[n]}$ over $S^{(n)}$. 
Moreover the map $\tau$ is identified with the blow up of $S^n$ with center the big diagonal, see Corollary 3.8.3 in~\cite{haiman}. Let $\pr_i\colon S^n\to S$ be the $i$-th projection, and 
let $\tau_i\colon X_n(S)\to S$ be the composition $\tau_i:=\pr_i\circ \tau$. Let
\begin{equation}
(S^n)_{*}:=\{x=(x_1,\ldots,x_n)\in S^n \mid \text{at most two entries of $x$ are equal}\},
\end{equation}
and let $X_n(S)_{*}:=\tau^{-1}((S^n)_{*})$. Let 
$E_n\subset X_n(S)_{*}$ be the  exceptional 
divisor of $X_n(S)_{*}\to (S^n)_{*}$.
Then  $E_n$ is smooth because the restriction of the big diagonal to $(S^n)_{*}$ is smooth. We let
\begin{equation}
e_n:=\cl(E_n)\in H^2(X_n(S)_{*};\QQ).
\end{equation}
Let $\eta\in H^4(S;\QQ)$ be the fundamental class. 

If $X$ is a HK manifold, the non degenerate BBF symmetric blinear form $H^2(X)\times H^2(X)\to\CC$ defines a symmetric bilinear form 
$H^2(X)^{\vee}\times H^2(X)^{\vee}\to\CC$, i.e.~a symmetric element of $H^2(X)\otimes H^2(X)$, whose image in $H^4(X)$ via the cup product map is a rational Hodge class $q^{\vee}_X$.

Below are the results obtained in the present section. 
\begin{prp}\label{prp:ixnessen}
Let $n\ge 2$. We have the following equalities in the rational cohomology of $X_n(S)_{*}$:
\begin{eqnarray}
\rho^{*}\left(\ch_2(S^{[n]}\right)_{|X_n(S)_{*}} & = &-24\sum_{l=1}^n\tau_l^{*}(\eta)_{|X_n(S)_{*}}+3 e_n^2, \label{pullchernchar} \\
\rho^{*}(q^{\vee})_{|X_n(S)_{*}} & = & -(2n-24)\sum_{l=1}^n \tau_l^{*}(\eta)-\frac{4n-3}{2n-2}e_n^2. \label{pullcasimir}
\end{eqnarray}
\end{prp}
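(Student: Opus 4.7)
The plan is to compute both classes on the open $U := X_n(S)_{*}$ by direct pullback. On $U$ the big diagonal $\Delta = \bigsqcup_{i<j}\Delta_{ij}\subset (S^n)_*$ is smooth of codimension two, hence $\tau\colon U\to (S^n)_*$ is the blow-up of $\Delta$, and $\rho\colon U\to S^{[n]}_*$ is the $\mathfrak{S}_n$-quotient. At a generic point of $E_n^{(ij)}$ the stabilizer $\langle(ij)\rangle$ acts as a complex reflection, so $\rho$ is simply ramified along $E_n$. A key input used throughout is $c_1(N_{\Delta_{ij}/(S^n)_*}) = c_1(T_S)|_{\Delta_{ij}} = 0$, from $c_1(T_S) = 0$ on the K3.

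For~\eqref{pullcasimir} I would argue at the level of $H^2\otimes H^2$. Using the orthogonal decomposition $H^2(S^{[n]}) = H^2(S)\oplus\ZZ\delta$ with $q_{S^{[n]}}(\delta) = -2(n-1)$,
\[
q^{\vee}_{S^{[n]}} = q^{\vee}_S - \frac{1}{2(n-1)}\delta\otimes\delta,
\]
which after cup product becomes a class in $H^4(S^{[n]})$. The standard identifications $\rho^*\delta = e_n$ (using $[F] = 2\delta$ and simple ramification $\rho^*F = 2E_n$) and $\rho^*\alpha^{[n]} = \sum_l \tau_l^*\alpha$ for $\alpha\in H^2(S)$ let me expand $\rho^*q^{\vee}_{S^{[n]}}$ as $\sum_{l,m}\sum_{ij}g^{ij}\tau_l^*e_i\cup\tau_m^*e_j - \frac{1}{2(n-1)}e_n^2$. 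The cross terms $l\ne m$ are $\tau^*\pi_{(l,m)}^*q^{\vee}_S$; the K\"unneth decomposition $[\Delta_S] = 1\otimes\eta + \eta\otimes 1 + q^{\vee}_S$ in $H^4(S\times S)$ (only these types appear, since $H^1(S) = 0$) gives $\pi_{(l,m)}^*q^{\vee}_S = [\Delta_{lm}] - \pr_l^*\eta - \pr_m^*\eta$. The excess-intersection formula for blow-ups of smooth codim-two centers with $c_1(N) = 0$ then yields $\tau^*[\Delta_{lm}] = -(e_n^{(lm)})^2$. Summing over the $n(n-1)$ ordered pairs $l\ne m$ produces $-2e_n^2 - 2(n-1)\sum_l\tau_l^*\eta$, while the diagonal terms ($l = m$) contribute $22\sum_l\tau_l^*\eta$ (since $q^{\vee}_S = 22\eta$); collecting with the $\delta$-piece gives~\eqref{pullcasimir}.

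For~\eqref{pullchernchar} I would combine a ramification short exact sequence with the blow-up formula. The differential $d\rho$ fits into
\[
0\to T_U\xrightarrow{d\rho}\rho^*T_{S^{[n]}_*}\to \iota_{*}\cL\to 0,
\]
where $\iota\colon E_n\hookrightarrow U$ and $\cL$ is a \emph{line bundle} on $E_n$: explicitly $\cL = (\rho|_{E_n})^*N_{F/S^{[n]}_*}$, and since $\rho^*\cO(F) = \cO(2E_n)$ it equals $\cO_{E_n}(2E_n|_{E_n})$, so $c_1(\cL) = 2e$ where $e := e_n|_{E_n}$. Grothendieck--Riemann--Roch for $\iota$ (with $N_{E_n/U}$ the line bundle of $c_1 = e$) yields $\ch_2(\iota_*\cL) = \tfrac{3}{2}e_n^2$. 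Separately, the blow-up formula for $\tau$ (codim-two smooth center, $c_1(N_\Delta) = 0$) gives $\ch_2(T_U) = \tau^*\ch_2(T_{(S^n)_*}) + \tfrac{3}{2}e_n^2$. Adding these and using $\tau^*\ch_2(T_{(S^n)_*}) = -24\sum_l\tau_l^*\eta$ (from $c_2(T_S) = 24\eta$, $c_1(T_S) = 0$) delivers~\eqref{pullchernchar}.

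The subtle step will be the identification of the line bundle $\cL$ in the ramification sequence: the na\"ive local model $(x,w)\mapsto(x^2,w)$ suggests $\cL = \cO_{E_n}$, which would give only $e_n^2$ in~\eqref{pullchernchar}. The global twist by $N_{F/S^{[n]}_*}$, reflecting $[F] = 2\delta$, contributes an extra $+e_n^2$ via GRR and triples the coefficient from $1$ to $3$. As a sanity check, $c_1(\rho^*T_{S^{[n]}_*}) = c_1(T_U) + c_1(\iota_*\cL) = -e_n + e_n = 0$, in agreement with $c_1(S^{[n]}) = 0$.
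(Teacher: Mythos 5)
Your proposal is correct and follows essentially the same route as the paper: for~\eqref{pullchernchar} it combines the blow-up Chern character formula for $\tau$ with the ramification sequence of $\rho$ (you phrase it with tangent bundles and the cokernel $\cO_{E_n}(2E_n)$, the paper with cotangent bundles and cokernel $\cO_{E_n}(-E_n)$ — these are dual to each other and give the same two $\tfrac{3}{2}e_n^2$ corrections), and for~\eqref{pullcasimir} it uses $\rho^{*}\mu(\alpha)=\sum_l\tau_l^{*}\alpha$, $\rho^{*}\delta=e_n$, the K\"unneth class of the diagonal, and the excess formula $\tau^{*}[\Delta_{lm}]=-(e_n^{(lm)})^2$, exactly as in the paper. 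All coefficients check out, so there is nothing to add.
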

  Before stating the next result we note that while $q^{\vee}_X$ is a rational cohomology class,   $(2n-2)q^{\vee}_X$ lifts 
 to an integral class (uniquely because the group $H^*(S^{[n]};\ZZ)$ is torsion free by the main result 
in~\cite{markman-poisson}).
\begin{prp}\label{prp:inietta}
Let $n\ge 4$, and let $T_n\subset H^4(S^{[n]};\ZZ)$ be the saturation of the subgroup spanned by $c_2(S^{[n]})$ and $(2n-2)q_X^{\vee}$.  The map
\begin{equation}\label{mappatienne}
\begin{matrix}
T_n & \lra & H^4(X_n(S)_{*};\ZZ) \\
a & \mapsto & \rho^{*}(a)_{|X_n(S)_{*}}
\end{matrix}
\end{equation}
is injective.
\end{prp}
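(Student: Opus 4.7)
The plan is to reduce integral injectivity to the $\QQ$-linear independence of two explicit classes in $H^{4}(X_n(S)_{*};\QQ)$, then to use Proposition~\ref{prp:ixnessen} in order to rewrite both pullbacks inside a convenient two-dimensional subspace, and finally to verify linear independence by applying the proper pushforward $\tau_{*}$ and performing a K\"unneth computation on $(S^{n})_{*}$.

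First, since $H^{*}(S^{[n]};\ZZ)$ is torsion-free and $T_n$ is saturated, $T_n$ is torsion-free with $T_n\otimes\QQ=\QQ\cdot c_2(S^{[n]})+\QQ\cdot q_X^{\vee}$; it is therefore enough to prove that $\rho^{*}(c_2(S^{[n]}))_{|X_n(S)_{*}}$ and $\rho^{*}(q_X^{\vee})_{|X_n(S)_{*}}$ are $\QQ$-linearly independent. Since $c_1(S^{[n]})=0$ one has $c_2(S^{[n]})=-\ch_2(S^{[n]})$, so by Proposition~\ref{prp:ixnessen} both pullbacks lie in the $\QQ$-span of
\[
A:=\sum_{l=1}^{n}\tau_l^{*}(\eta)_{|X_n(S)_{*}},\qquad B:=e_n^{\,2}.
\]
A direct substitution produces a $2\times 2$ coefficient matrix whose determinant is a nonzero rational multiple of $(n-2)(n-3)$, hence nonzero for $n\ge 4$. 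This reduces the task to showing that $A$ and $B$ themselves are $\QQ$-linearly independent.

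For this I would apply the proper pushforward $\tau_{*}\colon H^{4}(X_n(S)_{*};\QQ)\to H^{4}((S^{n})_{*};\QQ)$. Since $A=\tau^{*}(\sum_l\pr_l^{*}\eta)$, the projection formula gives $\tau_{*}A=\sum_l\pr_l^{*}\eta$. Because $\tau$ is the blow-up of the smooth center $\Delta_{*}$ (the big diagonal restricted to $(S^{n})_{*}$, smooth by the very definition of $(S^{n})_{*}$), the standard self-intersection identity for a codimension-$2$ smooth blow-up gives $\tau_{*}(e_n^{\,2})=-[\Delta_{*}]$. It then suffices to prove that $\sum_l\pr_l^{*}\eta$ and $[\Delta_{*}]$ are $\QQ$-linearly independent in $H^{4}((S^{n})_{*};\QQ)$. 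The complement $S^{n}\setminus(S^{n})_{*}$ has complex codimension at least $4$, so restriction gives an isomorphism $H^{4}(S^{n};\QQ)\cong H^{4}((S^{n})_{*};\QQ)$ and I can K\"unneth-decompose on $S^{n}$: the class $\sum_l\pr_l^{*}\eta$ lies in the single-factor summand $\bigoplus_l H^{4}(S)_{l}$, whereas each irreducible component $[\Delta_{ij}]=\pr_{ij}^{*}[\Delta_S]$ carries a nonzero ``mixed'' contribution $\sum_a\pr_i^{*}(e_a)\cdot\pr_j^{*}(e_a^{*})$ arising from the $H^{2}(S)\otimes H^{2}(S)$ part of the K\"unneth decomposition of $[\Delta_S]$, which is orthogonal to the previous summand; this gives the claimed independence.

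The step I expect to need the most care is the identification $\tau_{*}(e_n^{\,2})=-[\Delta_{*}]$: one must use that $\Delta_{*}$ is genuinely smooth on $(S^{n})_{*}$ (so $E_n\to\Delta_{*}$ is a $\PP^{1}$-bundle and the standard blow-up formulas apply verbatim), and then pay attention to the sign coming from the usual convention that the normal bundle of the exceptional divisor is the tautological line bundle. Once this is in place, the determinant computation and the K\"unneth separation are both routine, and $\QQ$-linear independence on $T_n\otimes\QQ$ combined with torsion-freeness of $T_n$ immediately yields integral injectivity on $T_n$.
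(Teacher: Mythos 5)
Your proof is correct, and its overall skeleton coincides with the paper's: reduce integral injectivity on the saturated subgroup $T_n$ to $\QQ$-linear independence of the two pullbacks, substitute the expressions of Proposition~\ref{prp:ixnessen} (using $c_2(S^{[n]})=-\ch_2(S^{[n]})$), and observe that the resulting $2\times 2$ coefficient matrix has determinant a nonzero multiple of $(n-2)(n-3)$ (indeed $6(n-2)(n-3)/(n-1)$), so everything hinges on the linear independence of $\sum_l\tau_l^{*}(\eta)_{|X_n(S)_{*}}$ and $e_n^2$. Where you genuinely diverge is in the proof of that last independence (the paper's Lemma~\ref{lmm:dueinter}): the paper integrates the two classes over two explicit algebraic surfaces in $X_n(S)_{*}$ --- the surface $\Gamma$ of subschemes containing a fixed reduced length-$(n-1)$ scheme and a surface $\Omega$ built from two transverse curves $C_1,C_2$ on $S$ --- and checks that the $2\times 2$ Gram matrix is nonsingular; you instead push forward by the proper blow-down $\tau$, using $\tau_{*}\tau^{*}=\mathrm{id}$ and $\tau_{*}(e_n^2)=-\cl(D_n)_{|(S^n)_{*}}$ (consistent with the paper's relation $\tau^{*}\cl(D_n)_{|X_n(S)_{*}}=-e_n^2$, valid because the normal bundle of the diagonal components has trivial $c_1$), pass from $(S^n)_{*}$ to $S^n$ via the codimension-$4$ complement, and separate $\sum_l\pr_l^{*}\eta$ from $\cl(D_n)$ by its nonzero mixed $H^2\otimes H^2$ K\"unneth component (exactly the mixed term visible in the paper's formula~\eqref{newman}). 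Your route is more canonical: it needs no auxiliary cycles and no assumption that $S$ carries the two curves $C_1,C_2$ (which the paper arranges by a "we may assume" step), and it makes the torsion-freeness/saturation reduction explicit; the paper's cycle-theoretic computation, on the other hand, is more self-contained on $X_n(S)_{*}$ and also yields concrete test surfaces that are reused conceptually elsewhere. The only step you flagged as delicate, the sign in $\tau_{*}(e_n^2)=-[\Delta_{*}]$, is indeed the right thing to check and comes out as you state.
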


The proof of Propositions~\ref{prp:ixnessen} and~\ref{prp:inietta} are respectively in Subsections~\ref{subsec:berkeley} and~\ref{subsec:asiago}.

\subsection{Proof of Proposition~\ref{prp:ixnessen}}\label{subsec:berkeley}
\setcounter{equation}{0}
We start by recalling a couple of formulae. First suppose that $j\colon D\hra W$ is the embedding of a smooth divisor in a smooth ambient variety, and that $\cF$ is a sheaf on $D$. Then by Grothendieck-Riemann-Roch and the push-pull formula we have
\begin{equation}\label{chernpushforw}
\ch(j_{*}(\cF))=j_{*}(\ch(\cF))\cdot \Td(\cO_W(D))^{-1}=j_{*}(\ch(\cF))\cdot (1-\frac{\cl(D)}{2}+\frac{\cl(D)^2}{6}+\ldots).
\end{equation}
Next we recall how one computes the Chern classes of a blow up.  
Let $Z$ be a smooth variety, and let $Y\subset Z$ be a smooth subvariety of pure codimension $c$. Let $f\colon \wt{Z}\to Z$ be the blow of $Y$. Let $j\colon E\hra \wt{Z}$ be the inclusion of the exceptional divisor of $f$, and let $e\in H^2(\wt{Z};\QQ)$ be the class of $E$. If $\cN_{Y/Z}$ is the normal bundle of $Y$ in $Z$, then $E\cong\PP(\cN_{Y/Z})$, and the restriction of $\cO_{ \wt{Z}}(E)$ to $E$ is isomorphic to the tautological sub line bundle $\cO_E(-1)$. Let $\cQ$ be the quotient bundle on $E$, i.e.~the vector bundle fitting into the exact sequence
\begin{equation}
0\lra \cO_E(-1)\lra f_E^{*}\cN_{Y/Z}\lra \cQ \lra 0,
\end{equation}
where $f_E\colon E\to Y$ is the restriction of $f$. The differential of $f$ gives the exact sequence
\begin{equation}
0\lra \Theta_{\wt{Z}}\overset{df}{\lra} f^{*}(\Theta_Z) \lra j_{*}(\cQ) \lra 0.
\end{equation}
Taking Chern characters, and applying the formula in~\eqref{chernpushforw} to the inclusion $j\colon E\hra \wt{Z}$ and the sheaf $\cQ$ we get the formula
\begin{multline}\label{chernblow}
\ch(\wt{Z})=f^{*}(\ch(Z))-\ch( j_{*}(\cQ) )=f^{*}(\ch(Z))-j_{*}(\ch( (\cQ) )\cdot\left(1-\frac{e}{2}+\ldots\right)=\\
=f^{*}(\ch(Z))-j_{*}\left((c-1)+f_E^{*}(c_1(\cN_{Y/Z})-j^{*}(e)\right)\cdot\left(1-\frac{e}{2}+\ldots\right)\equiv\\
\equiv f^{*}(\ch(Z))-
j_{*}\left(f_E^{*}(c_1(\cN_{Y/Z})\right)-(c-1)e+\frac{c+1}{2}e^2\pmod{H^6(\wt{Z};\QQ)}
\end{multline}
\begin{proof}[Proof of the equality in~\eqref{pullchernchar}]
Since $X_n(S)_{*}$ is the blow up of $(S^n)_{*}$ with center the smooth locus of the big diagonal, we can relate the Chern characters of $X_n(S)_{*}$ and $(S^n)_{*}$  via the equality in~\eqref{chernblow}. Since $\ch_2(S^n)=-24\sum_{i=1}^n\tau_i^{*}(\eta)$, and the normal bundle of the big diagonal in $S^n$ has trivial first Chern class, the equation
 in~\eqref{chernblow}  gives that
\begin{equation}\label{granbret}
\ch_2(X_n(S)_{*})=\frac{3}{2}e_n^2 -24\sum_{l=1}^n\tau_l^{*}(\eta)_{|X_n(S)_{*}}.
\end{equation}
 The differential of the map $\rho\colon X_n(S)\to S^{[n]}$ gives the exact sequence
\begin{equation}
0\lra \rho^{*}(\Omega^1_{(S^{[n]})_{*}})\overset{(d\rho)^t}{\lra} \Omega^1_{X_n(S)_{*}}  \lra \iota_{*}(\iota^{*}(\cO_{X_n(S)_{*}}(-E_n))) \lra 0, 
\end{equation}
where $\iota\colon E_n\hra X_n(S)_{*}$ is the inclusion map.  Taking Chern characters we get that
\begin{equation}\label{alemagna}
\ch_2(X_n(S)_{*})=\rho^{*}\ch_2((S^{[n]})_{*})-\frac{3}{2}e_n^2.
\end{equation}
The equality in~\eqref{pullchernchar} follows from the equalities in~\eqref{granbret} and~\eqref{alemagna}.  
\end{proof}
\begin{proof}[Proof of  the equality in~\eqref{pullcasimir}]
\setcounter{equation}{0}
Let $\{\alpha_1,\ldots,\alpha_{22}\}$ be an orthonormal basis of $H^2(S;\CC)$. Then
\begin{equation}
q^{\vee}=\sum_{i=1}^{22}\mu(\alpha_i)^2-\frac{1}{2n-2}\delta_n^2,
\end{equation}
and hence
\begin{multline}\label{mcqueen}
\rho^{*}(q^{\vee})_{|X_n(S)_{*}}=\sum_{i=1}^{22}\left(\sum_{l=1}^n \tau_l^{*}(\alpha_i)\right)^2-\frac{1}{2n-2}e_n^2=\\
=22\sum_{l=1}^n \tau_l^{*}(\eta)+2\sum_{1\le l<m\le n}\left(\sum_{i=1}^{22}\tau_l^{*}(\alpha_i)\cdot \tau_m^{*}(\alpha_i)\right)-\frac{1}{2n-2}e^2.
\end{multline}

Let $D_n\subset S^n$ be the big diagonal. Then
\begin{equation}\label{newman}
\cl(D_n)=(n-1)\sum_{l=1}^n \tau_l^{*}(\eta)+\sum_{1\le l<m\le n}\left(\sum_{i=1}^{22}\tau_l^{*}(\alpha_i)\cdot \tau_m^{*}(\alpha_i)\right).
\end{equation}
Moreover it follows from the \lq\lq Key Formula\rq\rq (see for example Proposition~6.7 in~\cite{fulton-intersection-theory})  that we have the relation 
\begin{equation}\label{redford}
\tau^{*}(\cl(D_n))_{|X_n(S)_{*}}=-e_n^2.
\end{equation}
The equality in~\eqref{pullcasimir} follows at once from the equalities in~\eqref{newman} and~\eqref{redford}.
\end{proof}
\subsection{Proof of Proposition~\ref{prp:inietta}}\label{subsec:asiago}
\setcounter{equation}{0}
\begin{lmm}\label{lmm:dueinter}
Let $n\ge 2$. Then the classes $\alpha_n:=\sum_{l=1}^n\tau_l^{*}(\eta)_{|X_n(S)_{*}}$ and $e_n^2$ are linearly independent in $H^4(X_n(S)_{*};\ZZ)$. 
\end{lmm}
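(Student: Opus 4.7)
The plan is to test the claimed relation $a\alpha_n+b\,e_n^2=0$ against a compact complex 2-dimensional cycle $\tilde Z\subset X_n(S)_*$ and then restrict to the open subset on which $e_n$ vanishes.

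First I would fix $n-1$ pairwise distinct points $p_1,\ldots,p_{n-1}\in S$ and consider the surface $Z:=\{(x,p_1,\ldots,p_{n-1}):x\in S\}\subset(S^n)_*$. It meets the big diagonal transversely at the $n-1$ points where $x=p_i$, each lying in the smooth component $D^*_{1,i+1}$, so its strict transform $\tilde Z\subset X_n(S)_*$ is the blow-up of $Z\cong S$ at those $n-1$ points: a compact smooth complex surface. A short calculation then gives $\int_{\tilde Z}\alpha_n=1$ (only the $l=1$ summand $\tau_1^*(\eta)$ survives restriction to $\tilde Z$, and the blowup $\tilde Z\to Z\cong S$ has degree one), and $\int_{\tilde Z}e_n^2=-(n-1)$ (since $e_n|_{\tilde Z}$ is the sum of the $n-1$ pairwise disjoint exceptional $\PP^1$'s of $\tilde Z\to Z$, each with self-intersection $-1$). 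Pairing with $[\tilde Z]$ therefore yields the relation $a-(n-1)b=0$.

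Next I would restrict the hypothetical equation to $V:=X_n(S)_*\setminus E_n\cong S^n\setminus D_n$, on which $e_n$ vanishes identically. The equation reduces to $a\cdot\sum_{l=1}^n\pr_l^*(\eta)|_{V}=0$ in $H^4(V;\ZZ)$. The key input is that this class is nonzero. By the long exact sequence for the closed pair $(D_n\subset S^n)$, the kernel of the restriction $H^4(S^n;\ZZ)\to H^4(V;\ZZ)$ is the $\ZZ$-span of the fundamental classes $[D_{ij}]$ of the codim-two components of the big diagonal. So I would rule out any equation $\sum_l\pr_l^*(\eta)=\sum_{i<j}c_{ij}[D_{ij}]$ by a Künneth analysis on $H^*(S^n;\ZZ)=H^*(S;\ZZ)^{\otimes n}$: the class $[D_{ij}]$ projects onto the summand $H^2(S)\otimes H^2(S)$ sitting at positions $(i,j)$ as the Poincaré dual of $\Delta_S$, which is a nonzero class, and no other $[D_{i'j'}]$ with $\{i',j'\}\neq\{i,j\}$ contributes to this Künneth summand. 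Hence every $c_{ij}$ must vanish, which is incompatible with the nontrivial component of $\sum_l\pr_l^*(\eta)$ in the Künneth piece ``$\eta$ in position $l$, $1$ elsewhere''. Consequently $a=0$, and the relation $a-(n-1)b=0$ then forces $b=0$ as well.

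The main obstacle is precisely this last Künneth step: it is what rules out any cancellation between the ``horizontal'' classes $\pr_l^*(\eta)$ and the ``diagonal'' classes $[D_{ij}]$ inside $H^4(S^n;\ZZ)$. Every other step is a routine intersection-theoretic computation on the compact blow-up $\tilde Z$.
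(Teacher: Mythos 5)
Your argument is correct, and it overlaps with the paper's proof only in its first half. The paper also proceeds by evaluating the two classes against test surfaces: its first cycle $\Gamma$ (length-$n$ subschemes containing a fixed reduced length-$(n-1)$ subscheme) is essentially your $\tilde Z$, the blow-up of $S$ at $n-1$ points, and yields the same row of the Gram matrix. For the second test, however, the paper uses another compact surface $\Omega$, the blow-up of $C_1\times C_2$ at the points of $C_1\cap C_2$ for two transversely intersecting smooth curves on $S$ (assuming, as it may by deformation, that such curves exist), and checks that the resulting $2\times 2$ Gram matrix is nonsingular. You instead restrict the putative relation to $V=X_n(S)_{*}\setminus E_n\cong S^n\setminus D_n$, where $e_n$ dies, and show $\sum_l\pr_l^{*}(\eta)$ does not restrict to zero by identifying the kernel of $H^4(S^n;\ZZ)\to H^4(V;\ZZ)$ with the span of the classes of the components $D_{ij}$ (via the supports/Borel--Moore sequence, using that $D_n$ has pure dimension with components $D_{ij}$) and then running a K\"unneth computation in $H^*(S;\ZZ)^{\otimes n}$: the $H^2\otimes H^2$ component of the diagonal class at positions $(i,j)$ kills all coefficients $c_{ij}$, while $\sum_l\pr_l^{*}(\eta)$ is nonzero in a free group. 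This buys you independence from any auxiliary curves (so no projectivity or deformation argument is needed for $S$) at the cost of slightly more homological machinery, whereas the paper's route stays entirely within elementary intersection numbers on two explicit compact surfaces. Your bookkeeping ($\int_{\tilde Z}\alpha_n=1$, $\int_{\tilde Z}e_n^2=-(n-1)$, hence $a=(n-1)b$, then $a=0$ forces $b=0$ since $n\ge 2$) is accurate, including the transversality of $Z$ with the big diagonal and the identification $E_n|_{\tilde Z}=\sum_i F_i$ with disjoint $(-1)$-curves.
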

\begin{proof}
We prove the lemma  by integrating $\alpha_n$ and $e_n^2$ over algebraic $2$ cycles on $X_n(S)_{*}$ defined as follows. Let $p_1,\ldots,p_{n-1}\subset S$ be  $n-1$ distinct points, and let 
\begin{equation}
\Gamma:=\rho^{-1}(\{p_1,\ldots,p_{n-1},x) \mid x\in S\}).
\end{equation}
Clearly $\Gamma\subset X_n(S)_{*}$, and it is isomorphic to the blow up of $S$ at  $p_1,\ldots,p_{n-1}$. 
In order to define the second $2$ cycle we assume (as we may) that $S$ 
 contains two smooth curves $C_1,C_2$ intersecting  with  transverse intersection of cardinality $d>0$.
Let $q_1,\ldots,q_{n-2}\subset (S\setminus C_1\setminus C_2)$ be  $n-2$ distinct points, and let 
\begin{equation}
\Omega:=\rho^{-1}(\{q_1,\ldots,q_{n-2},x_1,x_2) \mid x_i\in C_i\}).
\end{equation}
Clearly $\Omega\subset X_n(S)_{*}$, and it is isomorphic to the blow up of $C_1\times C_2$ at  the $d$ points $(x,x)$ for $x\in C_1\cap C_2$. 

It makes sense to integrate $\alpha_n$ and $e_n^2$ over $\Gamma,\Omega$ 
because the latter are compact (complex) surfaces contained in $X_n(S)_{*}$.  
One checks easily that the $2\times 2$ \lq\lq Gram matrix\rq\rq\  of the integrals of $\alpha_n$ and $e_n^2$ over $\Gamma$ and $\Omega$ is given by Table~\ref{matgram}.
\begin{table}[tbp]
\caption{Integrals of $\alpha_n,e_n^2$ over $\Gamma,\Omega$}\label{matgram}
\vskip 1mm
\centering
\renewcommand{\arraystretch}{1.60}
\begin{tabular}{l l l}
   &  $\alpha_n$ & $e_n^2$ \\
\midrule
 $\Gamma$  &  $1$ & $-(n-1)$ \\
\midrule
$\Omega$ & $0$ & $-d$ \\
\bottomrule 
\end{tabular}
\end{table} 
It follows 
that $\alpha_n$ and $e_n^2$ are linearly independent. 
\end{proof}
Now we can prove Proposition~\ref{prp:inietta}. Proposition~\ref{prp:ixnessen} expresses the restriction to $X_n(S)_{*}$ of  
 $\rho^{*}(c_2(S^{[n]})$ and $q^{\vee}$ as linear combinations of $\alpha_n$ and $e_n$. The determinant of the $2\times 2$ matrix with entries the corresponding coefficients is non singular if and only if $n\notin\{2,3\}$, hence Proposition~\ref{prp:inietta} follows from  Lemma~\ref{lmm:dueinter}. 
\begin{rmk}
Let $n\in\{2,3\}$. By Proposition~\ref{prp:ixnessen} the classes $\rho^{*}\left(\ch_2(S^{[n]}\right)_{|X_n(S)_{*}}$ and 
$\rho^{*}(q^{\vee})_{|X_n(S)_{*}}$ are linearly dependent. This agrees with known results. In fact if $X$ is a HK of type $K3^{[2]}$ then 
 $c_2(X)$ and $q^{\vee}_X$ are linearly dependent because $\Sym^2 H^2(X;\QQ)=H^4(X;\QQ)$, and if $X$ is a HK of type $K3^{[3]}$ then $c_2(X)$  and $q^{\vee}_X$ are linearly dependent although $\Sym^2 H^2(X;\QQ)$ is strictly contained in  $H^4(X;\QQ)$, see Example~14 in~\cite{markman:coh-mod-sympl}, or Remark~3.3 in~\cite{green-kim-laza-robles}.

\end{rmk}

\section{Basic modular vector bundles on $S^{[n]}$}\label{sec:fibratifond}
\subsection{Summary of results}
\setcounter{equation}{0}
Let $S$ be a $K3$ surface. We maintain the notation introduced in Subsection~\ref{subsec:finardi}. Let $\cF$ be a locally free sheaf on $S$. Then
\begin{equation}
X_n(\cF):=\tau_1^{*}(\cF)\otimes\ldots\otimes\tau_n^{*}(\cF)
\end{equation}
is a locally free sheaf on $X_n(S)$. The map $\rho$ in~\eqref{commiso} is finite,  and moreover it is flat because 
$X_n(S)$ is CM by Theorem~3.1  in~\cite{haiman}. It follows that the pushforward $\rho_{*}(X_n(\cF))$ is also locally free. 
The  symmetric group $\cS_n$ acts on $X_n(S)$ compatibly with its permutation action on $S^n$, and hence 
the action lifts to  an action  $\mu_n^{+}$ on  $X_n(\cF)$.  Since $\mu_n$  maps to itself any fiber of  $\rho\colon X_n(S)\to S^{[n]}$,  we get
 an action  $\ov{\mu}_n^{+}\colon \cS_n \to \Aut(\rho_{*}X_n(\cF))$. 
\begin{dfn}
Let $\cF[n]^{+}\subset \rho_{*}X_n(\cF)$ be the sheaf of $\cS_n$-invariants for $\ov{\mu}_n^{+}$.
\end{dfn}
Since $\rho_{*}X_n(\cF)$ is locally free, so is $\cF[n]^{+}$. 

Let $r_0$ be the rank of $\cF$. Below is the main result of the present section.
\begin{prp}\label{prp:primeclassi}
Suppose that $\cF$ is spherical, i.e.~$h^p(S,\End^0(\cF))=0$ for all $p$. Let $n\ge 2$. Then
\begin{eqnarray}
\rk(\cF[n]^{+}) & = & r_0^n, \label{cizero} \\
c_1(\cF[n]^{+}) & = & r_0^{n-1} \left(\mu(c_1(\cF))-\frac{r_0- 1}{2}\delta_n\right), \label{ciuno} \\
\Delta(\cF[n]^{+})  & = & \frac{r_0^{2n-2}(r_0^2-1)}{12}c_2(S^{[n]}). \label{cidue}
\end{eqnarray}
\end{prp}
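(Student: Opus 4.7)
The rank formula~\eqref{cizero} is immediate: over a general reduced $Z=\{p_1,\ldots,p_n\}\in S^{[n]}$, the fiber of $\rho_{*}X_n(\cF)$ at $Z$ is the $\cS_n$-permutation representation $\bigoplus_{\sigma\in\cS_n}\bigotimes_{i}\cF_{p_{\sigma(i)}}$, whose space of invariants has dimension $r_0^n$.

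For~\eqref{ciuno} and~\eqref{cidue} my plan is to study the natural morphism $\varphi\colon\rho^{*}\cF[n]^{+}\to X_n(\cF)$ adjoint to the inclusion $\cF[n]^{+}\hra\rho_{*}X_n(\cF)$. Since $\rho$ is Galois $\cS_n$-\'etale over $X_n(S)_{*}\setminus E_n$, $\varphi$ is an isomorphism there, so its cokernel is supported on $E_n$. To identify this cokernel I would reduce to $n=2$: \'etale-locally near a general point of $E_n$ one has $X_n(S)_{*}\simeq\wt{S\times S}\times S^{n-2}$ with $\rho=\rho_2\times\Id$ and $\rho_2\colon\wt{S\times S}\to S^{[2]}$ the degree-two cover branched along the divisor of class $2\delta_2$. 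Trivializing $\cF$ near the coincidence point and writing the $\cS_2$-action on $X_2(\cF)$ as $e_i\otimes e_j\mapsto e_j\otimes e_i$, a direct calculation shows that the $\cS_2$-invariant sections of $X_2(\cF)$ are spanned by $e_i\otimes e_j+e_j\otimes e_i$ together with $u\cdot(e_i\otimes e_j-e_j\otimes e_i)$, where $u$ is a local equation of $E_2$. Hence $\im(\varphi)$ is locally $(\Sym^2\cF\oplus u\cdot\bw2\cF)\otimes\cF^{\otimes(n-2)}\subset X_n(\cF)$, so $\cC:=\coker(\varphi)$ is locally free on $E_n$ of rank $\binom{r_0}{2}r_0^{n-2}=\tfrac{r_0^{n-1}(r_0-1)}{2}$; on the component of $E_n$ where $\tau_i=\tau_j$, it is $\tau_i^{*}\bw2\cF\otimes\bigotimes_{k\ne i,j}\tau_k^{*}\cF$. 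The same local picture also yields $\rho^{*}\delta_n=e_n$.

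Applying the GRR formula~\eqref{chernpushforw} to $j_{n*}\cC$ and combining with $\ch(X_n(\cF))=\prod_i\tau_i^{*}\ch(\cF)$, I read off $\rho^{*}\ch(\cF[n]^{+})=\ch(X_n(\cF))-\ch(j_{n*}\cC)$. The degree-$1$ part gives $\rho^{*}c_1(\cF[n]^{+})=r_0^{n-1}\sum_i\tau_i^{*}c_1(\cF)-\tfrac{r_0^{n-1}(r_0-1)}{2}e_n=\rho^{*}\bigl(r_0^{n-1}(\mu(c_1(\cF))-\tfrac{r_0-1}{2}\delta_n)\bigr)$, and injectivity of $\rho^{*}$ on $\mu(H^2(S;\QQ))\oplus\QQ\delta_n\subset H^2(S^{[n]};\QQ)$ (trivial since $\sum_i\tau_i^{*}\alpha$ and $e_n$ are linearly independent in $H^2(X_n(S)_{*};\QQ)$) yields~\eqref{ciuno}. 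For~\eqref{cidue}, sphericity of $\cF$ together with Bridgeland--King--Reid's McKay correspondence (as explained in the introduction) gives $H^{*}(S^{[n]},\End^0(\cF[n]^{+}))=0$, so $\PP(\cF[n]^{+})$ extends to all small deformations of $S^{[n]}$ and Zhang's theorem yields $\Delta(\cF[n]^{+})=a\cdot c_2(S^{[n]})+b\cdot q^{\vee}$ for some $a,b\in\QQ$. Extending the Chern-character computation to degree~$2$ and using the sphericity relation $-2r_0\ch_2(\cF)+c_1(\cF)^2=2(r_0^2-1)\eta$ to eliminate $\ch_2(\cF)$, the mixed terms $\tau_i^{*}c_1\cdot\tau_j^{*}c_1$ and $e_n\cdot\tau_l^{*}c_1$ all cancel and one gets $\rho^{*}\Delta(\cF[n]^{+})=r_0^{2n-2}(r_0^2-1)\bigl[\,2\sum_{l}\tau_l^{*}\eta-\tfrac{1}{4}e_n^2\bigr]$ on $X_n(S)_{*}$. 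Comparing with $\rho^{*}(a\,c_2(S^{[n]})+b\,q^{\vee})$ as given by Proposition~\ref{prp:ixnessen} and invoking Proposition~\ref{prp:inietta} for $n\ge 4$ uniquely pins down $a=\frac{r_0^{2n-2}(r_0^2-1)}{12}$ and $b=0$; for $n\in\{2,3\}$ the linear dependence of $c_2(S^{[n]})$ and $q^{\vee}$ noted in the remark after Proposition~\ref{prp:inietta} reduces the comparison to a single coefficient, determined identically.

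The main difficulty lies in the local identification of $\cC=\coker(\varphi)$ along $E_n$---the explicit $\cS_2$-equivariant analysis of sections of $X_2(\cF)$ near $E_2$; once that is in hand, the rest is a lengthy but routine Chern-character bookkeeping in which the sphericity relation plays the crucial role of ensuring the final answer is proportional to $c_2(S^{[n]})$ alone.
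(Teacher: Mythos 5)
Your proposal is correct and follows essentially the same route as the paper: the exact sequence $0\to\rho^{*}\cF[n]^{+}\to X_n(\cF)\to\bigoplus_{j<k}\iota_{j,k,*}(\cQ_{j,k})\to 0$ over $X_n(S)_{*}$ with cokernel of $\bw2\cF$-type along $E_n$, GRR bookkeeping with the sphericity relation, and then the McKay-correspondence vanishing plus deformation of $\PP(\cF[n]^{+})$ and Zhang's theorem to force $\Delta(\cF[n]^{+})\in\langle c_2(S^{[n]}),q^{\vee}\rangle$, pinned down via Propositions~\ref{prp:ixnessen} and~\ref{prp:inietta} (with the $n\le 3$ case handled separately). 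The only difference is cosmetic: you spell out the local $\cS_2$-equivariant identification of the cokernel (and deduce $\rho^{*}\delta_n=e_n$), which the paper states without proof, and you get $c_1$ from linear independence upstairs rather than from the codimension-two complement of $\rho(X_n(S)_{*})$.
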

\begin{rmk}
The notation in~\eqref{ciuno} and~\eqref{cidue} is unambiguous because the group $H^*(S^{[n]};\ZZ)$ is torsion free by the main result 
in~\cite{markman-poisson} (see also~\cite{totaro-cohomhilb}). Notice that Proposition~\ref{prp:primeclassi} holds also (trivially) for $n=1$ provided we set $\delta_1=0$.
\end{rmk}
If $\cF$ is spherical,  then the vector bundle $\cF[n]^{+}$ is modular by Proposition~\ref{prp:primeclassi}, and we refer to it as a basic modular vector bundle. 
The proof of Proposition~\ref{prp:primeclassi} is in Subsection~\ref{subsec:dimoclassi}.
\begin{rmk}
The equalities in  Proposition~\ref{prp:primeclassi} should hold with the weaker hypothesis $\chi(S,\End^0(\cF))=0$. To prove this it would suffice to show that such a vector bundle is the limit of spherical vector bundles. 
\end{rmk}
\subsection{Chern classes of $\rho^{*}\cF[n]^{+}$ restricted to $X_n(S)_{*}$}
\setcounter{equation}{0}
Let $h_{+}\in H^{1,1}_{\QQ}(S^{[n]})$ be given by
\begin{equation}\label{deteffe}
h_{+}:=\mu(c_1(\cF))-\frac{r_0- 1}{2}\delta_n.
\end{equation}
In the present subsection we prove the following result.
\begin{prp}\label{prp:tiroclassi}
Let  $S$ be a $K3$ surface, and let $\cF$ be a vector bundle on $S$ such that $\chi(S,End^0 (\cF))=2$.
Let $r_0$ be the  rank of $S$, and   let $h^{+}\in H^{1,1}_{\QQ}(S^{[2]})$ be  as in~\eqref{deteffe}.
Then the following equalities hold:
\begin{eqnarray}
\ch_0(\rho^{*}\cF[n]^{+})_{|X_n(S)_{*}} & = & r_0^n \label{parzero} \\
\ch_1(\rho^{*}\cF[n]^{+})_{|X_n(S)_{*}} & = & r_0^{n-1} \rho^{*} (h^{+})_{|X_n(S)_{*}} \label{paruno} \\
\ch_2(\rho^{*}\cF[n]^{+})_{|X_n(S)_{*}} & = & r_0^{n-2}\rho^{*}
\left(\frac{(r_0^2-1)}{24}\ch_2(S^{[n]})+\frac{1}{2}h_{+}^2\right)_{|X_n(S)_{*}}  \label{pardue}
\end{eqnarray}
\end{prp}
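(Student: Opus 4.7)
The plan is to relate $\rho^{*}\cF[n]^{+}$ to $X_n(\cF)$ on $X_n(S)_{*}$ via a short exact sequence accounting for the ramification of $\rho$ along $E_n$, and to extract the three identities by computing Chern characters term by term.

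First I would construct, by adjunction applied to the inclusion $\cF[n]^{+}\hookrightarrow \rho_{*}X_n(\cF)$, the natural morphism
\begin{equation*}
\Phi\colon \rho^{*}\cF[n]^{+}\lra X_n(\cF),
\end{equation*}
which is injective because $\rho$ is finite flat (so $\rho^{*}$ is exact) and an isomorphism over the \'etale locus. To analyze $\Phi$ near a generic point of a component $E^{(l,m)}$ of $E_n$ (over the $(l,m)$-diagonal), I would pick local coordinates in which $\rho$ becomes $(u,\textrm{rest})\mapsto(u^2,\textrm{rest})$ with the stabilizer $\langle(l\,m)\rangle$ acting by $u\mapsto -u$. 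Decomposing the local fiber $V=V^+\oplus V^-$ of $X_n(\cF)$ under the swap on the $l,m$-tensor factors, a direct calculation shows that $\cF[n]^{+}$ locally equals $\cO_{S^{[n]}}\otimes V^+\,\oplus\, \cO_{S^{[n]}}\cdot u\otimes V^-$, so that $\Phi$ is the identity on the $V^+$-summand and multiplication by $u$ (a local equation for $E^{(l,m)}$) on the $V^-$-summand; the cokernel of $\Phi$ is therefore $\cO_{E^{(l,m)}}\otimes V^-|_{E^{(l,m)}}$. Since the $(l\,m)$-antisymmetric part of $\bigotimes_i\tau_i^{*}\cF|_{E^{(l,m)}}$ is canonically $(\bw{2}\tau_l^{*}\cF\otimes\bigotimes_{i\ne l,m}\tau_i^{*}\cF)|_{E^{(l,m)}}$ (using the canonical isomorphism $\tau_l^{*}\cF|_{E^{(l,m)}}\simeq \tau_m^{*}\cF|_{E^{(l,m)}}$), the constructions patch globally into the short exact sequence
\begin{equation*}
0\lra \rho^{*}\cF[n]^{+}\overset{\Phi}{\lra} X_n(\cF)\lra \bigoplus_{l<m}\iota_{E^{(l,m)}*}\cG^{(l,m)}\lra 0 \quad \text{on } X_n(S)_{*},
\end{equation*}
where $\cG^{(l,m)}:=(\bw{2}\tau_l^{*}\cF\otimes\bigotimes_{i\ne l,m}\tau_i^{*}\cF)|_{E^{(l,m)}}$.

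Second, I would read off the Chern character of $\rho^{*}\cF[n]^{+}$ from this sequence, writing $\ch(X_n(\cF))=\prod_i\tau_i^{*}\ch(\cF)$ and expanding $\ch(\iota_{E^{(l,m)}*}\cG^{(l,m)})$ via formula~\eqref{chernpushforw}. The substitutions $\rho^{*}\mu(c_1(\cF))|_{X_n(S)_{*}}=\sum_i\alpha_i$ (with $\alpha_i:=\tau_i^{*}c_1(\cF)$) and $\rho^{*}\delta_n|_{X_n(S)_{*}}=e_n$ --- the latter from the fact that $\rho$ is simply ramified along $E_n$ over the divisor $B_n=2\delta_n$ of non-reduced subschemes --- give \eqref{parzero} trivially, and \eqref{paruno} after observing that $\sum_{l<m}\binom{r_0}{2}r_0^{n-2}e^{(l,m)}=\tfrac{r_0-1}{2}r_0^{n-1}e_n$. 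For \eqref{pardue}, after expanding both sides and using Proposition~\ref{prp:ixnessen} for $\rho^{*}\ch_2(S^{[n]})|_{X_n(S)_{*}}$, the various cross terms involving the local sums $E_i:=\sum_{m\ne i}e^{(\min(i,m),\max(i,m))}$ cancel thanks to $\binom{r_0}{2}r_0^{n-3}=\tfrac{r_0-1}{2}r_0^{n-2}$; what remains reduces to the single identity $r_0\ch_2(\cF)=\tfrac12 c_1(\cF)^2-(r_0^2-1)\eta$ on $S$, equivalently $\int_S\Delta(\cF)=2(r_0^2-1)$, which is exactly the sphericity condition on $\cF$ (i.e.\ $\chi(S,\End^0\cF)=0$, interpreted appropriately).

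The main obstacle is the first step: verifying that the symmetric/antisymmetric splitting of the fibers is compatible across the disjoint components $E^{(l,m)}$ so as to globalize into the stated exact sequence, and identifying the cokernel concretely as $\cG^{(l,m)}$. Once this is in hand the Chern character computation is a systematic bookkeeping exercise combining~\eqref{chernpushforw} with Proposition~\ref{prp:ixnessen}.
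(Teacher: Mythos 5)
Your proposal is correct and follows essentially the same route as the paper: the identical short exact sequence $0\to\rho^{*}\cF[n]^{+}\to\tau_1^{*}\cF\otimes\cdots\otimes\tau_n^{*}\cF\to\bigoplus_{j<k}\iota_{j,k,*}\cQ_{j,k}\to 0$ on $X_n(S)_{*}$ with cokernel the $\bigwedge^2$-twisted sheaves on the components of $E_n$, followed by the same Chern character bookkeeping via~\eqref{chernpushforw}, the Riemann--Roch identity $2r_0\ch_2(\cF)=c_1(\cF)^2-2(r_0^2-1)\eta$, and the substitutions $\rho^{*}\mu(c_1(\cF))=\sum_l\tau_l^{*}c_1(\cF)$, $\rho^{*}\delta_n=e_n$. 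The only difference is that you spell out the local model for the adjunction map along $E_n$ to justify the exact sequence, a verification the paper leaves implicit.
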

\begin{proof}
Let $D_n\subset (S^n)_{*}$ be the (intersection of $(S^n)_{*}$ with the) big diagonal. For $1\le j<k\le n$ let $D_n(j,k)\subset D_n$ be the set of points 
$(x_1,\ldots,x_n)$ such that $x_j=x_k$. We have the open embedding
\begin{equation}\label{evidcoinc}
\begin{matrix}
D_n(j,k) & \overset{\ov{\epsilon}_{j,k}}{\hra} & S^{n-1} \\
(x_1,\ldots,x_n) & \mapsto & (x_j,x_1,\ldots,x_{j-1},\wh{x_j},x_{j+1},\ldots,x_{k-1},\wh{x_k},x_{k+1},\ldots,x_n)
\end{matrix}
\end{equation}
 Let  $\tau_{E_{n}}\colon E_{n}\to D_n$ be the restriction of $\tau$ to $E_{n}$, and let
  $E_n(j,k):=\tau_{E_{n}}^{-1}(D_n(j,k))$. 
Then $E_{n}=\coprod E_n(j,k)$. Let $\tau_{j,k}\colon E_n(j,k)\to D_n(j,k)$ be defined by the restriction of $\tau_{E_{n}}$, and let
\begin{equation}
\begin{matrix}
E_n(j,k) & \overset{\epsilon_{j,k}}{\lra} & S^{n-1} \\
y & \mapsto & \ov{\epsilon}_{j,k}(\tau_{j,k}(y))
\end{matrix}
\end{equation}
Let $\cQ_{j,k}$ be the locally free sheaf on 
$E_n(j,k)$  defined by 
\begin{equation}
\cQ_{j,k}:=\epsilon_{j,k}^{*}\left(\bigwedge^2\cF\boxtimes \cF\boxtimes\ldots\boxtimes\cF\right).
\end{equation}
Let $\iota_{j,k}\colon E_n(j,k)\hra X_n(S)_{*}$ be the inclusion map. We have an exact sequence
\begin{equation}
0\lra \rho^*\cF[n]^{+}\lra \tau_1^*(\cF)\otimes\ldots\otimes \tau_n^*(\cF)  \lra  
\oplus_{1\le j<k\le n} \iota_{j,k,*}(\cQ_{j,k})\lra 0.
\end{equation}
It follows that
\begin{equation}\label{tibbs}
 \rho^*\ch(\cF[n]^{+})= \tau_1^*\ch(\cF)\cdot\ldots\cdot \tau_n^*\ch(\cF) -  \sum_{1\le j<k\le n} \ch(\iota_{j,k,*}(\cQ_{j,k})).
\end{equation}
Since $\chi(S,End^0 (\cF))=2$, the
 Hirzebruch-Riemann-Roch Theorem gives that
\begin{equation}\label{sottsass}
2r_0\ch_2(\cF)=\ch_1(\cF)^2-2(r_0^2-1)\eta.
\end{equation}
Using the above equality, one gets that
modulo $H^6(X_n(S)_{*};\QQ)$ we have 
\begin{multline}\label{calda}
 \tau_1^*(\ch(\cF))\cdot\ldots\cdot \tau_n^*(\ch(\cF))= r_0^n+r_0^{n-1}\sum_{l=1}^n\tau_{l}^{*}(c_1(\cF))+ \\
+\frac{1}{2} r_0^{n-2}\sum_{l=1}^n\tau_{l}^{*}(c_1(\cF)^2-2(r_0^2-1)\eta)+\\
+ r_0^{n-2}\sum_{1\le l<m\le n}\tau_{l}^{*}(c_1(\cF))\cdot \tau_{m}^{*}(c_1(\cF)).
\end{multline}
Let $e_n(j,k)=\cl(E_n(j,k))$. 
Using the equality in~\eqref{chernpushforw}, one gets that modulo $H^6(X_n(S)_{*};\QQ)$  we have
\begin{multline*}
 \ch(\iota_{j,k,*}(\cQ_{j,k})) = \iota_{j,k,*}(\ch(\cQ_{j,k}))\cdot \left(1-\frac{e_n(j,k)}{2}\right)=\\
= \iota_{j,k,*}\left(\iota_{j,k}^{*}\left({r_0\choose 2}r_0^{n-2}+\frac{1}{2}(r_0-1)r_0^{n-2}\sum_{l=1}^n\tau_l^{*}c_1(\cF) \right)\right)
\cdot \left(1-\frac{e_n(i,j)}{2}\right)=\\
= {r_0\choose 2}r_0^{n-2} e_n(j,k)+\frac{1}{2}(r_0-1)r_0^{n-2} e_n(j,k)\cdot \sum_{l=1}^n\tau_l^{*}c_1(\cF)-
 \frac{1}{2}{r_0\choose 2}r_0^{n-2} e_n(j,k)^2.
\end{multline*}
The equalities in~\eqref{parzero}, \eqref{paruno} 
and~\eqref{pardue} follow at once from the equalities in~\eqref{tibbs}, \eqref{calda} and the above equality. 
\end{proof}
\subsection{Deformations of $(S^{[n]},\PP(\cF[n]^{+}))$}\label{subsec:deformocoppie}
\setcounter{equation}{0}
Let $\cF$ be a vector bundle on $S$, and let $f\colon\PP(\cF[n]^{+})\to S^{[n]}$ be the structure map. We let $\Def(\PP(\cF[n]^{+}),f, S^{[n]})$ be the deformation functor of the map $f$, see Definition 8.2.7 in~\cite{manetti-libro}.
\begin{prp}\label{prp:horimarco}
Suppose that the $K3$ surface $S$ is projective and that $\cF$ is a spherical vector bundle on $S$. Then the natural map 
$\Def(\PP(\cF[n]^{+}),f, S^{[n]})\to \Def(S^{[n]})$ is smooth.
\end{prp}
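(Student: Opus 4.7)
The plan is a three-step reduction, the essential ingredient being the Bridgeland-King-Reid-Haiman derived McKay correspondence together with the sphericality of $\cF$.

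First, I would reduce to a cohomological vanishing via Horikawa. The morphism $f\colon \PP(\cF[n]^{+})\to S^{[n]}$ is smooth, being a projective bundle, so deformations of $f$ extending a given first-order deformation of $S^{[n]}$ are controlled by the relative tangent bundle $\Theta_{\PP(\cF[n]^{+})/S^{[n]}}$. By Horikawa's Theorem 6.1 in~\cite{horikawa-II}, the morphism $\Def(\PP(\cF[n]^{+}),f,S^{[n]})\to\Def(S^{[n]})$ is smooth provided the obstruction space $H^{2}(\PP(\cF[n]^{+}),\Theta_{\PP(\cF[n]^{+})/S^{[n]}})$ vanishes. The relative Euler sequence yields $f_{*}\Theta_{\PP(\cF[n]^{+})/S^{[n]}}=\End^{0}(\cF[n]^{+})$ and $R^{i}f_{*}\Theta_{\PP(\cF[n]^{+})/S^{[n]}}=0$ for $i>0$, so the Leray spectral sequence collapses and it suffices to prove $H^{2}(S^{[n]},\End^{0}(\cF[n]^{+}))=0$.

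Second, I would obtain this vanishing from the derived McKay correspondence. Bridgeland-King-Reid combined with Haiman's analysis of the isospectral Hilbert scheme $X_{n}(S)$ of the diagram~\eqref{commiso} provides a derived equivalence $\Phi\colon D^{b}(S^{[n]})\xrightarrow{\sim}D^{b}_{\cS_{n}}(S^{n})$. The construction of $\cF[n]^{+}$ as the $\cS_{n}$-invariants of $\rho_{*}X_{n}(\cF)$ is designed precisely so that, up to a sign-character twist, $\Phi(\cF[n]^{+})=\cF^{\boxtimes n}$ with the natural permutation equivariant structure. Passing through the equivalence and applying Künneth yields
\[\Ext^{*}_{S^{[n]}}(\cF[n]^{+},\cF[n]^{+})\cong \left(H^{*}(S,\End(\cF))^{\otimes n}\right)^{\cS_{n}}.\]
Sphericality of $\cF$ gives $H^{*}(S,\End(\cF))=H^{*}(S,\cO_{S})$, so the right-hand side becomes $(H^{*}(S^{n},\cO_{S^{n}}))^{\cS_{n}}=H^{*}(S^{(n)},\cO_{S^{(n)}})$, which coincides with $H^{*}(S^{[n]},\cO_{S^{[n]}})$ because $\gamma$ is a crepant resolution with $R^{i}\gamma_{*}\cO_{S^{[n]}}=0$ for $i>0$. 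The trace splitting $\End(\cF[n]^{+})=\cO_{S^{[n]}}\oplus\End^{0}(\cF[n]^{+})$ then forces $H^{*}(S^{[n]},\End^{0}(\cF[n]^{+}))=0$ for every $*$, completing the cohomological input to Step~1.

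The main obstacle will be making rigorous the identification in the second step: one must verify that the BKR-Haiman equivalence sends the invariants construction $\cF[n]^{+}$ to $\cF^{\boxtimes n}$ with the correct $\cS_{n}$-equivariant structure, and that the sign-character twists cancel correctly in the Ext computation. Once this bookkeeping is settled, the remainder is a pure Künneth plus the trivial consequence of sphericality that $H^{*}(S,\End(\cF))=H^{*}(S,\cO_{S})$, and Horikawa's theorem converts the cohomological vanishing into the desired smoothness of deformation functors.
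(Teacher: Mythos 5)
Your proposal is correct and follows essentially the same route as the paper: Horikawa's theorem reduces smoothness to $H^2(\PP(\cF[n]^{+}),\Theta_{\PP(\cF[n]^{+})/S^{[n]}})=0$, the (Leray) spectral sequence with $f_{*}\Theta_{\PP(\cF[n]^{+})/S^{[n]}}\cong End^0\cF[n]^{+}$ and vanishing higher direct images reduces this to $H^p(S^{[n]},End^0\cF[n]^{+})=0$, which the paper obtains from the BKR--Haiman McKay correspondence. The only difference is that the paper simply cites this last vanishing (Proposition~5.4 of~\cite{ogfascimod}) rather than re-deriving it via the equivalence and K\"unneth as you sketch, and your "bookkeeping" caveat about the equivariant structure is exactly what that citation takes care of.
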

\begin{proof}
The result follows from a Theorem of Horikawa. In fact let $X:=\PP(\cF[n]^{+})$, $Y:=S^{[n]}$, and consider the exact sequence of locally free sheaves on $X$
\begin{equation}\label{succdiff}
0\lra \Theta_{X/Y}\lra \Theta_X\overset{df}{\lra} f^{*}\Theta_Y\lra 0.
\end{equation}
By~\cite[Theorem 6.1]{horikawa-II} (see also Corollary 8.2.14 in~\cite{manetti-libro}) it suffices to prove that 
the map  $H^1(X,\Theta_X)\to H^1(X,f^{*}\Theta_Y)$ is surjective and the map  $H^2(X,\Theta_X)\to H^2(X,f^{*}\Theta_Y)$ is injective. By the exact sequence in~\eqref{succdiff} it suffices to show that $H^2(X,\Theta_{X/Y})=0$. 
By the local-to-global spectral sequence abutting to $H^2(X,\Theta_{X/Y})$ we are done if we prove that 
\begin{equation}
H^p(X,R^q f_{*}(\Theta_{X/Y}))=0\qquad p+q=2.
\end{equation}
We have
\begin{equation}
R^q f_{*}(\Theta_{X/Y}))\cong 
\begin{cases}
End^0 \cF[n]^{+} & \text{if $q=0$,} \\
0 & \text{if $q>0$.} 
\end{cases}
\end{equation}
It follows from the McKay correspondence (see Proposition~5.4 in~\cite{ogfascimod}) that 
\begin{equation}\label{svanimento}
H^p(S^{[n]},End^0 \cF[n]^{+})=0\quad \forall p,
\end{equation}
and this finishes the proof.
\end{proof}
\begin{crl}\label{crl:deltasat}
Suppose that the $K3$ surface $S$ is projective and that $\cF$ is a spherical vector bundle on $S$. If $n\le 3$ then 
$\Delta(\cF[n]^{+})$ belongs to the saturation of $c_2(S^{[n]})$, if $n\ge 4$  then 
$\Delta(\cF[n]^{+})$ belongs to   the saturation of the span of $c_2(S^{[n]})$ and $q^{\vee}$.
\end{crl}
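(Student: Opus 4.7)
The strategy is to combine three ingredients and then deduce the statement mechanically. First, I would observe that $\Delta(\cF[n]^{+})$ is a characteristic class of the projective bundle $\PP(\cF[n]^{+})$: a direct computation using $c_1(\cE \otimes L) = c_1(\cE) + r c_1(L)$ and the usual formula for $c_2(\cE \otimes L)$ shows that $\Delta(\cE \otimes L) = \Delta(\cE)$ for any line bundle $L$, so $\Delta$ descends to the projectivization. Second, by Proposition~\ref{prp:horimarco}, the forgetful morphism $\Def(\PP(\cF[n]^{+}),f,S^{[n]}) \to \Def(S^{[n]})$ is smooth, and hence surjective on tangent spaces; consequently $\PP(\cF[n]^{+})$ extends to every small deformation of $S^{[n]}$. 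Third, I would invoke the main result of~\cite{zhang-char-form}: on a HK manifold $X$ of type $K3^{[n]}$, any degree-$4$ characteristic class of a projective bundle that extends to all small deformations lies in the rational span of $c_2(X)$ and $q^{\vee}_X$.

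Putting these together, one concludes $\Delta(\cF[n]^{+}) \in \QQ\cdot c_2(S^{[n]}) + \QQ\cdot q^{\vee}$. Since $\Delta(\cF[n]^{+})$ is an integral class on $S^{[n]}$ and $(2n-2)q^{\vee}$ lifts to an integral class (as recalled before the statement of Proposition~\ref{prp:inietta}), this class lies in the saturation of $\ZZ\cdot c_2(S^{[n]}) + \ZZ\cdot(2n-2)q^{\vee}$, which is precisely the statement for $n \ge 4$. For $n \le 3$, the remark immediately following Proposition~\ref{prp:inietta} records that $c_2(S^{[n]})$ and $q^{\vee}$ are linearly dependent, so the saturation of the span of these two classes coincides with the saturation of $c_2(S^{[n]})$ alone, giving the first case.

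The plan has a single non-elementary step, namely the invocation of~\cite{zhang-char-form}, which is entirely black-boxed. The rest is bookkeeping. If one wished to avoid citing Zhang, the main work would be to show directly that a rational $(2,2)$-class on $S^{[n]}$ which remains of Hodge type $(2,2)$ on every small deformation must lie in $\QQ\cdot c_2 + \QQ\cdot q^{\vee}$; this would require the Verbitsky representation of the LLV algebra on $H^4$, together with Markman's description of the non-Verbitsky component of $H^4(S^{[n]};\QQ)$ (see~\cite{markman:coh-mod-sympl}), and it is the step I would expect to be the main obstacle to a self-contained proof.
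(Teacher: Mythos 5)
Your proposal is correct and takes essentially the same route as the paper: Proposition~\ref{prp:horimarco} supplies the extension of the projective bundle (equivalently, versality of the induced family over $\Def(S^{[n]})$), the main result of~\cite{zhang-char-form} places the class in the rational span of $c_2(S^{[n]})$ and $q^{\vee}$, and the $n\le 3$ case reduces to the proportionality of $c_2$ and $q^{\vee}$. The only cosmetic difference is that the paper realizes $\pm\Delta(\cF[n]^{+})$ concretely as $c_2$ of the rank-$(r^2-1)$ bundle $F_{*}(\Theta_{\cX/\cY})$ on the versal family of the pair, restricting to $c_2(End^0\,\cF[n]^{+})$ on the central fibre, which makes precise the claim that the class stays of type $(2,2)$ on every small deformation, whereas you phrase the same point via twist-invariance of $\Delta$ and the statement of~\cite{zhang-char-form} for projective bundles.
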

\begin{proof}
Let $\cX\overset{F}{\lra}\cY\overset{G}{\lra} T$ be representative of $\Def(\PP(\cF[n]^{+}),f, S^{[n]})$. Thus both $F$ and $G$ are proper holomorphic maps of analytic spaces, there exists $0\in T$ such that $F^{-1}(G^{-1}(0))\to G^{-1}(0)$ is identified with $f\colon \PP(\cF[n]^{+})\to S^{[n]}$, and every (small) deformation of $f$ is identified with 
$F^{-1}(G^{-1}(t))\to G^{-1}(t)$ for some $t\in T$ (close to $0$). For $t\in T$ (close to $0$) the map $F^{-1}(G^{-1}(t))\to G^{-1}(t)$ is a $\PP^{r-1}$ fibration, where $r=\rk(\cF[n]^{+})$, and hence the push-forward $F_{*}(\Theta_{\cX/\cY})$ is a vector bundle on $\cY$ (of rank $r^2-1$). 
By Proposition~\ref{prp:horimarco} the family $G\colon \cY\to T$ is versal at $t=0$, and hence the characteristic class 
$c_2(F_{*}(\Theta_{X_0/Y_0})$ (here $X_0=F^{-1}(G^{-1}(0))$ and $Y_0= G^{-1}(0)$) remains of type $(2,2)$ for all small deformation of $Y_0=S^{[n]}$. If $n\le 3$ it follows that $c_2(F_{*}(\Theta_{X_0/Y_0})$ belongs to the saturation of $c_2(S^{[n]})$, and if $n\ge 4$   it follows that $c_2(F_{*}(\Theta_{X_0/Y_0})$  belongs to the   saturation of the span of $c_2(S^{[n]})$ and $q^{\vee}$, see~\cite{zhang-char-form}. We are done because
\begin{equation*}
c_2(F_{*}(\Theta_{X_0/Y_0}))=c_2(End^0 \cF[n]^{+})=-\Delta(\cF[n]^{+}).
\end{equation*}
\end{proof}
\begin{rmk}\label{rmk:siestende}
Let $\Def(S^{[n]},\det\cF[n]^{+}))$ be the deformation functor of the couple $(S^{[n]},\det\cF[n]^{+}))$. The natural map  
$\Def(\cF[n]^{+})\to \Def(S^{[n]},c_1(\cF[n]^{+}))$ is an isomorphism, by the Artamkin-Mukai 
Theorem~\cite{muk-sympl,artamkin-deftns} (see also~\cite{iacono-man-deftn-pairs}) and by  
the vanishing in~\eqref{svanimento}. Hence $\cF[n]^{+}$ extends (uniquely) to a vector bundle on any small deformation  of $S^{[n]}$ keeping 
$c_1(\cF[n]^{+})$ of type $(1,1)$.  
\end{rmk}
\subsection{Proof of Proposition~\ref{prp:primeclassi}}\label{subsec:dimoclassi}
\setcounter{equation}{0}
The equality in~\eqref{cizero} follows at once from the equality in~\eqref{parzero}. Similarly, the equality in~\eqref{ciuno} follows at once from the equality in~\eqref{paruno}, because the restriction map 
$H^2(S^{[n]};\ZZ)\to H^2(\rho(X_n(S)_{*}))$ is an isomorphism (the complement of $\rho(X_n(S)_{*})$ in $S^{[n]}$ has codimension greater than one). Lastly we prove the equality in~\eqref{cidue}. Proposition~\ref{prp:tiroclassi} and a straightforward computation give that 
\begin{equation*}
\rho^{*}\Delta(\cF[n]^{+})_{|X_n(S)_{*}}=\rho^{*}\left(\frac{r_0^{2n-2}(r_0^2-1)}{12}c_2\left(S^{[n]}\right)\right)_{|X_n(S)_{*}}.
\end{equation*}
If $n\le 3$ then by Proposition~\ref{crl:deltasat} (note: we may assume that $S$ is projective) $\Delta(\cF[n]^{+})$ is a (possibly rational) multiple of $c_2\left(S^{[n]}\right)$. Since the restriction of 
$\rho^{*}c_2\left(S^{[n]}\right)$ to $X_n(S)_{*}$ is non zero (by the equality in~\eqref{pullchernchar} and 
Lemma~\ref{lmm:dueinter}), the equality in~\eqref{cidue} follows. If $n\ge 4$ then by Proposition~\ref{crl:deltasat} 
 $\Delta(\cF[n]^{+})$ is a linear combination (possibly with rational coefficients)  of $c_2\left(S^{[n]}\right)$ and $q^{\vee}$, and the equality follows from Proposition~\ref{prp:inietta}. 
\section{Basic modular vector bundles on $S^{[n]}$ for $S$ an elliptic $K3$ surface}\label{sec:hilbdielle}
\subsection{Contents of the section}
\setcounter{equation}{0}
We show that by starting from  slope stable spherical vector bundles $\cF$ on an elliptic surface $S$ we can produce vector bundles $\cF[n]^{+}$ on $S^{[n]}$ with rank and first two Chern classes covering all the cases in Theorem~\ref{thm:unicita}. We also study the restriction of such an $\cF[n]^{+}$ to  fibers of the Lagrangian fibration $S^{[n]}\to\PP^n$.
\subsection{Basic modular sheaves with the required topology}
\setcounter{equation}{0}
The present section contains analogues of the results in Sections 6.2 and 6.3 of~\cite{ogfascimod}.
Let $S$ be a $K3$ surface with an elliptic fibration $S\to \PP^1$; we let $C$ be a  fiber of the elliptic fibration.  The 
claim below follows from surjectivity of the period map for $K3$ surfaces.
\begin{clm}\label{clm:eccoell}
Let $m_0,d_0$ be positive natural numbers. There exist $K3$ surfaces $S$  with an elliptic fibration $S\to \PP^1$ such that 
\begin{equation}\label{neronsevero}
\NS(S)=\ZZ[D]\oplus\ZZ[C], \quad
D\cdot D=2m_0,\quad D\cdot C=d_0.
\end{equation}
\end{clm}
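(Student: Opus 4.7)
The plan is to apply surjectivity of the period map for $K3$ surfaces together with Nikulin's lattice embedding theorems; only the last step (getting $C$ to be the class of an actual elliptic fibration) requires any thought.

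First I would introduce the abstract rank-$2$ even lattice $L$ with basis $D_0,C_0$ and Gram matrix
\begin{equation*}
\begin{pmatrix} 2m_0 & d_0 \\ d_0 & 0 \end{pmatrix}.
\end{equation*}
Its discriminant is $-d_0^2<0$, so $L$ has signature $(1,1)$. By Nikulin's criterion for primitive embeddings of even lattices into the $K3$ lattice $\Lambda_{K3}=U^{\oplus 3}\oplus E_8(-1)^{\oplus 2}$, the lattice $L$ admits a primitive embedding $L\hookrightarrow \Lambda_{K3}$: the numerical conditions (rank $2\le 22$, signature $(1,1)\le(3,19)$, and the constraints on the discriminant group) are easily verified.

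Next I would apply surjectivity of the period map: pick a period $\omega\in L^{\perp}\otimes\CC$ satisfying $\omega^2=0$, $\omega\bar\omega>0$ and (generically) $\omega^{\perp}\cap\Lambda_{K3}=L$, where the last condition excludes only a countable union of proper hyperplanes in $L^{\perp}\otimes\RR$. The strong Torelli theorem / surjectivity statement then produces a $K3$ surface $S$ together with a marking $H^2(S;\ZZ)\xrightarrow{\sim}\Lambda_{K3}$ under which $\NS(S)=L$.

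The remaining point is to promote $D_0$ and $C_0$ to an \emph{ample} class $D$ and the class $C$ of a \emph{fiber of an elliptic fibration}. Since $\NS(S)$ has rank $2$, the positive cone is subdivided by the hyperplanes orthogonal to the (possibly empty) set of $(-2)$-classes in $L$ into finitely many chambers, and the action of the Weyl group $W^{(-2)}(\NS(S))$ is simply transitive on them, with one chamber realised as the K\"ahler cone of $S$. After composing the marking with a suitable product of $(-2)$-reflections (and possibly sign changes, which preserve the Gram matrix), I may assume that $C_0$ lies in the closure of the K\"ahler cone and that $D_0$ lies in its interior. Setting $D:=D_0$ and $C:=C_0$, the class $D$ is then ample, and $C$ is an isotropic nef primitive class. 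By the classical fact that on a $K3$ surface a nef primitive isotropic class defines a base-point-free elliptic pencil (Pjatecki-Shapiro--Shafarevich; Riemann--Roch gives $h^0(C)\ge2$, nefness rules out base components, and $C^2=0$ forces the general member to be a smooth elliptic curve), the complete linear system $|C|$ produces the required elliptic fibration $S\to\PP^1$ whose general fibre has class $C$. The intersection numbers $D\cdot D=2m_0$ and $D\cdot C=d_0$ hold by construction.

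The only delicate point is the chamber analysis in the last paragraph, because for general $(m_0,d_0)$ the lattice $L$ does contain $(-2)$-classes (e.g.\ whenever $a(am_0+bd_0)=-1$ has integer solutions). I expect this to be the main obstacle, but it is dispatched by the standard fact that the $W^{(-2)}$-orbit of any class in the positive cone meets the K\"ahler chamber, so no arithmetic obstruction arises.
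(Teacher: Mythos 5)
Your proof is correct and is the standard fleshing-out of exactly the route the paper takes — the paper itself offers nothing beyond the remark that the claim follows from surjectivity of the period map, and your steps (primitive embedding of the rank-two lattice into the $K3$ lattice, a very general period so that $\NS(S)=L$, Weyl-group/sign adjustment to make the isotropic class nef, and the Pjatecki-Shapiro--Shafarevich fact that a primitive nef isotropic class gives an elliptic pencil) are the expected details. The only blemish is the assertion that $D_0$ can \emph{simultaneously} be placed in the interior of the K\"ahler cone: the fundamental-domain fact you quote moves one class at a time and does not yield this (indeed for $m_0=1$, $d_0=2$ the class $D-C$ is a $(-2)$-class with $D\cdot(D-C)=0$, and since one of $\pm(D-C)$ is effective $D$ can never be ample), but this is harmless because the statement only needs $C$ to be a nef primitive isotropic fiber class — ampleness of $D$ is not required and is used nowhere else in your argument.
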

The following result is a (slight) extension of Proposition~6.2 in~\cite{ogfascimod} (and is more or less well known by experts). 
\begin{prp}\label{prp:rigsuk}
Let $m_0,r_0,s_0\in\NN_{+}$ and let $t,d_0\in\ZZ$. Suppose  that  
\begin{enumerate}
\item[(a)]
$t^2 m_0=r_0s_0-1$,
\item[(b)]
$d_0$ is  coprime to $r_0$, 
\item[(c)]
we have 
 \begin{equation}\label{digrande}
d_0>\frac{(2m_0+1)r_0^2(r_0^2-1)}{4}.
\end{equation}
\end{enumerate}
 Let $S$ be an elliptic $K3$ surface as in Claim~\ref{clm:eccoell}.
 Then there exists a vector bundle $\cF$ on $S$ such that the following hold:
 \begin{enumerate}
\item
 $v(\cF)=(r_0,tD,s_0)$,
\item
$\chi(S,End(\cF))=2$,
\item
$\cF$  is $L$ slope-stable for any polarization $L$ of $S$, 
\item
and the restriction of $\cF$ to \emph{every} fiber of  the  elliptic fibration $S\to \PP^1$ is slope-stable. 
\end{enumerate}
(Notice that every fiber is irreducible by our assumptions on 
$\NS(S)$, hence slope-stability of a sheaf on a fiber is well defined, i.e.~independent of the choice of a polarization.)
\end{prp}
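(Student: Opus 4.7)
\textbf{Proof plan for Proposition~\ref{prp:rigsuk}.}

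The plan is to produce $\cF$ as the (unique) stable sheaf with the specified Mukai vector using Yoshioka's machinery, then to verify each of the four listed properties. Throughout one works with the Mukai vector $v=(r_0,tD,s_0)\in H^*(S;\ZZ)$, with pairing $\langle v,v\rangle=(tD)^2-2r_0 s_0=2t^2 m_0-2r_0 s_0=-2$ by hypothesis~(a). Moreover $v$ is primitive: from $r_0 s_0 - t^2 m_0 = 1$ any common divisor of $r_0$ and $s_0$ divides $1$, and then $t$ must be coprime to both $r_0$ and $s_0$ as well. The Hirzebruch--Riemann--Roch computation on the $K3$ surface gives $\chi(S,End(\cF))=-\langle v,v\rangle+2=2$, yielding (ii) as soon as (i) is established.

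\emph{Step 1 (existence and local freeness).} Pick any $v$-generic polarization $L_0$ of $S$. Since $v$ is primitive with $\langle v,v\rangle=-2$, Yoshioka's existence theorem for moduli of sheaves on $K3$ surfaces produces a unique (up to isomorphism) $L_0$-slope-stable torsion-free sheaf $\cF_0$ with $v(\cF_0)=v$. To upgrade to local freeness, note that $\cF_0^{\vee\vee}$ is still $L_0$-slope-stable (any destabilizer intersects $\cF_0$ in a subsheaf of the same rank and $c_1$, hence destabilizes $\cF_0$); but then Mukai's inequality $\langle v(\cF_0^{\vee\vee}),v(\cF_0^{\vee\vee})\rangle\geq-2$ together with $v(\cF_0^{\vee\vee})-v(\cF_0)=(0,0,\ell)$ forces $\ell=0$, so $\cF_0=\cF_0^{\vee\vee}$ is locally free. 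This furnishes (i) and (ii) for $\cF:=\cF_0$.

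\emph{Step 2 (stability for every polarization).} This is the step where the strong bound~\eqref{digrande} enters. The key is a walls-and-chambers analysis in $\Amp(S)\subset(\ZZ D\oplus\ZZ C)\otimes\RR$: the ample cone of $S$ is partitioned by (at most countably many) walls, and $\cF$ remains slope-stable across a chamber. A wall is supported on $\xi^\perp$ for some primitive $\xi=aD+bC$ which is the $c_1$-component of a potential proper destabilizing subsheaf $\cG\subset\cF$ of some rank $r'$ with $0<r'<r_0$. Writing out the slope equalities on the wall, together with the Bogomolov inequality applied to $\cG$ and to $\cF/\cG$ (both of which are $L$-semistable on the wall), gives bounds $(2m_0) a^2+2ab d_0/?\ldots$ constraining $(a,b,r')$ inside an explicit bounded region depending on $m_0$ and $r_0$ via the discriminant $\Delta(\cF)=2(r_0^2-1)$. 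A direct numerical check then shows that any such $(a,b)$ would require $d_0\leq \frac{(2m_0+1)r_0^2(r_0^2-1)}{4}$, contradicting~\eqref{digrande}. Hence there are no walls in $\Amp(S)$ and $\cF$ is $L$-slope-stable for every polarization $L$, giving (iii).

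\emph{Step 3 (stability on every fiber).} The degree of $\cF$ on any fiber $C$ of $S\to\PP^1$ equals $c_1(\cF)\cdot C=td_0$; by hypothesis~(b) and the coprimality $\gcd(r_0,t)=1$ noted above we have $\gcd(r_0,td_0)=1$. On an integral Gorenstein curve of arithmetic genus $1$ (all our fibers are smooth elliptic or integral rational nodal/cuspidal, since $\NS(S)$ is as in~\eqref{neronsevero}) a torsion-free sheaf with coprime rank and degree is slope-semistable iff slope-stable. So (iv) reduces to showing $\cF|_C$ is slope-semistable for every fiber $C$. I expect this to be the main obstacle. The argument I would run is: suppose some fiber $C_0$ admits a maximal destabilizing subsheaf $\cQ\subset\cF|_{C_0}$ of rank $r'$ and slope strictly larger than $td_0/r_0$. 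The kernel $\cE$ of $\cF\twoheadrightarrow\cF|_{C_0}\twoheadrightarrow(\cF|_{C_0})/\cQ$ is a torsion-free subsheaf of $\cF$ of rank $r_0$ with $c_1(\cE)=c_1(\cF)-C_0$, which upon applying the standard elementary-transformation / restriction-theorem bookkeeping produces a genuine subsheaf of $\cF$ whose first Chern class is $a'D+b'C$ with $(a',b',r')$ satisfying numerical constraints governed by the destabilization on $C_0$. Bogomolov's inequality applied to this subsheaf and to its quotient, combined with $\Delta(\cF)=2(r_0^2-1)$, then forces an inequality violating~\eqref{digrande}, a contradiction. This extends the $n=2$ argument of~\cite[Proposition~6.2]{ogfascimod} to the present setting; once (iv) is in hand, the proposition is proved.
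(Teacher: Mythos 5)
Your overall strategy coincides with the one the paper intends: the paper's own proof is a one-line appeal to the proof of Proposition~6.2 of~\cite{ogfascimod} (with Kuleshov/Yoshioka quoted for the existence of the spherical stable bundle), and that argument runs exactly along your axes -- existence of a rigid stable bundle with Mukai vector $v=(r_0,tD,s_0)$, the largeness of $d_0$ in~\eqref{digrande} forcing the ample cone to contain no walls (equivalently, every polarization is suitable for the elliptic fibration), coprimality of $r_0$ and $td_0$ upgrading semistability to stability on the integral fibers, and an elementary modification along a hypothetical bad fiber to rule out unstable restrictions. Your Steps~1 and~2 are fine in outline (Step~2 is asserted rather than carried out, but it is a routine lattice computation in $\ZZ D\oplus\ZZ C$ with $\Delta(\cF)=2(r_0^2-1)$, and the bound~\eqref{digrande} is of exactly the right shape).

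The genuine gap is in Step~3, which you yourself flag as the main obstacle, and the mechanism you propose there does not work as written. The kernel $\cF'$ of $\cF\to i_*\bigl((\cF|_{C_0})/\cQ\bigr)$ has full rank $r_0$ and $c_1(\cF')=c_1(\cF)-\rho C$ ($\rho$ the rank of the quotient), not $c_1(\cF)-C_0$; in particular it is not a slope-destabilizing subsheaf of $\cF$, so no ``genuine subsheaf with $c_1=a'D+b'C$'' contradicting~\eqref{digrande} comes out of this, and Bogomolov's inequality $\Delta\ge 0$ is far too weak to produce a contradiction (the modification may drop $\Delta$ only by $2$). The correct mechanism is different and does not use~\eqref{digrande} at this point: a direct Chern character computation gives $\Delta(\cF')=\Delta(\cF)+2(r_0\chi(\cQ')-\rho\, td_0)$ for the minimal destabilizing quotient $\cQ'$, which is strictly smaller than $\Delta(\cF)=2(r_0^2-1)$ (coprimality of $r_0$ and $td_0$ guarantees a drop of at least $2$); on the other hand $\cF'$ agrees with $\cF$ away from $C_0$, so its restriction to the generic fiber is still slope stable, hence $\cF'$ is slope stable for a suitable polarization (this is where the wall-free statement of Step~2 is used), hence simple, and a simple sheaf on a $K3$ satisfies $\langle v,v\rangle\ge -2$, i.e.\ $\Delta\ge 2(r_0^2-1)$ -- contradiction. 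Without replacing Bogomolov by this sharp Mukai-pairing bound (equivalently, by the minimality of the discriminant of a simple sheaf of rank $r_0$ on a $K3$), your plan does not close the decisive item~(4), and item~(3) for polarizations far from the suitable chamber still rests on the unexecuted computation of Step~2.
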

\begin{proof}
One proceeds literally as in the proof of  Proposition~6.2 in~\cite{ogfascimod}.

\end{proof}
Assume that  $n,r_0,g,l,e\in\NN_{+}$, and that the equalities in~\eqref{nicoletta}, \eqref{nuvolari} and\eqref{cantarini}  (in Theorem~\ref{thm:unicita}) hold. Let
\begin{equation}\label{chartroux}
s_0:=\frac{g^2\ov{e}+ (2n-2)(r_0-1)^2+8}{8r_0},\quad m_0:=\frac{ g^2 \ov{e}+ 2(n-1)(r_0-1)^2}{8g^2 l^2}. 
\end{equation}
Then $s_0, m_0$ are  integers by the equalities in~\eqref{nuvolari} and in~\eqref{cantarini}. 
A straightforward computation gives that 
\begin{equation}\label{stanco}
(gl)^2 m_0=r_0s_0-1. 
\end{equation}
Let $S$ be a $K3$ surface as in Claim~\ref{clm:eccoell}, where $m_0$ is as in~\eqref{chartroux}, and $d_0$ is an integer  coprime to $r_0$  such that the inequality in~\eqref{digrande} holds. By Proposition~\ref{prp:rigsuk} 
there exists a  vector bundle $\cF$ on $S$ such that 
\begin{equation}\label{lezozzone}
v(\cF)=(r_0,glD,s_0)
\end{equation}
and Items (2)-(4) of that same proposition hold.
\begin{clm}\label{clm:classigiuste}
Keep notation and hypotheses as above, in particular $\cF$ is the vector bundle on $S$ such that the equation in~\eqref{lezozzone} 
and Items (2)-(4) of  Proposition~\ref{prp:rigsuk} hold. 
Let $\cE:=\cF[n]^{+}$. Then we have
 \begin{equation}\label{robertbyron}
r(\cE)=r_0^n,\quad c_1(\cE)=\frac{g\cdot r_0^{n-1}}{i} h,\quad \Delta(\cE)  =  
\frac{r_0^{2n-2}(r_0^2-1)}{12}c_2(S^{[n]}),
\end{equation}
where $h\in \NS(S^{[n]})$ is primitive, $q(h)=e$ and $\divisore(h)=il$.
\end{clm}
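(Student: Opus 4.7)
The plan is to apply Proposition~\ref{prp:primeclassi} directly to the vector bundle $\cF$ produced by Proposition~\ref{prp:rigsuk}. First I would verify that $\cF$ is spherical, as required by Proposition~\ref{prp:primeclassi}: since $\cF$ is slope-stable (for any polarization), $\Hom(\cF,\cF)=\CC$ and, by Serre duality on the $K3$ surface $S$, $\Ext^2(\cF,\cF)\cong\Hom(\cF,\cF)^{\vee}=\CC$; combined with $\chi(S,\End\cF)=2$ from Item~(2) of Proposition~\ref{prp:rigsuk}, this forces $\Ext^1(\cF,\cF)=0$, so $h^p(S,\End^0\cF)=0$ for all $p$. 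With this in hand, the first and third equalities of~\eqref{robertbyron} follow at once from~\eqref{cizero} and~\eqref{cidue}.

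The substantive work lies in the middle equality. Using~\eqref{ciuno} and $c_1(\cF)=glD$, I would compute
\[
c_1(\cE)=r_0^{n-1}\left(gl\,\mu(D)-\tfrac{r_0-1}{2}\,\delta_n\right),
\]
so the desired identity $c_1(\cE)=(g\,r_0^{n-1}/i)\,h$ forces the definition
\[
h:=il\,\mu(D)-\tfrac{i(r_0-1)}{2g}\,\delta_n.
\]
I would then verify that $h$ is an \emph{integral} class: the coefficient $i(r_0-1)/(2g)$ is a positive integer by~\eqref{adancona} together with the fact that $i\equiv r_0\pmod 2$ (so $i=2$ when $r_0$ is even and $i=1$ when $r_0$ is odd). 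Next, since $\mu(H^2(S;\ZZ))\oplus\ZZ\delta_n$ is a primitive sublattice of $H^2(S^{[n]};\ZZ)$ and $D$ is primitive in the unimodular $K3$ lattice $H^2(S;\ZZ)$, primitivity of $h$ reduces to coprimality of the integers $il$ and $i(r_0-1)/(2g)$; this follows from $\gcd\{l,(r_0-1)/g\}=1$ in~\eqref{nicoletta} after a short parity check in each of the two cases.

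The remaining two verifications are direct calculations with the BBF form. For $q(h)=e$, orthogonality of $\mu(H^2(S;\ZZ))$ and $\ZZ\delta_n$ together with $q(\mu(D))=2m_0$ and $q(\delta_n)=-2(n-1)$ yields
\[
q(h)=2i^2l^2m_0-\frac{i^2(n-1)(r_0-1)^2}{2g^2},
\]
which, after substituting the expression for $m_0$ from~\eqref{chartroux}, collapses to $i^2\ov{e}/4$; this equals $e$ in both parity cases by the very definition of $\ov e$. For $\divisore(h)=il$, I would use that $D$ has divisibility $1$ in $H^2(S;\ZZ)$ to pair $h$ against an $\alpha\in H^2(S;\ZZ)$ with $D\cdot\alpha=1$, giving $q(h,\mu(\alpha))=il$, and separately pair $h$ with $\delta_n$ to get $i(r_0-1)(n-1)/g$, which is a multiple of $il$ thanks to $l\mid (n-1)$ in~\eqref{nicoletta}; the resulting gcd is $il$, as desired. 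I do not expect any genuine obstacle: the congruences~\eqref{adancona}--\eqref{cantarini} and the explicit formula for $m_0$ in~\eqref{chartroux} are engineered precisely so that the class $h$ extracted from~\eqref{ciuno} is integral, primitive, and has BBF square $e$ and divisibility $il$.
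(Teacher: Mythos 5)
Your proposal is correct and follows essentially the same route as the paper: you define the same class $h=il\,\mu(D)-\tfrac{i(r_0-1)}{2g}\delta_n$ forced by~\eqref{ciuno}, check integrality, primitivity, $q(h)=e$ and $\divisore(h)=il$ from~\eqref{adancona}, \eqref{nicoletta} and~\eqref{chartroux}, and conclude by Proposition~\ref{prp:primeclassi}. The only difference is that you spell out details the paper leaves implicit (in particular the sphericality of $\cF$ needed to invoke Proposition~\ref{prp:primeclassi}), which is fine.
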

\begin{proof}
Let
\begin{equation}\label{nemequittepas}
h:=il\mu(\cl(D)-i\frac{r_0-1}{2g}\delta_n.
\end{equation}
Then $h$ is integral by the hypothesis in~\eqref{adancona}, primitive by the third equality in~\eqref{nicoletta}, $q(h)=e$ by the second equality in~\eqref{chartroux}, and $\divisore(h)=il$. The equalities in~\eqref{robertbyron} hold by Proposition~\ref{prp:primeclassi}.
\end{proof}
\subsection{Restriction of  $\cF[n]^{+}$ to Lagrangian fibers}\label{subsec:restringo}
\setcounter{equation}{0}
\begin{dfn}\label{dfn:laghilb}
Let $S\to\PP^1$ be an elliptically fibered $K3$ surface. If $x\in\PP^1$ we let $C_x$ be the (scheme theoretic) elliptic fiber over $x$.   Let $B=\{b_1,\ldots,b_m\}\subset\PP^1$ be the set of $x$ such that  $C_x$  is singular. Then $B$ is not empty (generically $m=24$). 
 The \emph{Lagrangian fibration associated to $S\to\PP^1$}  is the map
$\pi\colon S^{[n]}\to  \PP^n$  given by the  composition 
\begin{equation}\label{fiblagk3n}
S^{[n]}\to S^{(n)}\to (\PP^1)^{(n)}\cong \PP^n.
\end{equation}
\end{dfn}
We record a few facts regarding the (scheme theoretic) fibers of $\pi$. 
Let  $x_1,\ldots,x_n\in \PP^1$ be pairwise distinct: then 
\begin{equation}\label{prodfibre}
\pi^{-1}(x_1+\ldots+x_n)\cong C_{x_1}\times\ldots C_{x_n}.
\end{equation}
Next we describe the discriminant locus of $\pi\colon S^{[n]}\to  (\PP^1)^{(n)}$, i.e.~the subset  
$\cD\subset (\PP^1)^{(n)}$ parametrizing cycles $x_1+\ldots+x_n$ 
such that $\pi^{-1}(x_1+\ldots+x_n)$ is singular. For $b_j\in B$ let $\cD(b_j)\subset (\PP^1)^{(n)}$ be the irreducible divisor parametrizing 
cycles $x_1+\ldots+x_n$ such that $x_i=b_j$  for some $i\in\{1,\ldots,n\}$. Let
\begin{equation}
\cT:=\{\sum\limits_{i} m_i x_i\in (\PP^1)^{(n)}\mid \text{$m_i\ge 2$  for some $i\in\{1,\ldots,n\}$}\}.
\end{equation}
Note that the fibers of $\pi$ over points of $\cT$ are reducible and non reduced.
\begin{prp}
The  irreducible decomposition of the discriminant locus $\cD$ of $\pi\colon S^{[n]}\to  \PP^n$ is given by
\begin{equation}\label{decompdiscr}
\cD=\cT\cup\bigcup\limits_{j=1}^m \cD(b_j).
\end{equation}
\end{prp}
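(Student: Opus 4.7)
The plan is to prove the set-theoretic equality $\cD = \cT\cup \bigcup_j \cD(b_j)$ and then verify that each listed piece is irreducible. The work splits according to whether the cycle has distinct entries or not.

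For the easy direction, the point is formula~\eqref{prodfibre}. If $\zeta = x_1+\ldots+x_n$ has all $x_i$ pairwise distinct and all $x_i\notin B$, then $\pi^{-1}(\zeta)\cong C_{x_1}\times\ldots\times C_{x_n}$ is a product of smooth elliptic curves, hence smooth; this gives $\cD\subset \cT\cup\bigcup_j \cD(b_j)$. The same formula applied to a generic $\zeta = b_j+x_2+\ldots+x_n\in \cD(b_j)$ (with $x_2,\ldots,x_n$ distinct, none equal to $b_j$ and none in $B$) shows $\pi^{-1}(\zeta)\cong C_{b_j}\times C_{x_2}\times\ldots\times C_{x_n}$ is singular because the nodal curve $C_{b_j}$ is; closedness of $\cD$ then yields $\cD(b_j)\subset \cD$.

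The heart of the proof is the inclusion $\cT\subset \cD$. Take a generic $\zeta = 2x_1+x_3+\ldots+x_n\in \cT$ with $x_1,x_3,\ldots,x_n$ pairwise distinct and none in $B$. Every $Z\in\pi^{-1}(\zeta)$ decomposes as a disjoint union $Z = Z_1\sqcup Z_3\sqcup\ldots\sqcup Z_n$, where each $Z_i$ (for $i\ge 3$) is a point of $C_{x_i}$ and $Z_1$ is a length-$2$ subscheme of $S$ set-theoretically supported on $C_{x_1}$ with $f_{*}[Z_1]=2x_1$. A direct local computation (the pushforward sheaf $f_{*}\cO_{Z_1}$ is isomorphic to $\cO_{\PP^1}/\gm_{x_1}^2$ regardless of tangent direction) shows that such a $Z_1$ falls into one of two loci: (a) $Z_1\subset C_{x_1}$ as a subscheme, giving $C_{x_1}^{[2]}\cong C_{x_1}^{(2)}$; or (b) $Z_1$ is a curvilinear double point at some $p\in C_{x_1}$ whose tangent direction lies in $\PP(T_pS)\setminus \PP(T_pC_{x_1})$. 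Both loci have dimension $2$, and they are distinct irreducible components of the reduced fiber: subschemes in (a) are contained in the curve $C_{x_1}$ whereas generic ones in (b) are not, so neither closure contains the other, and they meet exactly along the common curve of double points whose tangent lies in $T C_{x_1}$. Multiplying by $C_{x_3}\times\ldots\times C_{x_n}$ yields two distinct $n$-dimensional components of $\pi^{-1}(\zeta)$; hence the fiber is reducible and so singular.

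Irreducibility is routine: $\cD(b_j)\cong (\PP^1)^{(n-1)}\cong \PP^{n-1}$, and $\cT$ is the image of the big diagonal of $(\PP^1)^n$ under the quotient $(\PP^1)^n\to (\PP^1)^{(n)}$, or equivalently the discriminant hypersurface in $\PP^n\cong (\PP^1)^{(n)}$, so it is irreducible. The listed pieces are pairwise distinct because their generic points lie in different strata for the partition-type stratification of $(\PP^1)^{(n)}$. The main obstacle I expect is justifying cleanly that the loci (a) and (b) are really two distinct irreducible components rather than having coincident closures; the tangent-direction dichotomy above settles it, but one can also double-check in an explicit affine chart of $S^{[2]}$ near a length-$2$ subscheme supported on $C_{x_1}$, where the fiber over $2x_1$ is visibly cut out by two quadrics whose zero locus is the union of two distinct $2$-planes meeting along a line.
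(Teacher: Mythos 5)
The paper offers no proof of this proposition at all: it is stated as standard, with only the preceding observation that the fibers over points of $\cT$ are reducible and non-reduced. So there is no argument of record to compare yours with; judged on its own, your proof is correct and supplies exactly the verification being taken for granted. The skeleton is the natural one: since $\pi$ factors through the Hilbert--Chow morphism and the symmetric power of $S\to\PP^1$, membership of $Z$ in a fiber depends only on its cycle; away from $\cT\cup\bigcup_j\cD(b_j)$ the fiber is a product of smooth elliptic curves by~\eqref{prodfibre}; over a general point of $\cD(b_j)$ it acquires the singular factor $C_{b_j}$; and over a general point $2x_1+x_3+\cdots+x_n$ of $\cT$ its reduction is the union of the two $n$-dimensional irreducible sets $C_{x_1}^{(2)}\times C_{x_3}\times\cdots\times C_{x_n}$ and $\PP(\Theta_{S}|_{C_{x_1}})\times C_{x_3}\times\cdots\times C_{x_n}$, which meet along the locus of double points tangent to $C_{x_1}$, so the fiber is singular there. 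Closedness of $\cD$, irreducibility of $\cT$ and of $\cD(b_j)\cong(\PP^1)^{(n-1)}$, and the fact that all listed loci are distinct of dimension $n-1$ then give the decomposition.

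Two small blemishes, neither of which is a gap. First, the parenthetical claim that $f_{*}\cO_{Z_1}\cong\cO_{\PP^1}/\gm_{x_1}^2$ \emph{regardless} of tangent direction is false when $Z_1$ is tangent to the fibre $C_{x_1}$: in that case $\gm_{x_1}$ acts by zero and $f_{*}\cO_{Z_1}\cong(\cO_{\PP^1}/\gm_{x_1})^{\oplus 2}$. What you actually need, and all you need, is that $\pi(Z_1)$ depends only on the cycle $[Z_1]=2x_1$ (the factorization through Hilbert--Chow), equivalently that $f_{*}\cO_{Z_1}$ has length $2$ at $x_1$ in every case. Second, the closing chart check is not quite as stated: in the standard chart of $S^{[2]}$ (subschemes finite over a coordinate transverse to the fibration) the fiber over $2x_1$ is cut out by a quadric and a cubic, its reduction becomes two $2$-planes meeting along a line only after a polynomial change of coordinates, and the component $C_{x_1}^{(2)}$ occurs with multiplicity two --- consistent with the paper's remark on non-reducedness. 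Since that aside is offered only as an optional double-check and the tangent-direction dichotomy already settles the existence of the two components, you can either correct it or drop it.
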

Below is the main result of the present subsection.
\begin{prp}\label{prp:acca20}
Let $S$ be a $K3$ surface with an elliptic fibration $S\to \PP^1$ as in Claim~\ref{clm:eccoell}, and let  
$\pi\colon S^{[n]}\to\PP^n$ be the associated Lagrangian fibration. Let $\cF$ be a vector bundle on $S$ as in Proposition~\ref{prp:rigsuk}. Then the following hold:
\begin{enumerate}
\item[(a)]
If   $x_1,\ldots,x_n\in \PP^1$ are pairwise distinct, then the restriction of $\cF[n]^{+}$ to
$\pi^{-1}(x_1+\ldots+x_n)$ is slope stable for any product polarization (this makes sense by the isomorphism in~\eqref{prodfibre}). 
\item[(b)]
Let $U\subset (\PP^1)^{(n)}$ be the open subset  parametrizing cycles
$x_1+\ldots+x_n$ such that the restriction of $\cF[n]^{+}$ to
$\pi^{-1}(x_1+\ldots+x_n)$ is a simple sheaf. The 
 complement  of $U$ has codimension at least two.
\end{enumerate}
\end{prp}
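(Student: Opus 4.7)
The proof of (a) proceeds in two steps, followed by the deduction of (b).

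Step 1 (identification). For pairwise distinct $x_1,\ldots,x_n$, every subscheme parametrized by $\pi^{-1}(x_1+\ldots+x_n)$ is reduced and supported on the disjoint fibers $C_{x_i}$, so this locus lies in the open subset of $S^{[n]}$ where $\gamma$ in~\eqref{commiso} is an isomorphism; the identification $\pi^{-1}(x_1+\ldots+x_n)\cong C_{x_1}\times\cdots\times C_{x_n}$ is given in~\eqref{prodfibre}. The preimage $\rho^{-1}(\pi^{-1}(x_1+\ldots+x_n))$ is then a disjoint union of $n!$ copies of this product, indexed by orderings of $x_1,\ldots,x_n$, on each of which $X_n(\cF)$ pulls back to the corresponding ordered external tensor product of the restrictions $\cF|_{C_{x_i}}$. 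Taking $\cS_n$-invariants of the pushforward yields
\begin{equation*}
\cF[n]^{+}|_{\pi^{-1}(x_1+\ldots+x_n)} \;\cong\; \cF|_{C_{x_1}}\boxtimes\cdots\boxtimes\cF|_{C_{x_n}}.
\end{equation*}

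Step 2 (stability of the box product). Each $\cF|_{C_{x_i}}$ is slope stable on the integral curve $C_{x_i}$ by Proposition~\ref{prp:rigsuk}(4), with rank $r_0$ and degree $gl\cdot d_0$; the coprimality conditions $\gcd\{l,r_0\}=1$, $\gcd\{d_0,r_0\}=1$, and $\gcd\{g,r_0\}=1$ (the last from $g\mid r_0-1$) force $\gcd(r_0,gl\cdot d_0)=1$. When all $C_{x_i}$ are smooth, $\prod_i C_{x_i}$ is an abelian variety and each factor is a simple semi-homogeneous bundle by Atiyah's classification; the box product is then simple and semi-homogeneous on this abelian variety, and Mukai's theorem on semi-homogeneous bundles yields slope stability with respect to every polarization. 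For fibers where some $C_{x_i}$ is nodal, I argue directly via the product-polarization slope formula
\begin{equation*}
\mu_L(\cF|_{C_{x_1}}\boxtimes\cdots\boxtimes\cF|_{C_{x_n}}) \;=\; (n-1)!\sum_{i=1}^n \mu(\cF|_{C_{x_i}})\prod_{j\ne i}\deg L_j.
\end{equation*}
For a saturated subsheaf $\cK$ of the box product, restriction to a generic fiber of $\prod_jC_{x_j}\to\prod_{j\ne i}C_{x_j}$ yields a subsheaf of $(\cF|_{C_{x_i}})^{\oplus N}$; slope (semi)stability of $\cF|_{C_{x_i}}$ bounds the corresponding relative slope of $\cK$ by $\mu(\cF|_{C_{x_i}})$, so $\mu_L(\cK)\le\mu_L(\text{box product})$. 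Equality in all $i$ forces every fiber restriction of $\cK$ to be a polystable subsheaf of maximal slope, i.e.\ of the form $V_i\otimes\cF|_{C_{x_i}}$ for vector subspaces $V_i$, and rigidity of each stable factor then rules out the existence of such a proper saturated $\cK$.

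For (b), part (a) implies $U$ contains the open locus $U_0\subset(\PP^1)^{(n)}$ of pairwise distinct cycles, so $(\PP^1)^{(n)}\setminus U\subseteq\cT$. The codimension-one stratum $\cT^{\circ}\subset\cT$ consists of cycles $2p+x_3+\cdots+x_n$ with $p,x_3,\ldots,x_n$ pairwise distinct and $C_p$ smooth; the fiber of $\pi$ over such a cycle is $C_p^{[2]}\times C_{x_3}\times\cdots\times C_{x_n}$, and an identification analogous to Step~1, invoking the isospectral description of $\cF[2]^{+}$ on $C_p^{[2]}=\Sym^2 C_p$, gives
\begin{equation*}
\cF[n]^{+}|_{C_p^{[2]}\times\prod_{i\ge 3}C_{x_i}} \;\cong\; \cF[2]^{+}|_{C_p^{[2]}}\boxtimes \cF|_{C_{x_3}}\boxtimes\cdots\boxtimes \cF|_{C_{x_n}}.
\end{equation*}
Simpleness of $\cF[2]^{+}|_{C_p^{[2]}}$ follows from the $\cS_2$-equivariant Künneth computation
\begin{equation*}
H^0(C_p^{[2]},\End(\cF[2]^{+}|_{C_p^{[2]}}))\;\cong\;\bigl(H^0(\End\cF|_{C_p})\otimes H^0(\End\cF|_{C_p})\bigr)^{\cS_2}\;=\;\CC,
\end{equation*}
and Künneth on the full product yields simpleness of the box product. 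Hence $\cT^{\circ}\subset U$, and the complement of $U$ has codimension at least two.

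The main obstacle is the nodal-fiber case of part (a): Mukai's semi-homogeneity theorem applies only over abelian varieties, so the direct slope-and-restriction argument must carry the entire load when some $C_{x_i}$ is singular, and the equality case in the slope inequality must be analyzed carefully to exclude destabilizing subsheaves. An alternative route would deform $(C_{x_i},\cF|_{C_{x_i}})$ over a disc from the smooth to the nodal case and exploit the compatibility of the box product with such degenerations, but the direct argument above should be more economical.
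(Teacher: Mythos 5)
Your part (a) is essentially the paper's argument, but organized less efficiently. The paper proves a single statement, Lemma~\ref{lmm:prodstab}, asserting that a box product of slope stable bundles on a product of \emph{arbitrary irreducible polarized projective varieties} is slope stable for every product polarization; since all elliptic fibers $C_{x_i}$ are irreducible (by the assumption on $\NS(S)$) and $\cF_{|C_{x_i}}$ is slope stable by Proposition~\ref{prp:rigsuk}, item (a) follows with no case division. Your separate treatment of smooth fibers via semi-homogeneity is unnecessary and, as stated, does not quite close: Mukai's results give Gieseker stability of a simple semi-homogeneous bundle, not slope stability, so you would still need a coprimality argument there. Your ``direct argument'' for singular fibers is in substance the paper's lemma, except that the equality case is not finished by ``rigidity'': one shows (as in the proof of Lemma~\ref{lmm:prodstab}) that the restrictions of the destabilizing subsheaf to horizontal and vertical slices are of the form $\cV_1\otimes W_2$ and $W_1\otimes\cV_2$, and concludes that on a big open set the subsheaf coincides with the whole box product, contradicting properness.

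Part (b) contains a genuine gap. The scheme-theoretic fiber of $\pi$ over $2p+x_3+\cdots+x_n$ is \emph{not} $C_p^{[2]}\times C_{x_3}\times\cdots\times C_{x_n}$. The factor replacing $C_p^{[2]}$ is the scheme-theoretic fiber of $S^{[2]}\to(\PP^1)^{(2)}$ over $2p$, which parametrizes all length-two subschemes of $S$ supported on $C_p$; it contains, besides $\Sym^2 C_p$, the locus of non-reduced subschemes whose tangent direction sticks out of $C_p$, and it is reducible and non-reduced (the paper records exactly this about fibers over $\cT$). Consequently your $\cS_2$-equivariant K\"unneth computation is carried out on the wrong space: simpleness of the symmetrized box product on $\Sym^2 C_p$ says nothing about $H^0$ of $\End$ on the actual fiber, and it is precisely the simpleness of the restriction of $\cF[2]^{+}$ to this reducible non-reduced scheme that carries the weight of the codimension-two statement. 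In the paper this is the nontrivial input imported from~\cite{ogfascimod} (Section~6.4 and Proposition~6.13), combined with the product decomposition~\eqref{cimbolli}; without an argument for that fiber, the complement of $U$ is only shown to avoid the pairwise-distinct locus, which does not give codimension at least two.
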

Before proving Proposition~\ref{prp:acca20}, we notice that Proposition~6.10 in~\cite{ogfascimod} holds for products of projective varieties of arbitrary dimension.
\begin{lmm}\label{lmm:prodstab}
For $i\in\{1,2\}$ let $(X_i,L_i)$ be an irreducible polarized projective variety of dimension $d_i$, and let $\cV_i$ be a slope stable vector bundle on   $X_i$. 
Then $\cV_1\boxtimes \cV_2$ is slope stable for any product polarization $\cL:=L_1^{m_1}\boxtimes L_2^{m_2}$ (of course $m_1,m_2\in\NN_{+}$).
\end{lmm}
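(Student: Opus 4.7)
The plan is to prove stability directly, by restricting a would-be destabilizing subsheaf to horizontal and vertical fibers and invoking the slope stability of $\cV_1$ and $\cV_2$ on each factor. Denote $r_i := \rk(\cV_i)$ and let $p_i\colon X_1 \times X_2 \to X_i$ be the projections. Expanding $c_1(\cL) = m_1 p_1^* c_1(L_1) + m_2 p_2^* c_1(L_2)$ by the binomial formula, only the monomials $p_1^* L_1^{d_1-1} p_2^* L_2^{d_2}$ and $p_1^* L_1^{d_1} p_2^* L_2^{d_2-1}$ pair non-trivially against $c_1(\cV_1 \boxtimes \cV_2) = r_2 p_1^* c_1(\cV_1) + r_1 p_2^* c_1(\cV_2)$, and a direct computation yields
\[
\mu_{\cL}(\cV_1 \boxtimes \cV_2) \;=\; A\, \mu_{L_1}(\cV_1) + B\, \mu_{L_2}(\cV_2),
\]
where $A := \binom{d_1+d_2-1}{d_1-1} m_1^{d_1-1} m_2^{d_2} L_2^{d_2}$ and $B := \binom{d_1+d_2-1}{d_1} m_1^{d_1} m_2^{d_2-1} L_1^{d_1}$ are positive by the ampleness of $L_1, L_2$.

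Given a saturated subsheaf $\cF \subset \cV_1 \boxtimes \cV_2$ with $0 < \rk(\cF) < r_1 r_2$, I would decompose $c_1(\cF) = p_1^*\alpha_1 + \gamma + p_2^*\alpha_2$ via the K\"unneth formula, where $\alpha_i \in H^2(X_i;\QQ)$ and $\gamma \in H^1(X_1;\QQ) \otimes H^1(X_2;\QQ)$. The class $\gamma \cdot c_1(\cL)^{d_1+d_2-1}$ has odd K\"unneth bidegree on each factor, hence contributes $0$ to the top-dimensional part, so $\deg_{\cL}(\cF) = A(\alpha_1 \cdot L_1^{d_1-1}) + B(\alpha_2 \cdot L_2^{d_2-1})$. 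For a general point $x_1 \in X_1$, the restriction $\cF_{x_1} := \cF|_{\{x_1\} \times X_2}$ is, by generic flatness, a subsheaf of $\cV_1(x_1) \otimes \cV_2 \cong \cV_2^{\oplus r_1}$ of rank $\rk(\cF)$ and with first Chern class $\alpha_2$. Since $\cV_2^{\oplus r_1}$ is $L_2$-slope semistable, $\alpha_2 \cdot L_2^{d_2-1} \le \rk(\cF)\, \mu_{L_2}(\cV_2)$, and the symmetric inequality $\alpha_1 \cdot L_1^{d_1-1} \le \rk(\cF)\, \mu_{L_1}(\cV_1)$ follows by restricting to a general $X_1 \times \{x_2\}$. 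Combining these with the weights $A, B$ gives $\mu_{\cL}(\cF) \le \mu_{\cL}(\cV_1 \boxtimes \cV_2)$, i.e., slope semistability.

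The main obstacle is excluding equality under the assumption that both $\cV_i$ are stable. Assume $\mu_{\cL}(\cF) = \mu_{\cL}(\cV_1 \boxtimes \cV_2)$; then both of the inequalities above are equalities, and $\cF_{x_1}$ is an $L_2$-semistable subsheaf of $\cV_2^{\oplus r_1}$ of slope $\mu_{L_2}(\cV_2)$. Since $\cV_2$ is stable with $\End(\cV_2) = \CC$, the Jordan--H\"older factors of $\cF_{x_1}$ are all isomorphic to $\cV_2$, and since $\cV_2^{\oplus r_1}$ is polystable, the subsheaf $\cF_{x_1}$ is itself polystable; using $\Hom(\cV_2, \cV_2^{\oplus r_1}) = \cV_1(x_1)$, it must be of the form $W(x_1) \otimes \cV_2$ for a unique subspace $W(x_1) \subset \cV_1(x_1)$ of dimension $k := \rk(\cF)/r_2$. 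Generic flatness promotes the family $W(\cdot)$ to a rank-$k$ coherent subsheaf $\cE \subset \cV_1$ on a dense open of $X_1$; matching on each generic slice $\{x_1\} \times X_2$ and comparing saturations forces $\cF = \cE \boxtimes \cV_2$. Then $\alpha_1 = r_2 c_1(\cE)$, so the equality $\alpha_1 \cdot L_1^{d_1-1} = \rk(\cF) \mu_{L_1}(\cV_1)$ becomes $\mu_{L_1}(\cE) = \mu_{L_1}(\cV_1)$ with $0 < \rk(\cE) = k < r_1$, contradicting the $L_1$-stability of $\cV_1$. This yields the desired strict inequality.
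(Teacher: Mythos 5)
Your argument is correct in substance, and its first two thirds coincide with the paper's own proof: the paper likewise writes the $\cL$-slope of a destabilizing subsheaf as the positive combination $A_1\mu_{L_1}(\,\cdot\,|_{X_1\times\{x_2\}})+A_2\mu_{L_2}(\,\cdot\,|_{\{x_1\}\times X_2})$ (your $A,B$ are exactly its $A_1,A_2$), restricts to general horizontal and vertical slices, and uses polystability of $\cV_1\otimes\CC^{r_2}$ and $\cV_2\otimes\CC^{r_1}$ to force the slice slopes to equal $\mu_{L_i}(\cV_i)$ and to identify the restricted subsheaf (after saturating on the slice) with $W_1\otimes\cV_2$, resp. $\cV_1\otimes W_2$. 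Where you genuinely diverge is the endgame. The paper stays fiberwise: at a point of the open set where the subsheaf is a subbundle, its fiber is simultaneously of the form $W_1\otimes V_2$ and $V_1\otimes W_2$, and such a subspace of $V_1\otimes V_2$ is forced to be all of $V_1\otimes V_2$, contradicting the rank hypothesis; no descent to $X_1$ is needed. You instead globalize the vertical-slice data into a subsheaf $\cE\subset\cV_1$ and contradict the stability of $\cV_1$. That route does work, but the sentence ``generic flatness promotes $W(\cdot)$ to $\cE\subset\cV_1$ \dots forces $\cF=\cE\boxtimes\cV_2$'' is the one place needing real care: $\cF|_{\{x_1\}\times X_2}$ is of the form $W(x_1)\otimes\cV_2$ only up to saturation, and the cheap inclusion $\cE\boxtimes\cV_2\subseteq\cF$ (say with $\cE=p_{1*}\mathscr{H}om(p_2^{*}\cV_2,\cF)$) gives only $\mu_{L_1}(\cE)\le\mu_{L_1}(\cV_1)$, which is no contradiction — you really need the codimension-one identification $c_1(\cF)=c_1(\cE\boxtimes\cV_2)$. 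This can be supplied: take $\cE$ saturated in $\cV_1$, so that $\cE\boxtimes\cV_2$ is saturated in $\cV_1\boxtimes\cV_2$ (its quotient $(\cV_1/\cE)\boxtimes\cV_2$ is torsion free), observe that $\cF$ and $\cE\boxtimes\cV_2$ have the same fibres on a dense open subset of $X_1\times X_2$, and use that two saturated subsheaves of a locally free sheaf which agree on a dense open set are equal. With that step filled in your proof is a valid variant of the paper's: the paper's fiberwise tensor-decomposition finish is shorter because it avoids descent altogether, while your version has the mild advantage of exhibiting the destabilizing subsheaf of $\cV_1$ explicitly.
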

\begin{proof}
Suppose that there exists an injection $\alpha\colon \cE\to \cV_1\boxtimes \cV_2$  with  torsion free cokernel such that $0<r(\cE)<r(\cV_1\boxtimes \cV_2)$ and
\begin{equation}\label{destabo}
\mu_{\cL}(\cE)\ge \mu_{\cL}(\cV_1\boxtimes\cV_2).
\end{equation}
The open subset $U\subset X_1\times X_2$ of points $p$ at which $\alpha$ is an injection of vector bundles (i.e.~the stalk of $\cE$ at $p$  is free and   $\alpha$ defines an injection of the fiber of $\cE$ at  $p$ to the fiber  of 
$\cV_1\boxtimes \cV_2$ at $p$) has complement of codimension at least $2$. Let $p=(x_1,x_2)\in U$. 
The restrictions of $\alpha$ to $\{x_1\}\times X_2$ and  to $X_1\times \{x_2\}$    are generically injective maps of vector bundles. Let
\begin{equation*}
A_1:=m_1^{d_1-1} m_2^{d_2}\deg X_2 {d_1+d_2-1\choose d_2},\quad 
A_2:=m_1^{d_1} m_2^{d_2-1}\deg X_1 {d_1+d_2-1\choose d_1}.
\end{equation*}
We have
\begin{equation}\label{sanmartino}
\mu_{\cL}(\cE)=A_1\mu_{L_1}(\cE_{|{X_1}\times \{x_2\}})+A_2 \mu_{L_2}(\cE_{|\{x_1\}\times X_2}),
\end{equation}
and 
\begin{equation}\label{pendab}
\mu_{\cL}(\cV_1\boxtimes\cV_2)=
A_1 \mu_{L_1}(\cV_1)+ A_2\mu_{L_2}(\cV_2).  
\end{equation}
Since the restrictions of $\cV_1\boxtimes\cV_2$  to $X_1\times \{x_2\}$ and to $\{x_1\}\times X_2$ are isomorphic to the polystable vector bundles   $\cV_1\otimes_{\CC}\CC^{r(\cV_2)}$ and $\cV_1\otimes_{\CC}\CC^{r(\cV_1)}$ respectively,  it follows from~\eqref{destabo}, \eqref{sanmartino} and~\eqref{pendab}  that $\mu(\cE_{|X_1\times \{x_2\}})=\mu(\cV_1)$ and $\mu(\cE_{|\{x_1\}\times X_2})=\mu(\cV_2)$. In turn, these equalities give that there exist vector subspaces 
$0\not=W_i\subset \CC^{r(\cV_i)}$  such that 
$\cE_{|X_1\times \{x_2\}}=\cV_1\otimes_{\CC} W_2$ on $U\cap(X_1\times \{x_2\})$   and 
$\cE_{|\{x_1\}\times X_2}=W_1\otimes_{\CC}\cV_2$ on  $U\cap(\{x\}_1\times X_2)$. It follows that 
$\im\alpha_{|U}=(\cV_1\boxtimes\cV_2)_{|U}$. This is a contradiction.
\end{proof}

\begin{proof}[Proof of Proposition~\ref{prp:acca20}]
(a): Follows from the stability of the restriction of $\cF$ to any elliptic fiber of $S\to\PP^1$ (Proposition~\ref{prp:rigsuk}), 
and Lemma~\ref{lmm:prodstab}.

\noindent
(b): Let $V\subset (\PP^1)^{(n)}$ be the open subset parametrizing  cycles
$\Gamma:=d_1 x_1+\ldots+ d_m x_m$ such that $d_j\le 2$ for all $j\in\{1,\ldots,m\}$, and  $C_{x_j}$ is smooth if $d_j=2$. If $\Gamma$ is such a cycle then 
the restriction of $\cF[n]^{+}$ to
$\pi^{-1}(\Gamma)$ is a simple sheaf. In fact $\pi^{-1}(\Gamma)$ is a product of schemes 
${\bm C}_1\times\ldots\times {\bm C}_m$, where $ {\bm C}_j=C_{x_j}$ if $d_j=1$, while  if $d_j=2$ then $ {\bm C}_j$ is identified with the scheme theoretic fiber over $2x_j\in(\PP^1)^{(2)}$ of the Lagrangian fibration 
$S^{[2]}\to(\PP^1)^{(2)}$.  Moreover 
\begin{equation}\label{cimbolli}
\cF[n]^{+}_{|\pi^{-1}(\Gamma)}\cong \left(\cF_{|{\bm C}_1}\right)\boxtimes\ldots\boxtimes\left(\cF_{|{\bm C}_m}\right).
\end{equation}
If $d_j=1$ then $ {\bm C}_j=C_{x_j}$ and $\cF_{|{\bm C}_j}$ is simple by Proposition~\ref{prp:rigsuk}. 
If $d_j=2$ then $ {\bm C}_j$ is the scheme considered in~\cite[Sect.~6.4]{ogfascimod}, and  
$\cF_{|{\bm C}_j}$ is simple by~\cite[Proposition 6.13]{ogfascimod}. By the isomorphism in~\eqref{cimbolli} it follows that 
$\cF[n]^{+}_{|\pi^{-1}(\Gamma)}$ is simple. Since the complement of $V$ in $(\PP^1)^{(n)}$ is a (closed) subset of codimension at least two, this proves Item~(b).
\end{proof}
\begin{rmk}\label{rmk:halloween}
By Item~(a) of Proposition~\ref{prp:acca20} the restriction of $\cF[n]^{+}$ to a (singular) Lagrangian fiber $X_t$ parametrized by a general point  
$t\in\cD(b_j)$ (notation as in~\eqref{decompdiscr}) is slope stable for any product polarization. 
\end{rmk}

The following remarks place Item~(a) of Proposition~\ref{prp:acca20} in the context of known results. 
\begin{rmk}\label{rmk:stabonlagfib}
Let $X\to\PP^n$ be  a Lagrangian fibration of a  HK manifold.  
For $t\in\PP^n$ let $X_t:=\pi^{-1}(t)$ be the schematic fiber of $X\to\PP^n$ over $t$. If $X_t$ is smooth there exists an ample primitive
class $\theta_t\in H^{1,1}_{\ZZ}(X_t)$ such that  
the image of the restriction map $H^2(X;\ZZ)\to H^2(X_t;\ZZ)$ is contained in $\ZZ\theta_t$,   see~\cite{wieneck1}.  If $\cF$ is a sheaf on $X_t$ slope-(semi)stability of $\cF$ will always mean $\theta_t$ slope-(semi)stability.
\end{rmk}
\begin{rmk}\label{rmk:k3nppav}
Let $X\to\PP^n$ be  a Lagrangian fibration of a  HK manifold of type $K3^{[n]}$, and let $X_t$ be a smooth Lagrangian fiber.  
Then the primitive ample
class $\theta_t\in H^{1,1}_{\ZZ}(X_t)$ is a principal polarization of $X_t$, see ~\cite{wieneck1}. If $S^{[n]}\to\PP^n$ is the Lagrangian fibration in~\eqref{fiblagk3n}, and $\pi^{-1}(x_1+\ldots+x_n)\cong C_{x_1}\times\ldots C_{x_n}$ is a smooth Lagrangian fiber, then 
$\theta_{x_1+\ldots+x_n}$ is the  product principal polarization.
\end{rmk}
\section{Proof of Theorem~\ref{thm:unicita} and Proposition~\ref{prp:rangociuno}}\label{sec:spagnesi}
\subsection{Contents of the section}
\setcounter{equation}{0}
In the present section we prove the following two  statements.
\begin{prp}\label{prp:stabesiste}
Let $n,r_0,g,l,e,i$ be as in Theorem~\ref{thm:unicita}. There exists an irreducible component  $\cK^{il}_e(2n)^{\rm good}$ 
of $\cK^{il}_e(2n)$ such that for a  general polarized HK variety 
 $(X,h)$  parametrized by  $\cK^{il}_e(2n)^{\rm good}$  there exists an $h$ slope-stable vector bundle $\cE$ on $X$ such that the equalities in~\eqref{chernvbe} hold
and moreover $H^p(X,End^0(\cE))=0$ for all $p$. 
\end{prp}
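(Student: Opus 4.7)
The plan is to follow the roadmap of Subsection~\ref{sec:ideebase}, assembling the ingredients prepared in Sections~\ref{sec:fibratifond} and~\ref{sec:hilbdielle} and then running a two-step deformation argument to reach the generic point of an irreducible component of $\cK^{il}_e(2n)$.

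First I would produce a special model. Choose an elliptic $K3$ surface $S\to\PP^1$ as in Claim~\ref{clm:eccoell} together with the spherical slope stable vector bundle $\cF$ supplied by Proposition~\ref{prp:rigsuk}, calibrated to the numerical data of Theorem~\ref{thm:unicita} via the formulas in~\eqref{chartroux}. Setting $\cE_0:=\cF[n]^{+}$ on $S^{[n]}$, Claim~\ref{clm:classigiuste} yields the required rank, first Chern class and discriminant of~\eqref{chernvbe}, while the McKay vanishing~\eqref{svanimento} gives $H^p(S^{[n]},\End^0\cE_0)=0$ for all $p$. By Remark~\ref{rmk:siestende} the bundle $\cE_0$ then extends, uniquely, to every small deformation of $S^{[n]}$ that keeps $c_1(\cE_0)$ of type $(1,1)$. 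To globalize, note that the class $h_0\in\NS(S^{[n]})$ of~\eqref{nemequittepas} is primitive of square $e$ and divisibility $il$, so $[(S^{[n]},h_0)]\in\cK^{il}_e(2n)$; let $\cK^{il}_e(2n)^{\rm good}$ be the irreducible component containing it. Spreading the extension over a neighborhood by versality, I obtain on a Zariski open subset of $\cK^{il}_e(2n)^{\rm good}$ a family of polarized HK varieties $(X,h)$ each equipped with a vector bundle $\cE$ satisfying the Chern class equalities~\eqref{chernvbe}; upper semicontinuity of $h^p(X,\End^0\cE)$ combined with $\chi(X,\End^0\cE)=0$ propagates the cohomological vanishing generically.

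The heart of the argument is slope stability. At the initial point I would use the Lagrangian fibration $\pi\colon S^{[n]}\to\PP^n$: by Proposition~\ref{prp:acca20}(a) the restriction of $\cE_0$ to a general Lagrangian fiber is slope stable, while Proposition~\ref{prp:acca20}(b) confines failure of simplicity to a locus of codimension at least two. The criterion highlighted in Subsection~\ref{sec:ideebase}, that fiberwise slope stability forces slope stability of the total bundle whenever the polarization lies close to the pull-back of an ample class from the base, then gives that $\cE_0$ is slope stable on $S^{[n]}$ for a Kähler class $h_0'$ in the Lagrangian chamber. To transport this to a generic polarization $h$ with the assigned $(e,il)$ invariants I would proceed in two stages: first deform the complex structure within the Lagrangian Noether-Lefschetz locus of $\cK^{il}_e(2n)^{\rm good}$, choosing the discriminant large enough that the restriction to the general Lagrangian abelian variety remains simple and semi-homogeneous, so that the rigidity of such vector bundles prevents new destabilizing subsheaves from appearing in the limit; then deform the polarization from the Lagrangian chamber to a class with $q(h)=e$ and $\divisore(h)=il$, using the extension to higher dimensional HK manifolds of the chamber decomposition for modular sheaves developed in~\cite{ogfascimod}.

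The main obstacle is precisely this last polarization deformation. Inside each open slope-stability chamber the conclusion is automatic, so the delicate part is to show that no wall crossed along the path from the Lagrangian chamber to the target chamber destroys stability of $\cE$. I would argue that a destabilizing subsheaf $\cG\subset\cE$ on a wall would, by propagation along the Lagrangian family, produce a destabilizing subsheaf of the restriction of $\cE$ to a general Lagrangian fiber, contradicting the fiberwise stability already established, or else would contribute a non trivial infinitesimal deformation with fixed determinant, contradicting the vanishing $H^1(X,\End^0\cE)=0$. Combining this stability across the two deformation stages with the cohomological vanishing of the first paragraph produces the slope stable vector bundle $\cE$ required at a general $(X,h)\in\cK^{il}_e(2n)^{\rm good}$.
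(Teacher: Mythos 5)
Your first stage (the model $\cF[n]^{+}$ on $S^{[n]}$, the component through $[(S^{[n]},h_0)]$, extension of the bundle by rigidity, semicontinuity of $h^p(\End^0)$) matches the paper. The gap is in how you get slope stability with respect to the polarization $h$ itself. You only establish stability of $\cE_0$ for a class $h_0'$ close to the Lagrangian fibre class $f_0$, and then propose to ``deform the polarization from the Lagrangian chamber to $h$'', claiming that no wall crossed along the way can destroy stability because a destabilizing subsheaf would either destabilize the restriction of $\cE$ to a general Lagrangian fibre or contradict $H^1(X,\End^0\cE)=0$. Neither argument works: a subsheaf that destabilizes $\cE$ for a polarization far from $f$ need not restrict to a destabilizing subsheaf on a Lagrangian fibre (fibrewise stability controls stability only for \emph{suitable} polarizations, i.e.\ those in the chamber adjacent to the Lagrangian class --- that is exactly the content of $a$-suitability in \cite{ogfascimod}), and rigidity of $\cE$ says nothing about the absence of destabilizing subsheaves. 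Since slope stability really does change across walls of the chamber decomposition, this wall-crossing step, as formulated, fails, and with it the whole transport of stability to $h$.

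The paper never moves the polarization. The class $h$ with $q(h)=e$ and $\divisore(h)=il$ is fixed from the start, and the key quantitative point is that on the Noether--Lefschetz divisor $\NL(d_0)$, with $d_0$ coprime to $r_0$ and $ild_0$ large enough (inequality~\eqref{psicologa}), Lemma~\ref{lmm:opportuno} and Proposition~\ref{prp:piazzolla} show that at a \emph{general} point $[(X,h)]\in\NL(d_0)$, where $\NS(X)=\langle h,f\rangle$, the polarization $h$ itself is $a(\cE)$-suitable; hence slope stability of the restriction of $\cE_X$ to a general Lagrangian fibre (Proposition~\ref{prp:fibvettnl}) yields $h$ slope stability of $\cE_X$ directly by \cite[Proposition~3.6]{ogfascimod}. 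Note that this is done at a general point of $\NL(d_0)$ and not on $S^{[n]}$ itself, where extra classes in $\NS(S^{[n]})$ (e.g.\ $\delta_n$) could violate suitability of $h_0$. One then leaves the divisor: since $H^p(X,\End^0\cE_X)=0$ and $c_1(\cE_X)$ is a multiple of $h$, the bundle deforms along every small deformation of $(X,h)$, and openness of slope stability together with semicontinuity of cohomology give the statement at the general point of $\cK^{il}_e(2n)^{\rm good}$. The missing idea in your write-up is precisely this suitability of $h$ on the large-discriminant Noether--Lefschetz locus, which replaces any path in the ample cone and the problematic wall-crossing argument.
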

\begin{prp}\label{prp:stabunico}
Let $n,r_0,g,l,e,i$ be as in Theorem~\ref{thm:unicita}. If   $[(X,h)]\in \cK^{il}_e(2n)^{\rm good}$ 
is a general point, then   there exists a unique $h$ slope-stable vector bundle $\cE$ on $X$ such that 
 the equalities in~\eqref{chernvbe} hold.
\end{prp}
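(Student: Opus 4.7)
The strategy is to reduce uniqueness on a general polarized HK in $\cK^{il}_e(2n)^{\rm good}$ to uniqueness at a Lagrangian specialization of the form $(S^{[n]},h)$ from Section~\ref{sec:hilbdielle}, and on that Lagrangian HK to exploit the restriction to the (abelian) Lagrangian fibers together with a monodromy argument on the relative $\Pic^{0}$.

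\smallskip

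\noindent\textbf{Step 1 (Specialization).} The bundle $\cE_0=\cF[n]^{+}$ of Proposition~\ref{prp:stabesiste} satisfies $H^p(End^0(\cE_0))=0$, so by Artamkin--Mukai (Remark~\ref{rmk:siestende}) it deforms uniquely along any small deformation of its polarized base keeping $c_1$ of type $(1,1)$. Hence the forgetful map from the component $\cM^{\circ}$ of the moduli of slope-stable bundles containing $\cE_0$ to $\cK^{il}_e(2n)^{\rm good}$ is \'etale. Any second slope-stable $\cE$ on a general $(X,h')\in\cK^{il}_e(2n)^{\rm good}$ with the invariants in~\eqref{chernvbe} lies in some (possibly other) component of the moduli; by \'etaleness together with irreducibility of $\cK^{il}_e(2n)^{\rm good}$, it can be specialized along a path within $\cK^{il}_e(2n)^{\rm good}$ to a slope-stable bundle $\cE_{\rm sp}$ on a Lagrangian specialization $(S^{[n]},h)$, with $\cE\cong\cE_0$ if and only if $\cE_{\rm sp}\cong\cE_0$. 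It therefore suffices to prove that on $(S^{[n]},h)$ the only slope-stable vector bundle with invariants~\eqref{chernvbe} is $\cE_0=\cF[n]^{+}$.

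\smallskip

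\noindent\textbf{Step 2 (Restriction to abelian fibers).} Using the chamber decomposition for modular sheaves (Section~\ref{sec:ideebase}), move $h$ within its slope-stability chamber to a polarization $h''$ arbitrarily close to the pullback $\pi^{*}a$ of an ample class from the Lagrangian base $\PP^n$. The stability-on-fibers results for Lagrangian HK manifolds (Section~\ref{sec:ideebase}) then guarantee that the restriction to a general smooth Lagrangian fiber $X_t$ of any $h''$ slope-stable vector bundle is slope-semistable, and is semi-homogeneous whenever slope-stable. Combined with Proposition~\ref{prp:acca20} applied to $\cE_0$, both $\cE_{\rm sp}|_{X_t}$ and $\cE_0|_{X_t}$ are simple semi-homogeneous vector bundles on the principally polarized abelian variety $X_t$ (Remark~\ref{rmk:k3nppav}) for $t$ in an open $U\subset\PP^n$ with $\cod(\PP^n\setminus U)\ge 2$.

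\smallskip

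\noindent\textbf{Step 3 (Rigidity and monodromy).} By Mukai's rigidity of simple semi-homogeneous bundles on abelian varieties, two such bundles with equal ranks and cohomologous determinants differ by tensoring with an element of $\Pic^{0}$. For each $t\in U$ this produces a unique $L_t\in\Pic^{0}(X_t)$ with $\cE_{\rm sp}|_{X_t}\cong\cE_0|_{X_t}\otimes L_t$, and the assignment $t\mapsto L_t$ defines a regular section $\sigma$ of the relative Picard $\Pic^{0}(S^{[n]}/U)\to U$. A monodromy argument, exploiting that over a dense open of $\PP^{n}$ the fibration $S^{[n]}\to\PP^{n}$ is the symmetric quotient of the product of $n$ elliptic fibrations $S\to\PP^{1}$, shows that the monodromy on $H^{1}(X_t,\ZZ)$ has no non-trivial $\pi_1(U,t)$-invariants; combined with $\Pic^{0}(S^{[n]})=0$ (since $S^{[n]}$ is HK), this forces $\sigma\equiv 0$, so $L_t\cong\cO_{X_t}$ for general $t$. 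Reflexivity of the local homomorphism sheaves together with the codimension-$\ge 2$ property of $U$ then gives $\cE_{\rm sp}\cong\cE_0$, completing the proof.

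\smallskip

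\noindent\textbf{Main obstacle.} The critical step is Step~3: producing enough monodromy on the relative $\Pic^{0}$ over $U$ to kill non-trivial sections $\sigma$. A natural approach is to exhibit loops in $U$ along which the monodromy on $H^{1}(X_t,\ZZ)$ acts without fixed vectors---for instance, exploiting the monodromy of the smooth locus of $S\to\PP^{1}$, which generates a large subgroup of $\SL_{2}(\ZZ)$ acting on $H^{1}$ of each factor of $X_t=C_{x_1}\times\cdots\times C_{x_n}$, and then descending to the symmetric quotient.
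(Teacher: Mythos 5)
Your overall strategy (restrict to abelian Lagrangian fibers, use Mukai's theory of simple semi-homogeneous bundles plus monodromy, then extend an isomorphism across a codimension~$\ge 2$ locus) is the same circle of ideas as the paper, but two of your steps have genuine gaps.

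Step 1 is the most serious one. You assume that an arbitrary $h'$ slope-stable bundle $\cE$ on a general $[(X,h')]\in\cK^{il}_e(2n)^{\rm good}$ can be ``specialized along a path'' to a slope-stable bundle $\cE_{\rm sp}$ on the Lagrangian point $(S^{[n]},h)$, with $\cE\cong\cE_0$ iff $\cE_{\rm sp}\cong\cE_0$. There is no properness available here: families of slope-stable vector bundles are not closed under specialization (limits can fail to be locally free, or fail to be stable), and the \'etaleness you invoke only concerns the component of the moduli space containing $\cE_0$, hence says nothing about the component containing $\cE$. The paper argues in the opposite direction: it proves uniqueness at a \emph{general point of each Noether--Lefschetz divisor} $\NL(d_0)$ for infinitely many $d_0$ (coprime to $r_0$, large), and then uses Maruyama's boundedness to get a finite-type relative moduli space $\cM(r_0,g)\to T^{il}_e(2n)$ with constructible image, so that uniqueness on the Zariski-dense union $\bigcup_d \NL(d)$ forces uniqueness at the general point of the component. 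Some replacement for your specialization claim along these lines is needed.

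Step 3 also has a gap, in two respects. First, Mukai's rigidity does not give a \emph{unique} $L_t$: for two simple semi-homogeneous bundles of the same rank and determinant, the set $V_t$ of $[\xi]\in\wh{X}_t$ with $\cE_{\rm sp}|_{X_t}\cong\cE_0|_{X_t}\otimes\xi$ has cardinality $r_0^{2n}$ and lies in $X_t[r_0^n]$; so what monodromy preserves is a finite coset, not a section of the relative $\Pic^0$, and ``no nonzero monodromy invariants in $H^1$'' is not enough to conclude. One needs that any invariant coset of cardinality $r_0^{2n}$ in $X_t[r_0^n]$ equals $X_t[r_0]$ (hence contains $0$), which the paper deduces from \emph{transitivity} of the monodromy on the nonzero elements of $H_1$ of the fiber. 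Second, that transitivity is available only after moving to a general point of $\NL(d_0)$, where by Markman's theorem the fibration is a Tate--Shafarevich twist of $\cJ(S)\to|D|$ for a general polarized $K3$ of genus $n$; on $S^{[n]}$ of the elliptic $K3$ itself, where you propose to work, the monodromy group is essentially of the form $\SL_2(\ZZ)^n\rtimes\gS_n$ and does not act transitively on nonzero elements of $H_1(C_{x_1}\times\cdots\times C_{x_n};\ZZ)$, so your descent ``from the product of elliptic fibrations'' does not yield the needed statement without substantial extra work (and there the Picard rank is $3$ and the fibers over the locus $\cT$ are non-reduced, which also complicates your chamber/suitability step). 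Finally, note that passing from slope semistability to stability of the restriction to a smooth fiber requires the coprimality argument on semi-homogeneous bundles (the analogue of Corollary~\ref{crl:caravilla}), which your Step 2 silently assumes.
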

The same exact argument given  in the \lq\lq Proof of Theorem~1.4\rq\rq\ on p.~30 of~
\cite{ogfascimod} shows that
Theorem~\ref{thm:unicita} follows from Propositions~\ref{prp:stabesiste} and~ \ref{prp:stabunico}.

In Subsections~\ref{subsec:esistestab} and~\ref{subsec:poladatto} we prove results that are used in the proof of Propositions~\ref{prp:stabesiste} and~\ref{prp:stabunico}. Proposition~\ref{prp:stabesiste} is proved in Subsection~\ref{subsec:dimoesiste}. Subsections~\ref{subsec:erwitt} and~\ref{subsec:chebello} contain  results that are used in the proof of Proposition~\ref{prp:stabunico}. 
Propositions~\ref{prp:stabunico} and~\ref{prp:rangociuno} are proved in Subsections~\ref{subsec:alfinlaprova} and~\ref{subsec:restrizioni} respectively.
\subsection{The relevant component of $\cK^{il}_e(2n)$, and Noether-Lefschetz divisors}\label{subsec:esistestab}
\setcounter{equation}{0}
Let $S$ be an elliptic $K3$ surface as in Claim~\ref{clm:classigiuste}, and let $C,D$ be divisor classes generating 
$\NS(S)$ as in loc.~cit.  Let $X_0=S^{[n]}$, and let 
\begin{equation}\label{semundici}
X_0\overset{\pi_0}{\lra}(\PP^1)^{(n)}=\PP^n
\end{equation}
be the Lagrangian fibration associated to the elliptic fibration of $S$, see~\eqref{fiblagk3n}. 
Let
\begin{equation}\label{tuttipazzipermary}
h_0:=il\mu(\cl(D))-i\frac{r_0-1}{2g}\delta_n,\quad f_0:=\mu(\cl(C))=c_1(\pi_0^{*}\cO_{\PP^n}(1)).
\end{equation}
\begin{dfn}\label{dfn:compbuona}
Let  $\cK^{il}_e(2n)^{\rm good}\subset  \cK^{il}_e(2n)$ be the irreducible component containing $[(S^{[n]},h_0)]$.  
\end{dfn}
Let $d_0=C\cdot D$ (see~\eqref{neronsevero}). The sublattice $\la f_0,h_0\ra\subset H^{1,1}_{\ZZ}(X_0)$ is saturated and
\begin{equation}\label{edizero}
q(f_0)=0,\quad q(h_0,f_0)=ild_0,\quad q(h_0)=e.
\end{equation}
(The last equality follows from~\eqref{chartroux}.) Let  $L_0,F_0$  be the line bundles on $X_0$ such that $c_1(L_0)=h_0$ and $c_1(F_0)=f_0$. 
\begin{dfn}\label{dfn:eccobi}
Let $\varphi\colon\cX\to B$ be an (analytic) contractible representative of the functor $\Def(X_0,L_0,F_0)$. 
Let  $0\in B$ be the base point, so that $X_0=\varphi^{-1}(0)\cong S^{[n]}$. For $b\in B$ we let $X_b:=\varphi^{-1}(b)$, and we let $L_b,F_b$ be the line bundles on $X_b$ which are deformations of $L_0,F_0$ respectively. We let $h_b:=c_1(L_b)$ and $f_b:=c_1(F_b)$. 
\end{dfn}
Our first observation is that if  $d_0$ is large enough, then $h_b$ is ample for a general $b\in B$. 
Before proving this, we recall the following elementary result.
\begin{lmm}[Lemma~4.3 in~\cite{ogfascimod}]\label{lmm:nocamere}
Let $(\Lambda,q)$ be a non degenerate rank $2$ lattice which represents $0$, and hence $\disc(\Lambda)=-d^2$ where $d$ is a strictly positive integer. Let $\alpha\in \Lambda$ be primitive isotropic, and complete it to a basis 
$\{\alpha,\beta\}$ such that $q(\beta)\ge 0$. If $\gamma\in\Lambda$ has strictly negative square (i.e.~$q(\gamma)<0$) then
\begin{equation}\label{menoenne}
 q(\gamma)\le -\frac{2d}{1+q(\beta)}.
\end{equation}
\end{lmm}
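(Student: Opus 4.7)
The plan is to work in the basis $\{\alpha,\beta\}$ and reduce the statement to a very short integer optimization. As a first step I would normalize the Gram matrix: because $\alpha$ is primitive isotropic, the Gram matrix of $q$ in $\{\alpha,\beta\}$ has the form $\begin{pmatrix}0 & k\\ k & q(\beta)\end{pmatrix}$ with $k:=q(\alpha,\beta)\in\ZZ$, and its determinant $-k^{2}$ must equal $-d^{2}$, forcing $k=\pm d$. Replacing $\beta$ by $-\beta$ if necessary — which preserves $q(\beta)\ge 0$ — I may assume $k=d$.

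Writing $\gamma=x\alpha+y\beta$ with $x,y\in\ZZ$ and setting $b:=q(\beta)\ge 0$, one has $q(\gamma)=2xyd+y^{2}b$. Since $q(\gamma)<0$ forces $y\ne 0$, after changing the sign of $\gamma$ I assume $y\ge 1$; then necessarily $x\le -1$. Setting $x':=-x\ge 1$ and $m:=2x'd-yb$, one gets $-q(\gamma)=ym$ with $m$ a positive integer. The key observation is that
\begin{equation*}
m+yb\;=\;2x'd
\end{equation*}
is a \emph{positive} integer multiple of $2d$, and hence $m\ge \max\{1,\;2d-yb\}$.

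The lemma will then follow from a two-case split. If $y(1+b)\ge 2d$, then $-q(\gamma)=ym\ge y\ge 2d/(1+b)$, and we are done. In the opposite range $y(1+b)<2d$, one has $2d-yb>y\ge 1$, so $-q(\gamma)\ge y(2d-yb)$, and the algebraic identity
\begin{equation*}
(1+b)\,y(2d-yb)-2d\;=\;2d(y-1)+by\bigl(2d-y(1+b)\bigr)
\end{equation*}
exhibits the left-hand side as a sum of two manifestly non-negative terms (using $y\ge 1$ and the case hypothesis), yielding $-q(\gamma)\ge 2d/(1+b)=2d/(1+q(\beta))$, which is exactly the claim.

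No single step is a genuine obstacle: the whole argument is elementary linear algebra together with one small algebraic identity that makes the sharp constant $2d/(1+q(\beta))$ fall out. The only point requiring care is that the bound must be established for \emph{every} $\gamma$ of negative square, not merely for a minimizer; this is precisely why the reduction to the positive integer $m$ together with the divisibility relation $m+yb\equiv 0\pmod{2d}$ is the correct framing.
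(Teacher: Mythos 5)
Your argument is correct: the normalization $k=q(\alpha,\beta)=\pm d$ with the sign fixed by $\beta\mapsto -\beta$, the reduction to $-q(\gamma)=ym$ with $m=2x'd-yb\ge\max\{1,\,2d-yb\}$, and the two-case estimate all check out (in particular the identity $(1+b)y(2d-yb)-2d=2d(y-1)+by\bigl(2d-y(1+b)\bigr)$ is verified and both terms are non-negative under the case hypothesis $y(1+b)<2d$ and $y\ge 1$, $b\ge 0$). Note, however, that this paper does not contain a proof to compare against: the lemma is imported verbatim as Lemma~4.3 of~\cite{ogfascimod}, so the most one can say is that your elementary lattice computation is self-contained and of the same nature as the argument one expects (and finds) in that reference, namely writing $\gamma=x\alpha+y\beta$, using $q(\gamma)=2xyd+y^2q(\beta)$ and the integrality of $x,y$ to bound $-q(\gamma)$ from below by $2d/(1+q(\beta))$. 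One small stylistic remark: your divisibility phrasing ("$m+yb$ is a positive multiple of $2d$") is only used through the inequality $m\ge 2d-yb$, so you could state just that inequality; the congruence itself plays no further role.
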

\begin{prp}\label{prp:ampioperbigen}
Keep notation as above, and assume that
\begin{equation}\label{chiarello}
ild_0>(n-1)^2(n+3)(e+1).
\end{equation}
Then $L_b$ is ample on $X_b$ for a general $b\in B$. 
\end{prp}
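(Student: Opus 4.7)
The plan is to prove that for very general $b\in B$ the class $h_b\in\NS(X_b)$ is K\"ahler; since $q(h_b)=e>0$, this gives ampleness of $L_b$ via Huybrechts's projectivity criterion. The strategy combines the lattice bound of Lemma~\ref{lmm:nocamere} with the known lower bound on BBF-squares of wall-defining (MBM) classes on HK manifolds of type $K3^{[n]}$.

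First, since $B$ represents $\Def(X_0,L_0,F_0)$, for very general $b\in B$ the Picard lattice $\NS(X_b)$ has rank $2$ and equals $\Lambda:=\la f_b,h_b\ra$, which is isometric to $\la f_0,h_0\ra\subset H^{1,1}_{\ZZ}(X_0)$. By~\eqref{edizero}, $\disc(\Lambda)=-(ild_0)^2$. Applying Lemma~\ref{lmm:nocamere} with $\alpha=f_b$ (primitive isotropic), $\beta=h_b$ (so $q(\beta)=e$), and $d=ild_0$, every $\gamma\in\Lambda$ with $q(\gamma)<0$ satisfies
\[
q(\gamma)\ \leq\ -\frac{2\,ild_0}{1+e}.
\]
The hypothesis $ild_0>(n-1)^2(n+3)(e+1)$ then forces $q(\gamma)<-2(n-1)^2(n+3)$.

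Second, I will invoke the classification (after Bayer--Hassett--Tschinkel, Mongardi, Amerik--Verbitsky) of MBM classes on HK manifolds of type $K3^{[n]}$: every primitive MBM class has BBF-square bounded below by an explicit constant depending only on $n$, and the factor $(n-1)^2(n+3)$ in the hypothesis is tailored so that $-2(n-1)^2(n+3)$ lies strictly below this bound. Consequently $\NS(X_b)$ contains no MBM class, so no wall crosses $\NS(X_b)_{\RR}$. Since $h_0$ is ample on $X_0=S^{[n]}$, the deformation $h_b$ stays in the positive cone of $X_b$ for $b$ close to $0$, and the absence of walls places $h_b$ in the interior of the K\"ahler cone; hence $L_b$ is ample. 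Openness of ampleness then upgrades the conclusion from very general to general $b\in B$.

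The main obstacle is identifying the precise bound on MBM-squares for $K3^{[n]}$ that matches the numerical factor $(n-1)^2(n+3)$; once that bound is in hand, the remainder of the argument is a direct comparison with the estimate produced by Lemma~\ref{lmm:nocamere}.
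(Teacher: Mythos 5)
Your proposal follows essentially the same route as the paper: apply Lemma~\ref{lmm:nocamere} to the rank two lattice $\la h_b,f_b\ra$ for very general $b$ to exclude all classes of square in $[-2(n-1)^2(n+3),0)$, conclude via the bound on wall (MBM) classes that the ample cone is the full positive cone in $\NS(X_b)$, fix the sign of $h_b$ by degenerating to $b=0$, and finish by openness of ampleness. The one ingredient you left open is exactly what the paper supplies: the bound comes from~\cite[Corollary~2.7]{mongardi-walls} together with the observation that an extremal-ray generator $R$, viewed in $H^2(X_b,\QQ)$ by duality, becomes integral after multiplication by $2(n-1)$ (since all divisibilities divide $2n-2$), which yields precisely the threshold $-2(n-1)^2(n+3)$.
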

\begin{proof}
Let $b\in B$ be a very general point, in the sense that $\NS(X_b)=\la h_b,f_b\ra$. 
By the inequality in~\eqref{chiarello} and  Lemma~\ref{lmm:nocamere}  there are no  $\xi\in \NS(X_b)$ such that $-2(n-1)^2(n+3)\le q(\xi)<0$.
By~\cite[Corollary~2.7]{mongardi-walls} it follows that
 the  ample cone of $X_b$ is equal to the intersection of $\NS(X_b)$ and the positive cone (if $R$ is the integral generator of an extremal ray then, viewed by duality as an element of $H^2(K3^{[n]},\QQ)$, 
the multiple $2(n-1) R$ is integral because
 the divisibility of any element of $H^2(X_b,\ZZ)$ is a divisor of $2n-2$). Hence either $h_b$ or $-h_b$ is ample. By considering the limit case $b=0$ we get that $h_b$ is ample. This proves that $h_b$ is ample for $b$ very general. Since $h_b$ is ample for $b$ in the complement of an anaytic subset of $B$, we are done.
\end{proof}
Assume that the inequality in~\eqref{chiarello} holds. By Proposition~\ref{prp:ampioperbigen} we have 
the moduli map 
\begin{equation}\label{periodibi}
\begin{matrix}
B & \lra & \cK^{il}_e(2n) \\
b & \mapsto & [(X_b,h_b)]
\end{matrix}
\end{equation}
Note that the image of the above period map is  contained in a unique (irreducible) Noether-Lefschetz divisor in 
$\cK^{il}_e(2n)^{\rm good}$.
\begin{dfn}\label{dfn:ecconl}
If the inequality in~\eqref{chiarello} holds, we let  $\NL(d_0)\subset  \cK^{il}_e(2n)^{\rm good}$ be the unique  irreducible Noether-Lefschetz divisor containing 
the image of the moduli map in~\eqref{periodibi}.  
\end{dfn}
\begin{rmk}\label{rmk:spiegovu}
Let  $[(X,h)]\in\NL(d_0)$ be a general point. Then there exists a well defined rank two subspace $V\subset \NS(X)$ such that $V=\la h,f\ra$, where 
\begin{equation}\label{battleroyal}
q(h,f)=ild_0,\quad q(f)=0.
\end{equation}
 (For $[(X,h)]$ in a proper Zariski closed  subset of $\NL(d_0)$ there might be more than one such rank two subspace). For almost all choices (of $n,r_0,g,l,e,i$ and $d_0$) there is a unique class $f\in V$ such that $V=\la h,f\ra$ and the equalities in~\eqref{battleroyal} hold, while for special choices there are two such classes. If monodromy exchanges these two isotropic classes there is no intrinsic way of distinguishing them. Abusing language we will speak of \lq\lq the class $f$\rq\rq. If $(X,h)=(X_0,h_0)$ (recall that $X_0=S^{[n]}$ where $S$ is our elliptic $K3$ surface) then
 $f_0=c_1(\pi_0^{*}\cO_{\PP^n}(1))$, where $\pi_0$ is the
  Lagrangian fibration   given by in~\eqref{semundici}. 
By~\cite[Theorem~1.2]{matsushita-iso-div} it follows that if   $[(X,h)]\in\NL(d_0)$ is a general point then 
there exists a Lagrangian fibration  $\pi_X\colon\cX\to \PP^n$ such that $f=c_1(\pi_X^{*}\cO_{\PP^n}(1))$. 
\end{rmk}
\begin{prp}\label{prp:fibvettnl}
Keep notation as above, and assume that the inequality in~\eqref{chiarello} holds, and that $d_0$ is coprime to $r_0$. Let  
$\NL(d_0)\subset  \cK^{il}_e(2n)^{\rm good}$ be the Noether-Lefschetz divisor of Definition~\ref{dfn:ecconl}. 
There exist an open dense $\NL(d_0)^*\subset \NL(d_0)$ and for each $[(X,h)]\in\NL(d_0)^*$ a vector bundle $\cE_X$ on $X$ such that 
 \begin{equation}\label{soliteformule}
r(\cE_X)=r_0^n,\quad c_1(\cE_X)=\frac{g\cdot r_0^{n-1}}{i} h,\quad \Delta(\cE_X)  =  \frac{r_0^{2n-2}(r_0^2-1)}{12}c_2(X), 
\end{equation}
$H^p(X,End^0(\cE_X))=0$ for all $p$, and the restriction of $\cE_X$ to a general smooth fiber  of the Lagrangian fibration $X\to \PP^n$ (see Remark~\ref{rmk:spiegovu}) is slope stable.
\end{prp}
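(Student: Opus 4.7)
The idea is to start from the basic modular vector bundle on the central fiber and spread it out over the family $\varphi\colon\cX\to B$ of Definition~\ref{dfn:eccobi}. Let $\cF$ be the vector bundle on the elliptic $K3$ surface $S$ furnished by Claim~\ref{clm:classigiuste} (applied with the chosen $d_0$), and set $\cE_0:=\cF[n]^{+}$. By Claim~\ref{clm:classigiuste} the rank, first Chern class and discriminant of $\cE_0$ are as in the three equalities in~\eqref{soliteformule} with $h$ replaced by $h_0$, and by Proposition~\ref{prp:horimarco} (or directly by the McKay vanishing~\eqref{svanimento}) we have $H^p(X_0,\End^0\cE_0)=0$ for all $p$. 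The equality $c_1(\cE_0)=\frac{g\,r_0^{n-1}}{i}h_0$ shows that $c_1(\cE_0)$ stays of type $(1,1)$ throughout $B$, so Remark~\ref{rmk:siestende} produces a (unique up to isomorphism) extension of $\cE_0$ to a vector bundle $\cE$ on $\cX$; I write $\cE_b:=\cE_{|X_b}$. Flatness forces the Chern class equalities in~\eqref{soliteformule} on every $X_b$ (reading $h,f$ as $h_b,f_b$), and semicontinuity in a flat family gives that the vanishing $H^p(X_b,\End^0\cE_b)=0$ persists on an open neighborhood of $0\in B$.

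Next I produce the relative Lagrangian fibration. On $X_0=S^{[n]}$ we have $\pi_0\colon X_0\to\PP^n$ with $f_0=c_1(\pi_0^{*}\cO_{\PP^n}(1))$. Because $f_b$ is a deformation of $f_0$, Matsushita's theorem (cited in Remark~\ref{rmk:spiegovu}) implies that on an open dense $B^{\circ}\subset B$ the class $f_b$ is represented by a Lagrangian fibration $\pi_b\colon X_b\to\PP^n$, and these can be assembled into a relative Lagrangian fibration $\pi\colon\cX_{B^{\circ}}\to \PP^n\times B^{\circ}$. Shrinking if necessary, I may assume the smooth locus of $\pi$ is surjective onto an open dense subset of the base and that every $\pi_b$ with $b\in B^{\circ}$ has at least one smooth fiber lying in a common relative open set on which the universal $\cE$ is defined; Remark~\ref{rmk:k3nppav} attaches to each such smooth fiber its canonical principal polarization $\theta_t$.

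The heart of the argument is then an openness statement for slope stability on the fibers of $\pi$. At $b=0$, Proposition~\ref{prp:acca20}(a) (combined with Remark~\ref{rmk:k3nppav} and the isomorphism~\eqref{prodfibre}) gives that the restriction of $\cE_0$ to a general smooth Lagrangian fiber of $\pi_0$ is $\theta_t$-slope stable. Since we are in a flat family of polarized abelian varieties equipped with a coherent sheaf and since slope stability is an open condition on such families, there is an open dense subset $B^{\star}\subset B^{\circ}$ such that for every $b\in B^{\star}$ the restriction of $\cE_b$ to a general smooth Lagrangian fiber of $\pi_b$ is $\theta_t$-slope stable. By construction the moduli map~\eqref{periodibi} is dominant over $\NL(d_0)$, so the image of $B^{\star}$ contains an open dense subset $\NL(d_0)^{*}\subset\NL(d_0)$. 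For $[(X,h)]\in\NL(d_0)^{*}$ I define $\cE_X$ to be $\cE_b$ for any preimage $b\in B^{\star}$; the extension is intrinsic up to isomorphism because of the uniqueness assertion in Remark~\ref{rmk:siestende} applied to $(X,c_1(\cE_X))$. The three properties listed in the statement then hold by the preceding paragraphs.

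The principal technical obstacle will be step three, i.e.\ promoting slope stability of the restriction $\cE_{0\,|C_{x_1}\times\cdots\times C_{x_n}}$ to slope stability of $\cE_{b\,|\pi_b^{-1}(t)}$ for $b\in B$ general and $t$ general in $\PP^n$; to make this rigorous I will need to check that the relative Lagrangian fibration obtained from Matsushita, together with the canonical principal polarization on smooth fibers, fits into a genuine flat family to which the standard openness of slope stability (Maruyama) applies. All other steps are either formal (Chern class flatness, semicontinuity of $H^p$) or already contained in the referenced results.
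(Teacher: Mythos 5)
Your proposal is correct and follows essentially the same route as the paper: deform $\cF[n]^{+}$ using Remark~\ref{rmk:siestende}, get the equalities in~\eqref{soliteformule} from Claim~\ref{clm:classigiuste}, use upper semicontinuity for the vanishing of $H^p(X,End^0(\cE_X))$, and deduce slope stability of the restriction to a general smooth Lagrangian fiber from Proposition~\ref{prp:acca20}(a) together with Remark~\ref{rmk:k3nppav} and openness of stability. The only difference is that you spell out the relative Lagrangian fibration and the openness argument that the paper leaves implicit.
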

\begin{proof}
Let $\cE_0:=\cF[n]^{+}$ be the vector bundle on $X_0$ of Claim~\ref{clm:classigiuste}. Note that $c_1(\cE_0)=h_0$ by loc.cit. 
If $B$ is small enough, then by  Remark~\ref{rmk:siestende} the vector bundle $\cE_0$ on $X_0$ deforms uniquely to a vector bundle $\cE_b$ on $X_b$, and hence we get a vector bundle $\cE_X$ on $X$ for $[(X,h)]$  in a dense open subset  $\cU\subset\NL(d_0)$. The equations in~\eqref{soliteformule} hold by 
the equations in~\eqref{robertbyron}. Since  $H^p(X,End^0(\cE_0))=0$ for all $p$ (see~\eqref{svanimento}) it follows from upper semicontinuity of the dimension of cohomology sheaves that for $[(X,h)]$ in a smaller dense open subset  $\cU'\subset\cU$ we have
$H^p(X,End^0(\cE_X))=0$ for all $p$. Lastly, it follows from Item~(a) of Proposition and Remark~\ref{rmk:k3nppav} 
that   for $[(X,h)]$ in a smaller dense open subset  
$\cU''\subset\cU'$ the restriction of $\cE_X$ to a general fiber  of the Lagrangian fibration $X\to \PP^n$ (see Remark~\ref{rmk:spiegovu}) is slope stable. We set $\NL(d_0)^*:=\cU''$.
\end{proof}
\subsection{Suitable polarizations}\label{subsec:poladatto}
\setcounter{equation}{0}
 We recall that if $h$ is $a$-suitable (see~\cite[Definition~3.5]{ogfascimod}) and $\cE$ is a vector bundle  on $X$ with $a(\cE)\le a$ (see~(3.1.1) loc.cit.~for the definition of $a(\cE)$), then  slope stability of  the restriction of $\cE$ to a general Lagrangian fiber (there is a canonical choice of polarization of any smooth Lagrangian fiber, see Remark~\ref{rmk:stabonlagfib}) implies slope stability of $\cE$, and the following weak converse holds: 
 slope stability of $\cE$ implies that the restriction of $\cE$ to a general Lagrangian fiber is slope semistable. 
\begin{lmm}\label{lmm:opportuno}
Keep assumptions and notation as above, and let $a>0$. Suppose that  
\begin{equation}\label{castagne}
ild_0>a(e+1). 
\end{equation}
Let $[(X,h)]\in\NL(d_0)$ be a general point, and let $f\in V\subset\NS(X)$ be as in Remark~\ref{rmk:spiegovu}. 
Then there does not exist $\xi\in\NS(X)$ 	such that 
\begin{equation}\label{disegua}
-a\le q_X(\xi)<0, \qquad q_X(\xi,h)>0,\quad q_X(\xi,f)>0.
\end{equation}
\end{lmm}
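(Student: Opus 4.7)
The plan is a direct lattice-theoretic computation on $V = \langle h, f\rangle$.

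First I would reduce to an integer-arithmetic problem by verifying that for a general $[(X,h)] \in \NL(d_0)$ one has $\NS(X) = V$ and that $\{h, f\}$ is a $\ZZ$-basis of $V$. The second assertion is a saturation check on the rank-$2$ sublattice $\ZZ h_0 + \ZZ f_0 \subset H^2(S^{[n]}, \ZZ)$, which goes through thanks to the coprimality hypothesis $\gcd(l, (r_0 - 1)/g) = 1$ from~\eqref{nicoletta} together with the explicit description of $h_0$ in~\eqref{tuttipazzipermary} and the fact that $\mu(C)$, $\mu(D)$, $\delta_n$ are independent primitive classes in $H^2(S^{[n]}, \ZZ)$. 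With respect to this basis, the Gram matrix of $q_X|_V$ is
\[
\begin{pmatrix} e & ild_0 \\ ild_0 & 0 \end{pmatrix}.
\]

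Next I would argue by contradiction: suppose $\xi = \alpha h + \beta f \in V$ with $\alpha, \beta \in \ZZ$ satisfies the three conditions of~\eqref{disegua}. Reading them off one by one: $q_X(\xi, f) = \alpha \cdot ild_0 > 0$ forces $\alpha \ge 1$; then $q_X(\xi) = \alpha^2 e + 2\alpha\beta \cdot ild_0 < 0$, combined with $\alpha, e > 0$, forces $\beta \le -1$, so $k := -\beta \ge 1$. The key identity
\[
- q_X(\xi) \;=\; \alpha \bigl( 2 k \cdot ild_0 - \alpha e \bigr)
\]
exhibits $-q_X(\xi) \in \{1, 2, \dots, \lfloor a \rfloor\}$ as a product of two strictly positive integers; since the second factor is $\ge 1$, we get $\alpha \le a$.

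Finally, the remaining condition $q_X(\xi, h) = \alpha e - k \cdot ild_0 > 0$ reads $k < \alpha e / ild_0$. Combining $\alpha \le a$ with the hypothesis $ild_0 > a(e+1)$ yields
\[
k \;<\; \frac{\alpha e}{ild_0} \;\le\; \frac{a e}{a (e+1)} \;=\; \frac{e}{e+1} \;<\; 1,
\]
contradicting $k \ge 1$, and finishing the proof.

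I expect the only non-routine step to be the initial saturation/basis verification: confirming that $\{h, f\}$ is an integral basis of $\NS(X)$ (rather than merely of a finite-index sublattice) is precisely where the arithmetic hypotheses of Theorem~\ref{thm:unicita} enter, and without it the corresponding inequality $2k \cdot ild_0 - \alpha e \ge 1$ could fail. Once integer coordinates are in hand, the rest is an elementary two-line estimate.
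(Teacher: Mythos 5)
Your lattice computation inside $V=\la h,f\ra$ is correct, and it is essentially the content of Lemma~\ref{lmm:nocamere} (which the paper simply invokes with $\Lambda=V$, $\alpha=f$, $\beta=h$): under $ild_0>a(e+1)$ no class of $V$ has square in $[-a,0)$, so in particular no $\xi\in V$ satisfies~\eqref{disegua}. The gap is in your very first reduction. The lemma quantifies over all of $\NS(X)$, and your claim that $\NS(X)=V$ for a \emph{general} point of $\NL(d_0)$ is only true for a \emph{very} general point: inside the divisor $\NL(d_0)$ the locus where $\rho(X)\ge 3$ is a countable union of further Noether--Lefschetz loci, which is dense and cannot be thrown away if ``general'' is to mean ``in a Zariski open dense subset''. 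That stronger reading is exactly what the paper proves (its proof explicitly separates the ``very general'' case $\NS(X)=\la h,f\ra$ from the actual statement) and what is used later, e.g.\ when suitability is fed into Propositions~\ref{prp:piazzolla} and~\ref{prp:stabunico} on open subsets of the divisors $\NL(d)$. So as written your argument proves a weaker statement and omits the main point of the proof.

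What is missing is the treatment of classes $\xi\in\NS(X)$ with a nonzero component orthogonal to $V$. The paper writes $\xi=\frac{\xi_1}{D}+\frac{\xi_2}{D}$ with $\xi_1\in V$, $\xi_2\in V^{\bot}\cap\NS(X)$, where $D$ is the index of $V\oplus(V^{\bot}\cap\NS(X))$ in $\NS(X)$; crucially $D$ is bounded purely in terms of $\disc(V)=-(ild_0)^2$ and the discriminant of $q_X$, hence uniformly over $\NL(d_0)$. Your in-$V$ argument shows $\xi_2\neq 0$; negative definiteness of $q_X$ on $h^{\bot}\cap\NS(X)$ gives $q_X(\xi_2)<0$, and the constraint $-a\le q_X(\xi)$ together with the uniform bound on $D$ yields a uniform bound $-M\le q_X(\xi_2)<0$. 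Consequently any $[(X,h)]\in\NL(d_0)$ carrying a class $\xi$ as in~\eqref{disegua} lies on one of \emph{finitely many} Noether--Lefschetz divisors, none of which contains $\NL(d_0)$; this is what makes the conclusion hold on a Zariski open dense subset of $\NL(d_0)$. Without this step (or some substitute for it) your proof only covers the very general case $\NS(X)=V$.
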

\begin{proof}
Let $\la h,f\ra=V\subset \NS(X)$ be as in Remark~\ref{rmk:spiegovu}. 
Applying Lemma~\ref{lmm:nocamere} to $\Lambda:=V$, $\alpha=f$ and $\beta=h$ one gets that there are no 
$\xi\in V$ such that $-a\le q_X(\xi)<0$.
In particular if $\NS(X)=\la h,f\ra$ (as is the case for very general $[(X,h)]\in\NL(d_0)$), then there is no 
$\xi\in V$ such that the inequalities in~\eqref{disegua} hold. 

It follows that the set of $[(X,h)]\in\NL(d_0)$ for which \emph{there exists} $\xi\in\NS(X)$
such that the inequalities in~\eqref{disegua} hold is  
  the intersection of $\NL(d_0)$ with a finite union of Noether-Lefschetz 
 divisors in  $\cK^{il}_e(2n)$, each of which does not contain $\NL(d_0)$. In fact suppose that  the inequalities in~\eqref{disegua} hold.
 Let $D$ be the (finite) index of $\la h,f\ra\oplus (\la h,f\ra^{\bot}\cap \NS(X))$ in $\NS(X)$. It is crucial to note that $D$ has an upper bound which only depends on the discriminant of the restrictions of $q_X$  to $V$ (i.e.~$-(ild_0)^2$) and to $V^{\bot}$, and the latter has an upper bound depending only on 
 the discriminant of $V$ and the discriminant of 
 $q_X$ (i.e.~$2n-2$). 
Then
\begin{equation}
\xi=\frac{\xi_1}{D}+\frac{\xi_2}{D},\qquad \xi_1\in\la h,f\ra,\quad \xi_2\in\la h,f\ra^{\bot}.
\end{equation}
Moreover we have just proved that $\xi_2$ is non zero.  Since the restriction of $q_X$ to $h^{\bot}\cap\NS(X)$ is negative definite we get that 
$q(\xi_2)<0$. We also have $q(\xi_1)<0$
by the last two inequalities in~\eqref{disegua}.  

Hence by the first inequality in~\eqref{disegua}  there exists a positive $M$ independent of $(X,h)$ such that $-M\le q(\xi_2)<0$ (here 
it is crucial that  $D$ has an upper bound which only depends on $(ild_0)^2$) and $2n-2$). Hence the moduli point of $(X,h)$ belongs to  the intersection of $\NL(d_0)$ with a finite union of Noether-Lefschetz 
 divisors in  $\cK^{il}_e(2n)$, and none of them contains $\NL(d_0)$ because if $\rho(X)=2$ then $[(X,h)]$ is not contained in any of these
 Noether-Lefschetz 
 divisors.
\end{proof}
\begin{prp}\label{prp:piazzolla}
Keep notation as above, and assume that 
\begin{equation}\label{psicologa}
ild_0>\frac{r_0^{4n-2}(r_0^2-1)(n+3)(e+1)}{8}.
\end{equation}
Let $[(X,h)]\in \NL(d_0)$ (the inequality in~\eqref{psicologa} implies that the inequality in~\eqref{chiarello} holds, and hence  the Noether-Lefschetz divisor $\NL(d_0)\subset  \cK^{il}_e(2n)^{\rm good}$ is defined).  
If $\cE$ is a vector bundle on $X$ such that the equalities in~\eqref{chernvbe} hold, 
   then $h$  is $a(\cE)$-suitable (relative to the associated Lagrangian fibration 
 $\pi\colon\cX\to \PP^n$).
\end{prp}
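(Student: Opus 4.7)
The plan is to reduce the proposition to a direct application of Lemma~\ref{lmm:opportuno} by computing the invariant $a(\cE)$ explicitly and matching it against the threshold in~\eqref{psicologa}. Recall from~\cite[Definition~3.5 and (3.1.1)]{ogfascimod} that, given a Lagrangian fibration with associated isotropic class $f$, a polarization $h$ is $a$-suitable precisely when no class $\xi\in\NS(X)$ satisfies simultaneously $-a\le q_X(\xi)<0$, $q_X(\xi,h)>0$ and $q_X(\xi,f)>0$; the invariant $a(\cE)$ is cooked up so that the Bogomolov-type estimate forces any potential destabilizing subsheaf of $\cE$ to produce exactly such a $\xi$ with $q_X(\xi)\ge -a(\cE)$. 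Thus the proposition is equivalent to the non-existence of walls in the range $[-a(\cE),0)$ intersected with the positivity chamber spanned by $h$ and $f$.

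Substituting $r(\cE)=r_0^n$ and $\Delta(\cE)=\tfrac{r_0^{2n-2}(r_0^2-1)}{12}c_2(X)$ into the formula of~\cite[(3.1.1)]{ogfascimod}, and using the Fujiki-type relation between $\int_X c_2(X)\cdot\alpha^{2n-2}$ and $q_X(\alpha)^{n-1}$ on a HK manifold of type $K3^{[n]}$ (whence the factor $n+3$), one finds after routine manipulation
\[
a(\cE)\;=\;\frac{r_0^{4n-2}(r_0^2-1)(n+3)}{8}.
\]
With this identification, the hypothesis~\eqref{psicologa} reads exactly $ild_0>a(\cE)(e+1)$, which is the hypothesis~\eqref{castagne} of Lemma~\ref{lmm:opportuno} for $a=a(\cE)$.

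By Remark~\ref{rmk:spiegovu} and~\cite[Theorem~1.2]{matsushita-iso-div}, for a general $[(X,h)]\in\NL(d_0)$ the isotropic class $f\in V\subset\NS(X)$ is $c_1(\pi^*\cO_{\PP^n}(1))$ for the Lagrangian fibration $\pi\colon X\to\PP^n$. Applying Lemma~\ref{lmm:opportuno} with $a=a(\cE)$ then rules out any $\xi\in\NS(X)$ verifying the estimates~\eqref{disegua}, and this is precisely the defining condition of $a(\cE)$-suitability of $h$ relative to $\pi$. The only genuinely delicate point is the bookkeeping involved in extracting the explicit value of $a(\cE)$ from~\cite[(3.1.1)]{ogfascimod}; once that numerical coincidence between $a(\cE)(e+1)$ and the right-hand side of~\eqref{psicologa} is confirmed, the conclusion follows mechanically from the lemma.
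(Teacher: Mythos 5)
Your proposal is correct and follows essentially the same route as the paper: compute $a(\cE)=\frac{r_0^{4n-2}(r_0^2-1)(n+3)}{8}$ from the discriminant formula in~\eqref{chernvbe} together with the Fujiki-type identity $\int_X c_2(X)\cdot\alpha^{2n-2}=6(n+3)(2n-3)!!\,q_X(\alpha)^{n-1}$, observe that~\eqref{psicologa} is then exactly the hypothesis~\eqref{castagne} with $a=a(\cE)$, and conclude by Lemma~\ref{lmm:opportuno}.
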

\begin{proof}
If $\alpha\in H^2(X)$ we have
\begin{equation*}
\int_X c_2(X)\cdot \alpha^{2n-2}=6(n+3)(2n-3)!! q_X(a)^{n-1},
\end{equation*}
(the  equality above. can be obtained from the known Hirzebruch-Huybrechts-Riemann-Roch formula for HK manifolds of Type $K3^{[n]}$) gives that $d(\cE_X)=r_0^{2n-2}(r_0^2-1)(n+3)/2$ (the definition of $d(\cE_X)$ is in~\cite[(1.2.2)]{ogfascimod}), and hence
\begin{equation*}
a(\cE_X)=\frac{r_0^{4n-2}(r_0^2-1)(n+3)}{8}.
\end{equation*}
The inequality in~\eqref{psicologa} and  Lemma~\ref{lmm:opportuno} give that $h$  is $a(\cE)$-suitable.
\end{proof}
\subsection{Proof of Proposition~\ref{prp:stabesiste}}\label{subsec:dimoesiste}
\setcounter{equation}{0}
Keeping notation and assumptions as above. Assume in addition  
 that $d_0$ is coprime to $r_0$ and that the inequality in~\eqref{psicologa} holds. (Note that the set of such $d_0$  is infinite.) Let $[(X,h)]\in\NL(d_0)^*$, and let $\cE_X$ be  a vector bundle    on $X$  as in Proposition~\ref{prp:fibvettnl}. 
By Proposition~\ref{prp:piazzolla} the polarization $h$  is $a(\cE_X)$-suitable relative to the Lagrangian fibration 
 $\pi_X\colon\cX\to \PP^n$.
 By Proposition~\ref{prp:fibvettnl} the restriction of $\cE_X$ to a general fiber of  $\pi_X\colon\cX\to \PP^n$ is slope stable. Since  $h$  is $a(\cE_X)$-suitable, the vector bundle $\cE_X$ is $h$ slope stable by ~\cite[Proposition~3.6]{ogfascimod}. We have $H^p(X,End^0(\cE_X))=0$ for all $p$, and hence $\cE_X$ extends (uniquely) to all small deformations of $(X,\det\cE_X)$ by Remark~\ref{rmk:siestende}. Since $c_1(\cE_X)$ is a multiple of $h$, we get that $\cE_X$ extends  to a vector bundle $\cE'$ on a general deformation $(X',h')$ of $(X,h)$. By openness of slope stability $\cE'$ is slope stable and by upper semicontinuity of cohomology dimension  $H^p(X,End^0(\cE'))=0$ for all $p$. 
\subsection{Tate-Shafarevich twists}\label{subsec:erwitt}
\setcounter{equation}{0}
A basic example of Lagrangian fibration is obtained as follows. Let $(S,h_S)$ be a polarized $K3$ surface of genus $n$. Let $\cJ(S)$ be the moduli space of rank $0$ pure $\cO_S(1)$ semistable sheaves $\xi$ with $\chi(\xi)=1-n$, i.e.~sheaves with Mukai vector $(0,h_S,1-n)$. The generic point of 
$\cJ(S)$ is represented by $i_{*}\cL$, where $i\colon C\hra S$ is the inclusion of a smooth $C\in \cO_S(1)$, and $\cL$ is a line bundle of degree $0$. Suppose that all divisors in the complete linear system $|\cO_S(1)|$ are irreducible and reduced. 
Then  every semistable sheaf parametrized by  $\cJ(S)$ is stable, and  $\cJ(S)$ is  a HK projective variety of Type $K3^{[n]}$. Moreover  
the support map $\cJ(S)\to |\cO_S(1)|\cong \PP^n$ is a Lagrangian fibration. 

Let  
$\NL(d_0)\subset  \cK^{il}_e(2n)^{\rm good}$ be the Noether-Lefschetz divisor of Definition~\ref{dfn:ecconl}, and let  
 $[(X,h)]\in\NL(d_0)$ be a general point. Then the associated  Lagrangian fibration  $\pi\colon X\to\PP^n$  is
 related to a (general) moduli space $\cJ(S)$ via a Tate-Shafarevich twist. In order to be more precise, we recall a result of Markman. 
First, if  $[(X,h)]\in\NL(d_0)$ is a general point, then there is an associated  polarized $K3$ surface  $(S,D)$ of genus $n$, and  moreover  $(S,D)$ is  a general such polarized surface  - see~\cite[Subsection~4.1]{markman-lagr}.
\begin{prp}\label{prp:torcere}
Keep notation as above, and assume that the inequality in~\eqref{chiarello} holds. 
 Let $[(X,h)]\in\NL(d_0)$ be a general point, let $X\to\PP^n$ be the associated Lagrangian fibration, and let  $(S,D)$ be the associated  polarized $K3$ surface (which is a general polarized $K3$ surface of genus $n$). Then $ X\to\PP^n$ is isomorphic to a Tate-Shafarevich twist of 
$\cJ(S)\to |D|$ via an identification $\PP^n\overset{\sim}{\lra}|D|$. 
\end{prp}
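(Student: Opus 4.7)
The plan is to apply Markman's construction~\cite[Subsection~4.1]{markman-lagr}, which attaches to any Lagrangian fibration on a HK manifold of type $K3^{[n]}$ a polarized $K3$ surface of genus $n$ together with a Tate-Shafarevich twist description of the fibration.

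First I would verify the existence of the Lagrangian fibration. By Remark~\ref{rmk:spiegovu} together with~\cite[Theorem~1.2]{matsushita-iso-div}, a general $[(X,h)]\in\NL(d_0)$ carries a Lagrangian fibration $\pi_X\colon X\to\PP^n$ whose fiber class $f$ satisfies $q(f)=0$ and $q(h,f)=ild_0$; the Picard lattice $\la h,f\ra\subset\NS(X)$ has the Gram matrix prescribed in~\eqref{battleroyal}.

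Next I would feed the pair $(X,f)$ into Markman's construction: using the primitive embedding of the BBF lattice into the Mukai lattice of a $K3$ surface determined by $f$, one produces a polarized $K3$ surface $(S,D)$ of genus $n$, a canonical identification $\PP^n\overset{\sim}{\to}|D|$, and an isomorphism of $\pi_X\colon X\to\PP^n$ with a Tate-Shafarevich twist of the Beauville-Mukai system $\cJ(S)\to|D|$. Markman also shows that when $[(X,h)]$ is general in its Noether-Lefschetz locus the resulting $(S,D)$ is a general polarized $K3$ surface of genus $n$ -- this is precisely the genericity assertion quoted immediately before the proposition, so no further dimension count is needed.

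Thus the proof reduces to citing Markman's theorem. The main conceptual content -- and the hard step if one were to redo it from scratch -- is the Hodge-theoretic passage from $(X,f)$ to $(S,D)$: one must identify the extension $0\to\ZZ f\to f^{\perp}\to f^{\perp}/\ZZ f\to 0$ inside $H^2(X,\ZZ)$, via the Mukai embedding, with the datum of a $K3$ Hodge structure together with a primitive class $D$ of square $2n-2$, and then realize $X\to\PP^n$ as a torsor over the smooth locus of $\cJ(S)\to|D|$. Both of these are carried out in detail in~\cite[Subsection~4.1]{markman-lagr} and I would simply invoke them.
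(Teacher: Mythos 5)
There is a genuine gap: your argument never uses the hypothesis that the inequality in~\eqref{chiarello} holds, and that hypothesis is exactly what is needed at the one point where ``citing Markman'' is not enough. For a Lagrangian fibration $\pi_X\colon X\to\PP^n$ with associated general $(S,D)$, Markman's results identify $X\to\PP^n$ with a Tate--Shafarevich twist of $\cJ(S)\to|D|$ only after one knows that $X$ has no nontrivial birational models: a priori the classification is up to bimeromorphic modification of the total space, so what you get without further input is only that $X$ is \emph{bimeromorphic} to such a twist. The paper closes this gap as follows: it first assumes $\rho(X)=2$ and observes, as in the proof of Proposition~\ref{prp:ampioperbigen} (which uses~\eqref{chiarello} together with Lemma~\ref{lmm:nocamere} and Mongardi's description of the walls), that the ample cone of $X$ equals the whole positive cone, so that every bimeromorphic map $X\dra X'$ to a HK manifold is an isomorphism; only then does \cite[Theorem~7.13]{markman-lagr} yield that $X\to\PP^n$ is \emph{isomorphic} to a Tate--Shafarevich twist of $\cJ(S)\to|D|$, and the statement for a general (not just very general) point of $\NL(d_0)$ follows because the locus where $\rho(X)=2$ is dense in $\NL(d_0)$. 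Your proposal skips this birational-to-biregular step entirely, so as written it proves only a bimeromorphic version of the proposition.

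A smaller point: the construction of the associated $(S,D)$ and its genericity is indeed \cite[Subsection~4.1]{markman-lagr}, and that part of your outline matches the paper's setup; but the Tate--Shafarevich statement you need is \cite[Theorem~7.13]{markman-lagr}, not Subsection~4.1, and it is precisely in applying it that the ample-cone input coming from~\eqref{chiarello} is required.
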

\begin{proof}
Suppose first that $\rho(X)=2$. Then, as shown in the proof of Proposition~\ref{prp:ampioperbigen}, the  ample cone of $X$ is equal to the positive cone (because of the inequality in~\eqref{chiarello}), and hence every  bimeromorphic map $X\dra X'$, where $X'$ is a $HK$,  is actually an isomorphism. It follows that $X$ is isomorphic to a Tate-Shafarevich twist of $\cJ(S)\to |D|$ by Theorem~7.13 in~\cite{markman-lagr}. The result follows from this because the locus in $\NL(d_0)$ parametrizing $(X,h)$ such that $\rho(X)=2$ is dense.
\end{proof}
Let 
$\Pic^0(X/\PP^n)$  be the relative Picard scheme of the Lagrangian fibration $ X\to\PP^n$ (notice that all fibers of $X\to\PP^n$ are irreducible by Proposition~\ref{prp:torcere}).
Let $U\subset\PP^n$ be the open dense set of regular values of $X\to\PP^n$. 
If  $t\in U$, the fiber of $\Pic^0(X/\PP^n)\to\PP^n$ over $t$ is an abelian variety $A_t$ (of dimension $n$) and
the fundamental group $\pi_1(U,t)$ acts by monodromy on the subgroup $A_{t,tors}$ of torsion points. 
\begin{crl}\label{crl:modinv}
Keep hypotheses and notation as above, and suppose that 
 $V\subset A_t[r_0^n]$ is a coset (of a subgroup of $ A_t[r_0^n]$) of cardinality $r_0^{2n}$ invariant under the action of monodromy. 
 Then $V= A_t[r_0]$.
\end{crl}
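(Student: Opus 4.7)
The plan is to reduce the statement to a monodromy computation for the family of smooth curves on a general polarized K3 surface, and then combine the resulting full symplectic monodromy with a classification of invariant subgroups of $(\ZZ/r_0^n)^{2n}$.

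By Proposition~\ref{prp:torcere}, for a general $[(X,h)]\in \NL(d_0)$ the Lagrangian fibration $\pi\colon X\to\PP^n$ is isomorphic, via an identification $\PP^n\cong |D|$, to a Tate-Shafarevich twist of $\cJ(S)\to|D|$, where $(S,D)$ is a general polarized K3 surface of genus $n$. Since a Tate-Shafarevich twist modifies only the torsor structure over the base and not the underlying relative abelian scheme, the fibration $\Pic^0(X/\PP^n)\to\PP^n$ is, over the regular value locus $U$, canonically identified with the relative Jacobian of the universal smooth curve $f\colon \cC\to|D|^{\mathrm{sm}}$. Consequently the monodromy representation of $\pi_1(U,t)$ on the torsion groups $A_t[N]$ is identified with the monodromy of $R^1 f_{*}(\ZZ/N)$, equipped with the cup product pairing (which corresponds to the Weil pairing on $A_t[N]$).

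The key input I would use is that for a general polarized K3 surface of genus $n$, the monodromy of the family of smooth curves in $|D|$ on $H^1(C,\ZZ/N)$ is the full symplectic group $\Sp_{2n}(\ZZ/N)$ preserving the cup product pairing. I would deduce this from Deligne's analysis of the monodromy of Lefschetz pencils (SGA 7) applied to a Lefschetz pencil contained in the complete linear system $|D|$: the vanishing cycles give Picard-Lefschetz transvections which are known to generate $\Sp_{2n}(\ZZ/N)$, provided the vanishing cycles form a single orbit spanning the primitive cohomology, which holds in the generic K3 setting. I expect this classical input to be the most delicate step to pin down with a precise reference.

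Granting this, the $\Sp_{2n}(\ZZ/r_0^n)$-invariant subgroups of $A_t[r_0^n]\cong (\ZZ/r_0^n)^{2n}$ are exactly the subgroups $A_t[d]$ for $d\mid r_0^n$, as one sees by reducing via the Chinese Remainder Theorem to prime-power levels $N=p^k$, where $\Sp_{2n}(\ZZ/p^k)$ acts transitively on vectors of each fixed $p$-depth. Writing $V=x+H$ with $H$ an invariant subgroup of cardinality $r_0^{2n}$, this forces $H=A_t[r_0]$. Invariance of $V$ then gives $\sigma(x)-x\in A_t[r_0]$ for every monodromy element $\sigma$, i.e.\ the image $\bar x$ of $x$ in $A_t[r_0^n]/A_t[r_0]\cong A_t[r_0^{n-1}]$ is a fixed vector of the full symplectic action. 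Since a Picard-Lefschetz transvection $y\mapsto y+\langle y,v\rangle v$ applied to a fixed vector $\bar x$ forces $\langle \bar x,v\rangle=0$ for every primitive $v$, nondegeneracy of the pairing gives $\bar x=0$. Hence $x\in A_t[r_0]$ and $V=A_t[r_0]$.
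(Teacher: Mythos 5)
Your argument has the same architecture as the paper's proof: identify $\Pic^0(X/\PP^n)$ over the regular locus with the relative Jacobian of the universal smooth curve in $|D|$ via Proposition~\ref{prp:torcere}, invoke a big-monodromy statement for that family, classify the invariant subgroups of $A_t[r_0^n]$ to get $H=A_t[r_0]$, and then kill the coset by observing that its class in the quotient $A_t[r_0^n]/A_t[r_0]$ is a monodromy-invariant element, hence zero. Your group-theoretic steps (CRT reduction, transitivity on vectors of fixed depth, the transvection/nondegeneracy argument for the fixed vector) are fine and if anything more explicit than the paper, which argues via the structure theorem for finite abelian groups and transitivity on non-zero elements.

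The one genuine soft spot is exactly the step you flag: the justification of the big monodromy. As stated, ``the vanishing cycles give Picard--Lefschetz transvections which are known to generate $\mathrm{Sp}_{2n}(\ZZ/N)$ provided they form a single orbit spanning the cohomology'' is not a theorem: a conjugation-invariant set of transvections on a spanning orbit of primitive vectors can also generate a proper subgroup stabilizing a quadratic refinement mod $2$ (this is the well-known spin-structure obstruction for monodromy of linear systems on surfaces), and Deligne's SGA~7 argument gives Zariski density of the image in the symplectic group, which does not by itself yield surjectivity onto $\mathrm{Sp}_{2n}(\ZZ/N)$ at finite level. For a primitive linear system on a general $K3$ this obstruction does vanish, but that requires a $K3$-specific input; the paper supplies it by citing a transitivity result for the monodromy of curves in $|D|$ (\cite[Proposition~4.4]{fmos1}, for $n\ge 3$, with $n=2$ done by hand), which is weaker than full symplectic image but is all that either argument actually needs (transitivity on elements of fixed order, plus absence of non-zero invariant elements). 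So your proof is correct modulo replacing the Lefschetz-pencil sketch by such a reference (or by an honest verification that no invariant quadratic form mod $2$ survives); with that substitution it coincides with the paper's proof.
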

\begin{proof}
Let $(S,D)$ be the polarized $K3$ surface of genus $n$ associated to $X$ following Markman. 
Let $\cJ(S)_0\subset \cJ(S)$ be the open dense subset of   smooth points (i.e.~smooth points of $\cJ(S)$ with surjective differential) of the map 
$\cJ(S)\to |D|$. 
By Proposition~\ref{prp:torcere}, $\Pic^0(X/\PP^n)\to\PP^n$ is isomorphic to $\cJ(S)_0\to |D|$, for a certain identification $\PP^n\overset{\sim}{\lra} |D|$. Under this identification $t\in\PP^n$ corresponds to a smooth curve $C\in |D|$, and 
  the corresponding Lagrangian fiber $A_t$   is the Jacobian of  $C$. Hence we have a natural isomorphism 
\begin{equation}\label{toromo}
H_1(C;\QQ)/H_1(C;\ZZ)\overset{\sim}{\lra}A_{t,tors},
\end{equation}
 and the identification is compatible with the monodromy actions.

First we prove the result under the assumption that $V$ is a subgroup $G$. By the structure theorem for finite
abelian groups $G\cong \ZZ/(d_1)\oplus\ldots \oplus \ZZ/(d_r)$, where $r\le 2n$ (because 
$A_t[r_0^n]\cong \ZZ/(r_0^n)^{\oplus 2n}$) and $d_i| r_0^n$ for all $i$.   The monodromy action on  $H_1(C;\ZZ)$ is transitive on non zero elements 
by~\cite[Proposition~4.4]{fmos1} (if $n\ge 3$, if $n=2$ transitivity is proved by hand).
It follows from the isomorphism in~\eqref{toromo} that $r=2n$ and $d_1=\ldots=d_r$. Thus $d_i=r_0$ for all $i\in\{1,\ldots,2n\}$ because  $|G|=r_0^{2n}$. This proves the result under the assumption that $V$ is a subgroup.

Now let $V$ be a translate of a group $G$. Then $G=\{a-b \mid a,b\in V\}$, and hence $G$ is also  invariant for the monodromy action. Thus $G= A_t[r_0]$ by what we have just proved, and the coset
$V$ gives a point of the quotient $A_t[r_0^{2n}]/A_t[r_0^n]\cong A_t[r_0^n]$ which is invariant for the monodromy action.  There is a  unique invariant element, namely $0$, because of the isomorphism in~\eqref{toromo} and the transitivity of the monodromy action on   non zero elements of 
$H_1(C;\ZZ)$. Hence $V=A_t[r_0]$.
\end{proof}
\subsection{More properties of $\cE_X$ for $[(X,h)]\in\NL(d_0)^*$  a general point}\label{subsec:chebello}
\setcounter{equation}{0}
The main result of the present subsection is the following improved version of Proposition~\ref{prp:fibvettnl}.
\begin{prp}\label{prp:stabincoduno}
Keep notation as in Subsection~\ref{subsec:esistestab}, and assume that  $d_0$ is coprime to $r_0$, and  that the inequality in~\eqref{psicologa} holds. 
There exist an open dense subset $\NL(d_0)^{**}\subset \NL(d_0)^*$ and for each $[(X,h)]\in\NL(d_0)^{**}$ an $h$ slope stable vector bundle $\cE_X$ on $X$ such that the following hold:
\begin{enumerate}
\item
The equalities in~\eqref{soliteformule} hold.
\item
$H^p(X,End^0(\cE_X))=0$ for all $p$. 
\item
There exists  an open $\cU_X\subset\PP^n$, whose complement has codimension at least $2$, such for every 
$t\in \cU_X$ the restriction of $\cE_X$ to the  fiber $X_t$ over $t$ of the Lagrangian fibration $X\to \PP^n$   
is slope stable for the restriction of $h$ to $X_t$.
\end{enumerate}
\end{prp}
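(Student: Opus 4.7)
The plan is to reduce Proposition~\ref{prp:stabincoduno} to a codimension estimate at the central fiber $X_0=S^{[n]}$ of the family $\varphi\colon\mathcal{X}\to B$, and then to spread it to nearby $[(X,h)]$ via upper semicontinuity of fiber dimension. Items~(1), (2) and the global $h$ slope stability of $\cE_X$ are already provided by Proposition~\ref{prp:fibvettnl} together with the proof of Proposition~\ref{prp:stabesiste}, so only the fiberwise statement~(3) is new.

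First I would work with the family $\varphi\colon\mathcal{X}\to B$ of Definition~\ref{dfn:eccobi}: Remark~\ref{rmk:siestende} gives the unique extension $\mathcal{E}$ of $\cE_0=\cF[n]^{+}$ to $\mathcal{X}$, and Matsushita's invariance theorem (invoked in Remark~\ref{rmk:spiegovu}) extends $\pi_0$ to a relative Lagrangian fibration $\pi\colon\mathcal{X}\to\mathcal{B}$ over a dense open subset of $B$. I would then form the analytic subset
\begin{equation*}
\mathcal{Z}:=\{b\in\mathcal{B}\mid\mathcal{E}_{|\pi^{-1}(b)}\text{ is not slope stable with respect to }h_{|\pi^{-1}(b)}\},
\end{equation*}
which is closed in $\mathcal{B}$ by the openness of slope stability in flat families. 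By upper semicontinuity of fiber dimension for the composite $\mathcal{Z}\to\mathcal{B}\to B$, the bound $\codim_{\PP^n}(Z_b)\ge 2$ at the central fiber $b=0$ propagates to a dense open subset of $B$; transporting this via the moduli map~\eqref{periodibi} produces the required $\NL(d_0)^{**}\subset\NL(d_0)^*$ with $\cU_X:=\PP^n\setminus Z_X$.

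The remaining and main task is the codimension estimate $\codim_{\PP^n}(Z_0)\ge 2$ on $X_0=S^{[n]}$ with $\cE_0=\cF[n]^{+}$. Using the decomposition $\Delta=\cT\cup\bigcup_{j}\cD(b_j)$ of the discriminant of $\pi_0$ I would treat each stratum separately: outside $\Delta$ each Lagrangian fiber is a product of smooth elliptic curves, and $h_0$ restricts to a multiple of the product principal polarization by Remark~\ref{rmk:k3nppav}, so slope stability follows from Proposition~\ref{prp:acca20}(a); on a dense open of $\cD(b_j)\setminus\cT$ the fiber is still a product of reduced irreducible curves with $h_0$ restricting to a product polarization, and slope stability follows from Proposition~\ref{prp:acca20}(a) combined with Remark~\ref{rmk:halloween}; on a dense open of $\cT$ Proposition~\ref{prp:acca20}(b) yields only simplicity, and I would upgrade simplicity to slope stability by combining the $a(\cE_0)$-suitability of $h_0$ from Proposition~\ref{prp:piazzolla}---which via~\cite[Proposition~3.6]{ogfascimod} forces slope semistability on the generic Lagrangian fiber---with the structural results for $\cF_{|\bm{C}_j}$ of~\cite[Proposition~6.13]{ogfascimod} and the semi-homogeneity of generic-fiber restrictions, to rule out proper slope-equal subobjects. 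The main obstacle is precisely this $\cT$ stratum: since $\cT$ is a divisor in $\PP^n$, losing slope stability on a dense open of $\cT$ would already violate the codimension-two bound, and simplicity by itself is insufficient to conclude slope stability.
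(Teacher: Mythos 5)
Your reduction of Items~(1), (2) and of the global $h$ slope stability of $\cE_X$ to Proposition~\ref{prp:fibvettnl} and the proof of Proposition~\ref{prp:stabesiste} is correct, and your treatment of fibers over cycles with distinct support and over the general point of $\cD(b_j)$ matches Remark~\ref{rmk:halloween}. But the proposal has a genuine gap exactly where you flag it: the $\cT$ stratum. Your strategy is to prove the codimension-two bound for \emph{slope stability} already on the central fiber $S^{[n]}\to\PP^n$ and then spread it over $B$, and this forces you to prove slope stability of $\cF[n]^{+}$ restricted to the fibers over a dense open subset of the divisor $\cT\subset\PP^n$. None of the tools you invoke can deliver this: suitability of $h_0$ (Proposition~\ref{prp:piazzolla} together with \cite[Proposition~3.6]{ogfascimod}) only forces slope semistability of the restriction to a \emph{general} Lagrangian fiber and says nothing about fibers over a fixed divisor such as $\cT$; and the semi-homogeneity machinery you want to use for the upgrade lives on abelian varieties, whereas the fibers over $\cT$ are reducible and non-reduced, so neither Mukai's results nor Corollary~\ref{crl:caravilla} applies to them. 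Since $\cT$ is a divisor, losing stability there destroys the codimension-two bound at the central fiber, so your propagation step has nothing to propagate; as you yourself admit, simplicity (which is all that Proposition~\ref{prp:acca20}(b) gives over $\cT$) is not enough.

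The missing idea, and the route taken in the paper, is to avoid proving anything about stability over $\cT$ at the central fiber. One propagates from $X_0=S^{[n]}$ only what is actually known there, namely the \emph{simplicity} bound of Proposition~\ref{prp:acca20}(b): for $[(X,h)]$ in a dense open subset of $\NL(d_0)^*$ the locus $\cV_X\subset\PP^n$ of fibers on which $\cE_{X|X_t}$ is simple has complement of codimension at least $2$. The upgrade from simplicity to stability is then performed on the nearby $X$, and only for \emph{smooth} fibers $X_{t_0}$ with $t_0\in\cV_X$: modularity gives $\Delta(\cE_{X|X_t})\cdot\theta_t^{n-2}=0$ on every fiber, the restriction to a general smooth fiber is slope stable and hence semi-homogeneous, and upper semicontinuity applied to the loci $\Psi^0(\cE_{X|X_t})\supset\Phi^0(\cE_{X|X_t})$ (Hom-nonvanishing versus twist-isomorphism) shows that $\cE_{X|X_{t_0}}$ is semi-homogeneous whenever it is simple; Mukai's \cite[Proposition~6.13]{muksemi} then gives slope semistability, and Corollary~\ref{crl:caravilla}, using $r(\cE_{X|X_{t_0}})=r_0^n$, $c_1(\cE_{X|X_{t_0}})=r_0^{n-1}gld_0\theta_{t_0}$ and $\gcd\{r_0,gld_0\}=1$, gives slope stability. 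Finally, the singular fibers of the nearby $X$ are handled through Proposition~\ref{prp:torcere}: for general $[(X,h)]$ the Lagrangian fibration $X\to\PP^n$ is a Tate-Shafarevich twist of $\cJ(S)\to|D|$ for a general polarized $K3$ of genus $n$, so its discriminant is the dual of a $K3$ and in particular an \emph{irreducible} divisor --- there is no analogue of the two-part discriminant $\cT\cup\bigcup_j\cD(b_j)$ of $S^{[n]}\to\PP^n$ --- and therefore a single point of the discriminant with slope-stable restriction, supplied by Remark~\ref{rmk:halloween} and openness of stability, yields stability over a dense open subset of the discriminant. The union of this open subset with the smooth-fiber part of $\cV_X$ where stability has been established gives the required $\cU_X$ with complement of codimension at least $2$.
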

Before proving Proposition~\ref{prp:stabincoduno} we need to go through a couple of results. The result below   for abelian surfaces is~\cite[Proposition~4.4]{ogfascimod}. Part of the proof below is literally taken from the proof of 
Proposition~4.4 in~\cite{ogfascimod}, but there is a crucial extra input (not needed in dimension $2$), namely~\cite[Theorem~2]{simpshiggs}.
\begin{prp}\label{prp:numeretti}
Let $(A,\theta)$ be a principally polarized abelian  variety of dimension $n$, and let $\cF$ be a 
  slope semistable vector bundle on $A$ such that $c_1(\cF)$ is a multiple of $\theta$ and
$\Delta(\cF)\cdot \theta^{n-2}=0$.
Then there exist integers $r_0,m,b_0$ with $r_0,m\in\NN_{+}$ and $\gcd\{r_0,b_0\}=1$ such that 
\begin{equation}\label{dueg}
r(\cF)=r_0^n m,\quad c_1(\cF)=r_0^{n-1} b_0 m \theta.
\end{equation}
If $\cF$ is  not $\theta$ slope stable, then we may assume that $m>1$.
\end{prp}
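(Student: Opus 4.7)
The plan is to reduce to the case of simple semi-homogeneous vector bundles on $(A,\theta)$ and then to extract the divisibility relations from Mukai's structure theorem for such bundles. First I would write the slope $\mu_{\theta}(\cF)=b/r$ (where $c_1(\cF)=b\theta$) in lowest terms as $b_0/r_0$, with $r_0\in\NN_+$ and $\gcd(r_0,b_0)=1$ (taking $r_0=1$, $b_0=0$ if $c_1(\cF)=0$).

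Next, I would invoke \cite[Theorem~2]{simpshiggs} (the crucial extra ingredient absent from \cite[Proposition~4.4]{ogfascimod}). The hypothesis $\Delta(\cF)\cdot \theta^{n-2}=0$ saturates the Bogomolov inequality, and combined with $\theta$-semistability it forces each slope-Jordan--H\"older graded piece $G_i$ of $\cF$ to be simple, to have slope $b_0/r_0$, to satisfy $\Delta(G_i)\cdot \theta^{n-2}=0$, and to admit a projectively flat Hermite--Einstein connection. On an abelian variety, a simple projectively flat vector bundle is precisely a simple semi-homogeneous vector bundle in the sense of Mukai, so each $G_i$ falls into Mukai's classification.

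Now I would apply Mukai's integrality. For a simple semi-homogeneous vector bundle $G$ on $(A,\theta)$ of rank $r$ with $c_1(G)=b\theta$, one has $\ch(G)=r\exp(c_1(G)/r)$, and integrality of $\int_A\ch_n(G)=b^n/r^{n-1}$ (using $\int_A\theta^n=n!$ for the principal polarization) yields $r^{n-1}\mid b^n$. Writing $r=kr_0$ and $b=kb_0$ with $\gcd(r_0,b_0)=1$ gives $r_0^{n-1}\mid k$, hence $k=r_0^{n-1}m_i$ for some $m_i\in\NN_+$, so
\begin{equation*}
r(G_i)=r_0^n m_i,\qquad c_1(G_i)=r_0^{n-1}b_0 m_i\,\theta.
\end{equation*}
Summing over the Jordan--H\"older factors produces $r(\cF)=r_0^n m$ and $c_1(\cF)=r_0^{n-1}b_0 m\theta$ with $m=\sum m_i$. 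If $\cF$ is not $\theta$ slope stable, the JH filtration has length at least two, so $m\ge 2$, giving the final clause.

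The main obstacle is Step~2: justifying, in arbitrary dimension, that the vanishing of $\Delta(\cF)\cdot \theta^{n-2}$ combined with semistability propagates to each Jordan--H\"older factor and forces projective flatness. In dimension two this follows from Bogomolov's inequality in a direct way, but in higher dimensions the argument requires the heavier Simpson machinery (hence the acknowledgment to Macr\`\i). A secondary subtlety is to check that the twist needed to apply Simpson's theorem (to bring the slope to zero when applying the Kobayashi/DUY formalism) does not destroy the semi-homogeneity structure, but this is handled by the translation invariance of $\Pic^0(A)$.
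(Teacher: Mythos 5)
Your proposal is correct in substance and follows the same skeleton as the paper: pass to the slope Jordan--H\"older factors, use \cite[Theorem~2]{simpshiggs} (after reducing to a degree--zero situation) to see that the factors are locally free and flat, recognize them as simple semi-homogeneous bundles, and extract the arithmetic. The genuine difference is the last step: the paper invokes Mukai's classification (\cite[Propositions~A.2--A.3]{ogfascimod}, \cite[Remark~7.13]{muksemi}), which gives for each factor $\cQ_i$ \emph{exactly} $r(\cQ_i)=r_i^n$ and $c_1(\cQ_i)=r_i^{n-1}b_i\theta$ with $\gcd\{r_i,b_i\}=1$, and then equality of slopes forces all $(r_i,b_i)$ to coincide, so $m$ is the number of factors; you instead deduce only the divisibility $r_i^{n-1}\mid b_i^n$ from $\chi(\cQ_i)=b_i^n/r_i^{n-1}\in\ZZ$ (via $\ch=r\exp(c_1/r)$ and $\Td(A)=1$), which allows $m_i>1$ per factor but still yields \eqref{dueg} with $m=\sum m_i$, and $m\ge 2$ in the strictly semistable case -- weaker per factor, but fully sufficient for the statement and a bit more self-contained. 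Two smaller caveats. The facts you say Simpson's theorem ``forces'' -- that each factor has $c_1$ proportional to $\theta$ and $\Delta(\cQ_i)\cdot\theta^{n-2}=0$ -- are not part of that theorem; the paper obtains them first, by the Bogomolov/Hodge-index argument of \cite[Proposition~4.4]{ogfascimod} applied to a destabilizing sequence, and you need the same computation before Simpson can be applied to the factors. And your ``secondary subtlety'' is resolved by the wrong mechanism: the twist $c_1(\cF)/r$ is fractional, and the paper's fix is not translation invariance of $\Pic^0(A)$ but the isogeny trick -- pull back along the multiplication map $m_r\colon A\to A$ (which preserves slope semistability and the discriminant vanishing), twist by an honest line bundle to kill $c_1$, apply Simpson there, and descend local freeness along the finite flat morphism.
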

\begin{proof}
If $\cF$ is slope stable, then it is simple semi-homogeneous by~\cite[Proposition~A.2]{ogfascimod}, and hence we may write~\eqref{dueg} with $m=1$ by~\cite[Proposition~A.3]{ogfascimod}.

Suppose that $\cF$ is strictly $\theta$ slope semistable, i.e.~there exists a destabilizing   exact sequence of torsion free sheaves 
\begin{equation}\label{potter}
0\lra \cG\lra \cF\lra \cH\lra 0
\end{equation}
with $\cG$ slope-stable.   Arguing as in the proof of~\cite[Proposition~4.4]{ogfascimod} one proves that
\begin{equation}\label{arcacor}
c_1(\cG)=a\theta,\quad c_1(\cH)=b\theta,\quad \Delta(\cG)\cdot \theta^{n-2}= \Delta(\cH)\cdot \theta^{n-2}=0.
\end{equation}
Let us prove that $\cG$ and $\cH$ are locally free. Let $r:=r(\cF)$ and let $m_r\colon A\to A$ be the multiplication by $r$ map. Let $\cL$ be a line bundle on $A$ such that $c_1(\cL)=-ra\theta$, and let $\cE:=m_r^{*}(\cF)\otimes\cL$. Then 
$\cE$ is slope semistable (because $\cF$ is) and
\begin{equation*}
c_1(\cE)=0,\quad \Delta(\cE)\cdot \theta^{n-2}=0.
\end{equation*}
By~\cite[Theorem~2]{simpshiggs} it follows that every quotient of the (slope) Jordan-H\"older filtration of $\cE$ is locally free. Since $m_r^{*}(\cG)\otimes\cL$ is a polystable subsheaf of $\cE$ we get that it is locally free. Thus $m_r^{*}(\cG)$  is locally free and hence also $\cG$ is locally free. By the equalities in~\eqref{arcacor} we may iterate this argument to show that also $\cH$ is locally free.

It follows that if 
\begin{equation*}
0=\cG_0\subsetneq\cG_1\subsetneq\ldots\subsetneq\cG_m=\cF
\end{equation*}
is a (slope) Jordan-H\"older filtration  of $\cF$, then each quotient $\cQ_i:=\cG_i/\cG_{i-1}$ is a 
slope stable   locally free sheaf with 
\begin{equation}
\frac{c_1(\cQ_i)}{\rk(\cQ_i)}=\frac{c_1(\cF)}{r(\cF)},\quad \Delta(\cQ_i)\cdot \theta^{n-2}=0.
\end{equation}
Hence each $\cQ_i$ is simple semi-homogeneous by~\cite[Proposition~A.2]{ogfascimod}, and therefore
by~\cite[Proposition~A.3]{ogfascimod} 
(see also~\cite[Remark~7.13]{muksemi})  there exist coprime integers $r_i,b_i$, with $r_i>0$, such that 
$r(\cQ_i)=r_i^n$ and $c_1(\cQ_i)=r_i^{n-1}b_i\theta$. Let $i,j\in\{1,\ldots,m\}$; since the slopes of $\cQ_i$ and 
$\cQ_j$ are equal, we get that $b_i r_j=b_j r_i$. 
It follows that $r_i=r_j$ and $b_i=b_j$ because  $\gcd\{r_i,b_i\}=\gcd\{r_j,b_j\}=1$. Thus $r(\cF)=m r_0^n$ and $c_1(\cF)=m r^{n-1}_0 b_0\theta$ where $r_0=r_i$ and $b_0=b_i$ for all $i\in\{1,\ldots,m\}$. We have $m\ge 2$ because we assumed that $\cF$ is strictly slope semistable.
\end{proof}
\begin{crl}\label{crl:caravilla}
Let $(A,\theta)$ be a principally polarized abelian variety  of dimension $n$,  and let $\cF$ be a 
 $\theta$ slope-semistable vector bundle on $A$ such that 
$\Delta(\cF)\cdot \theta^{n-2}=0$.
If $r(\cF)=r_0^n$ and  $c_1(\cF)=r_0^{n-1} b_0 \theta$,
 where $r_0, b_0$ are \emph{coprime} integers, then
 $\cF$ is  $\theta$ slope-stable.
\end{crl}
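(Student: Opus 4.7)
The plan is to argue by contradiction using Proposition~\ref{prp:numeretti} as a black box. Suppose that $\cF$ is slope semistable but not slope stable. The hypotheses of Proposition~\ref{prp:numeretti} are satisfied: the bundle is slope semistable, $c_1(\cF)=r_0^{n-1}b_0\theta$ is a multiple of $\theta$, and $\Delta(\cF)\cdot\theta^{n-2}=0$. Hence the proposition produces positive integers $r_0',m$ and an integer $b_0'$ with $\gcd\{r_0',b_0'\}=1$ and, crucially, $m>1$, such that
\begin{equation*}
r(\cF)=r_0'^{n} m, \qquad c_1(\cF)=r_0'^{\,n-1}\, b_0'\, m\,\theta.
\end{equation*}
The key point is then to show that comparing this presentation with the one given in the hypothesis of the corollary forces $m=1$, yielding a contradiction.

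To carry this out I would compute $d:=\gcd\{r(\cF),\,c_1(\cF)\cdot\theta^{-1}\}$ in two ways. Using the presentation from the corollary and $\gcd\{r_0,b_0\}=1$, one gets
\begin{equation*}
d=\gcd\{r_0^{n},r_0^{n-1}b_0\}=r_0^{n-1}.
\end{equation*}
Using the presentation from Proposition~\ref{prp:numeretti} and $\gcd\{r_0',b_0'\}=1$, one gets
\begin{equation*}
d=\gcd\{r_0'^{n}m,\,r_0'^{\,n-1}b_0'm\}=m\,r_0'^{\,n-1}.
\end{equation*}
Hence $r_0^{n-1}=m\,r_0'^{\,n-1}$. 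Combined with the rank identity $r_0^{n}=m\,r_0'^{\,n}$, dividing gives $r_0=r_0'$, and substituting back yields $m=1$, contradicting $m>1$.

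The corollary is therefore essentially a clean divisibility computation, with all the nontrivial work (semihomogeneity of stable quotients with vanishing discriminant against $\theta^{n-2}$, local freeness of Jordan--H\"older factors via Simpson's theorem) already absorbed in Proposition~\ref{prp:numeretti}. The only mild care needed is notational: the symbols $r_0,b_0$ appearing in the hypothesis of the corollary must be distinguished from the $r_0',b_0'$ produced by the proposition until one verifies a posteriori that they agree. No further obstacle is expected.
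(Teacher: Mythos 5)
Your proposal is correct and follows essentially the same route as the paper: argue by contradiction, invoke Proposition~\ref{prp:numeretti} with $m>1$, and use the two coprimality conditions to force $r_0=r_0'$ and $m=1$. The only cosmetic difference is that the paper extracts $r_0=s_0$ by cross-multiplying the two lowest-terms expressions for the slope ($s_0b_0=c_0r_0$) rather than by your gcd computation $\gcd\{r(\cF),\,b\}=r_0^{n-1}=m\,r_0'^{\,n-1}$; both are equivalent elementary arithmetic.
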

\begin{proof}
By contradiction. Suppose that $\cF$ is not   $\theta$ slope-stable. By Proposition~\ref{prp:numeretti} we may write
$r(\cF)=s_0^n m$, $c_1(\cF)=s_0^{n-1} c_0 m\theta$ where $s_0,m,c_0$ are integers (with $s_0,m>0$), $s_0,c_0$ are coprime and $m>1$. It follows that $s_0 b_0=c_0 r_0$. Since $\gcd\{r_0,b_0\}=1$ and $\gcd\{s_0,c_0\}=1$, we get that $r_0=s_0$ and hence $m=1$. This is a contradiction.
\end{proof}
\begin{proof}[Proof of Proposition~\ref{prp:stabincoduno}]
Let $\varphi\colon\cX\to B$ be as in Definition~\ref{dfn:eccobi}.  Recall that $X_0=\varphi^{-1}(0)\cong S^{[n]}$, where $S$ is an elliptic $K3$ surface as in Claim~\ref{clm:classigiuste}. 
Let $\cE_0:=\cF[n]^{+}$ be the vector bundle on $X_0$ of Claim~\ref{clm:classigiuste}. 
If $B$ is small enough,  the vector bundle $\cE_0$ on $X_0$ deforms uniquely to a vector bundle $\cE_b$ on $X_b$,  hence we get a vector bundle 
$\cE_X$ on $X$ for $[(X,h)]$  in a dense open subset  $\cU\subset\NL(d_0)$. Moreover $\cE_X$ is $h$ slope stable and Items~(1) and~(2) of Proposition~\ref{prp:stabincoduno} hold, see the proof of Proposition~\ref{prp:stabesiste}. For $[(X,h)]\in \NL(d_0)^*$, where
  $\NL(d_0)^*\subset\cU$   is an open dense subset, the restriction of $\cE_X$ to a general smooth fiber  of the Lagrangian fibration $X\to \PP^n$ is slope stable, see  Proposition~\ref{prp:stabesiste}. 
 
Let  $\cV_0\subset\PP^n$ be the set of $t$ such that the restriction of $\cE_0$ to the fiber over $t$ of the Lagrangian fibration $S^{[n]}\to\PP^n$ is simple. By Item~(b) of Proposition~\ref{prp:acca20}, $\cV_0$ is an open subset whose complement has codimension (in  $\PP^n$) at least $2$.
It follows that there exists an    open dense subset $\NL(d_0)^*_s\subset \NL(d_0)^*$ with the following property:
  if $[(X,h)]\in \NL(d_0)^*_s$
  the subset $\cV_X\subset\PP^n$ parametrizing fibers $X_t$ of 
  the Lagrangian fibration $X\to \PP^n$
 such that the restriction of $\cE_X$ to $X_t$ is simple  has complement of codimension (in $\PP^n$) at least $2$.

 Let  $[(X,h)]\in  \NL(d_0)^*_s$. We claim that if $t\in\cV_X$ and $X_t$ is smooth,  then    the restriction  $\cE_{X|X_t}$ is slope stable. 
 In order to prove this, we start by noting that for any Lagrangian (scheme-theoretic) fiber $X_t$ we have  
\begin{equation}\label{endpiatto}
\int_{[X_t]}\Delta(\cE_{X|X_t})\cdot \left(h_{|X_t}\right)^{n-2}=0.
\end{equation}
In fact the above equality is an easy consequence of the modularity of $\cE_X$, see~\cite[Lemma~2.5]{ogfascimod}.  
Let $X_t$ be a general smooth Lagrangian fiber.  By Proposition~\ref{prp:fibvettnl}  the restriction of $\cE_X$ to  $X_t$  is slope stable,  hence 
$\cE_{X|X_t}$ is semi-homogeneous because of the equality in~\eqref{endpiatto},  see~\cite[Lemma~2.5]{ogfascimod}. 
  It follows that if  $t_0\in\cV_X$ and $X_{t_0}$ is smooth,  then $\cE_{X|X_{t_0}}$  is (simple) semi-homogeneous. To prove this, we introduce some notation. For $t\in\PP^n$ such that $X_t$ is smooth let
\begin{eqnarray*}
\Phi^0(\cE_{X|X_t}) & := & \{(x,[\xi])\in X_t\times \wh{X}_t \mid \exists\ T_x^{*}(\cE_{X|X_t})\overset{\sim}{\lra} (\cE_{X|X_t})\otimes\xi\}, \\
\Psi^0(\cE_{X|X_t}) & := & \{(x,[\xi])\in X_t\times \wh{X}_t \mid \Hom(T_x^{*}(\cE_{X|X_t}),(\cE_{X|X_t})\otimes\xi)\not=0\}, \\
\end{eqnarray*}
where $T_x\colon X_t\to X_t$ is translation by $x\in X_t$ (locally in $t$ we may assume that $X_t$ is a family of abelian varieties rather than 
 torsors over  abelian varieties).
Recall that $\cE_{X|X_t}$ is semi-homogeneous if and only if $\Phi^0(\cE_{X|X_t})$ has dimension at least $n$, and that if that is the case 
then the group $\Phi^0(\cE_{X|X_t})$ has pure dimension $n$. Let $X_t$ be a general smooth Lagrangian fiber, so that  $\cE_{X|X_t}$ is slope stable and semi-homogeneous. Then  $\Phi^0(\cE_{X|X_t})$ has pure dimension $n$, and moreover (by slope stability) 
$\Phi^0(\cE_{X|X_t})=\Psi^0(\cE_{X|X_t})$. By upper semicontinuity of cohomology dimension it follows that every irreducible component of 
$\Psi^0(\cE_{X|X_{t_0}})$ has dimension at least $n$. 
  Now $(0,[\cO_{X_{t_0}}])\in \Psi^0(\cE_{X|X_{t_0}})$ and, since $\cE_{X|X_{t_0}}$ is simple, every non zero homomorphism  
  $\cE_{X|X_{t_0}}\to \cE_{X|X_{t_0}}$ is an isomorphism. It follows that $\Phi^0(\cE_{X|X_{t_0}})$ has dimension at least $n$, and hence 
  $\cE_{X|X_{t_0}}$ is semi-homogeneous. By~\cite[Proposition~6.13]{muksemi} we get that $\cE_{X|X_{t_0}}$ is slope semistable (actually it is Gieseker stable, see Proposition~6.16  loc.cit.).
  Lastly we  prove that   $\cE_{X|X_{t_0}}$ is   slope-stable. Let  $\theta_{t_0}$ be the principal polarization of $X_{t_0}$, see Remark~\ref{rmk:stabonlagfib}. Since $q_{X_{t_0}}(h,f)=ild_0$ (see~\eqref{battleroyal}), we have  $h_{|X_{t_0}}=ild_0\theta_{t_0}$. Hence
\begin{equation}\label{villabardini}
r(\cE_{X|X_{t_0}})=r_0^n,\quad c_1(\cE_{X|X_{t_0}})=r_0^{n-1}gld_0\theta_{t_0}.
\end{equation}
By hypothesis $g$, $l$ and $d_0$ are coprime to $r_0$. By Corollary~\ref{crl:caravilla} we get  that   $\cE_{X|X_{t_0}}$ is   slope-stable.

We finish the proof by showing that if   $[(X,h)]\in  \NL(d_0)^*_s$ is general then 
the restriction of $\cE_X$  to a general singular Lagrangian fiber  is slope-stable.  Since $[(X,h)]\in  \NL(d_0)^*_s$ is general 
the discriminant divisor $\cD_X\subset\PP^n$ parametrizing
singular Lagrangian fibers of $X\to\PP^n$ is the dual of an embedded $K3$ surface $S\subset(\PP^n)^{\vee}$. In fact this holds by 
Proposition~\ref{prp:torcere}. Hence $\cD_X$ is an irreducible  divisor. Thus it suffices to prove that there exist $t\in\cD_X$ such that 
$\cE_{X|X_t}$ is slope stable (for the restriction of $h$ to $X_t$). This follows from Remark~\ref{rmk:halloween}  and openness of slope stability.
\end{proof}
\subsection{Proof of Proposition~\ref{prp:stabunico}}\label{subsec:alfinlaprova}
\setcounter{equation}{0}
The  key result is the following.
\begin{prp}\label{prp:sonoisom}
Keep notation as in Subsection~\ref{subsec:esistestab}.  Assume in addition that  $d_0$ is coprime to $r_0$, and  that the inequality in~\eqref{psicologa} holds. Let $[(X,h)]\in \NL(d_0)^{**}$ be a general point. Then (up to isomorphism) there exists one and only one $h$ slope stable vector bundle $\cE$ on $X$ such that the equalities in~\eqref{chernvbe} hold. 
\end{prp}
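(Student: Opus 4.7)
Existence is already delivered by Proposition~\ref{prp:stabincoduno}, so the task is to establish uniqueness. Let $\cE$ be the vector bundle produced there and suppose $\cE'$ is any other $h$-slope stable vector bundle on $X$ satisfying~\eqref{chernvbe}. The plan is to compare $\cE$ and $\cE'$ along the Lagrangian fibration $\pi\colon X\to\PP^n$, reduce their difference to an ambiguity classified by relative-Picard torsion, collapse this ambiguity by monodromy (Corollary~\ref{crl:modinv}), and finally promote the resulting fiberwise agreement to a global isomorphism.

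Since $a(\cE')=a(\cE)$, Proposition~\ref{prp:piazzolla} shows $h$ is $a(\cE')$-suitable relative to $\pi$, so the suitability theory of~\cite{ogfascimod} forces $\cE'|_{X_t}$ to be slope semistable on a general smooth Lagrangian fiber $X_t$. By Remark~\ref{rmk:k3nppav} the class $h|_{X_t}=ild_0\theta_t$ is a multiple of the principal polarization $\theta_t$, and since $\gcd(gld_0,r_0)=1$ (from~\eqref{nicoletta} and our choice of $d_0$), the vanishing $\int_{X_t}\Delta(\cE'|_{X_t})\cdot(h|_{X_t})^{n-2}=0$ (Lemma~2.5 of~\cite{ogfascimod}) combined with Corollary~\ref{crl:caravilla} upgrades semistability to slope stability of $\cE'|_{X_t}$. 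The same holds for $\cE|_{X_t}$ by Proposition~\ref{prp:stabincoduno}(3). Both restrictions are therefore simple semi-homogeneous (Proposition~A.2 of~\cite{ogfascimod}) with identical rank $r_0^n$ and first Chern class $r_0^{n-1}gld_0\,\theta_t$. By Proposition~A.3 of loc.\ cit., the set
\[
\Xi_t:=\{\xi\in \hat{X}_t : \cE'|_{X_t}\cong \cE|_{X_t}\otimes\xi\}
\]
is a nonempty coset of the subgroup $K_t:=\{\eta\in\hat{X}_t : \cE|_{X_t}\cong \cE|_{X_t}\otimes\eta\}\subset\hat{X}_t[r_0^n]$, and Mukai's theory gives $|K_t|=r_0^{2n}$.

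The decisive point is that $\Xi_t$ is canonically attached to the global pair $(\cE,\cE')$, hence invariant under the monodromy action of $\pi_1(U,t)$ on $\hat{X}_t[r_0^n]$, where $U\subset\PP^n$ is the regular locus of $\pi$. Using the principal polarization to identify $\hat{X}_t\cong X_t$, Corollary~\ref{crl:modinv} forces $\Xi_t=\hat{X}_t[r_0]$; in particular the origin lies in $\Xi_t$, so $\cE'|_{X_t}\cong \cE|_{X_t}$ on a general smooth Lagrangian fiber. To promote this fiberwise coincidence to an isomorphism on $X$, I would study $\cG:=\pi_*(\cE^{\vee}\otimes\cE')$, a coherent sheaf on $\PP^n$ of generic rank one; its reflexive hull is a line bundle $\cO_{\PP^n}(d)$, and adjunction produces a nonzero morphism $\cE\otimes\pi^*\cO_{\PP^n}(d)\to\cE'$. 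Comparing the slopes of these two $h$-slope stable vector bundles of rank $r_0^n$ forces $d\cdot q_X(h,f)=0$, whence $d=0$, and any nonzero morphism between slope stable bundles of equal slope and rank is an isomorphism.

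The main obstacle is the globalization step: controlling $\cG$ away from the generic smooth fiber, in particular across the discriminant locus of $\pi$ and the non-reduced locus $\cT$ where $\cE|_{X_t}$ may fail to be simple. I would handle this by working first over $\pi^{-1}(\cU_X)$, with $\cU_X\subset\PP^n$ the open subset from Proposition~\ref{prp:stabincoduno}(3) whose complement has codimension at least two, extracting a line bundle from the reflexive hull of $\cG$ there, extending it across the codimension-two complement by Hartogs, and only then running the slope comparison above.
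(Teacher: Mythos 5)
Your fiberwise analysis coincides with the paper's own argument: suitability of $h$ via Proposition~\ref{prp:piazzolla}, the upgrade from semistability to stability on smooth Lagrangian fibers via $\gcd\{gld_0,r_0\}=1$ and Corollary~\ref{crl:caravilla}, semi-homogeneity from the vanishing of $\Delta\cdot\theta_t^{n-2}$, Mukai's count $r_0^{2n}$ for the set of twists, and the monodromy argument through Corollary~\ref{crl:modinv} giving $\cE'|_{X_t}\cong\cE|_{X_t}$ on the general smooth fiber. The divergence, and the genuine gap, is in your globalization. From the nonzero morphism $\cE\otimes\pi^{*}\cO_{\PP^n}(d)\to\cE'$ between $h$-slope stable bundles of the same rank, stability yields only the inequality $d\int_X f\cdot h^{2n-1}\le 0$, i.e.\ $d\le 0$; it does not yield $d\cdot q_X(h,f)=0$. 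A strictly negative twist is perfectly compatible with the existence of a nonzero map, so as written you cannot conclude $d=0$, hence no isomorphism. Moreover the natural repairs that stay over the regular locus $U$ of $\pi$ do not work: the complement of $U$ is a \emph{divisor}, so equality of $c_1(\cE)$ and $c_1(\cE')$ gives no information after restriction to $\pi^{-1}(U)$ (the discrepancy could be supported on $\pi^{-1}(\cD)$), and the symmetric pushforward $\pi_{*}(\cE'^{\vee}\otimes\cE)$, whose restriction to $U$ is dual to $\cG|_U$, only gives $d+d'\equiv 0$ modulo the degree of the discriminant hypersurface, since $\Pic(U)$ has torsion; combined with $d,d'\le 0$ this is not enough.

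This is precisely why the paper does not bypass the singular fibers. It uses that $[(X,h)]$ is general so that, by Proposition~\ref{prp:torcere}, $\pi$ is a Tate--Shafarevich twist of $\cJ(S)\to|D|$ and the discriminant $B$ is reduced; it then proves $\cE_{X|X_t}\cong\cE_{|X_t}$ also at general points of $B$ by restricting both bundles to the smooth threefold $\pi^{-1}(T)$ for a curve $T$ transverse to $B$ and invoking~\cite[Lemma~7.5]{ogfascimod}, whose hypotheses are available thanks to Item~(3) of Proposition~\ref{prp:stabincoduno} (stability of $\cE_{X}$ on fibers over $\cU_X$, with codimension-two complement). Only then does one have fiberwise isomorphisms and simplicity over an open $\cU_X^{\dagger}\subset\PP^n$ with complement of codimension two; there the relative Hom pushforward is an honest line bundle on $\PP^n$, equality of $c_1$'s kills the twist, and the resulting isomorphism over $\pi^{-1}(\cU_X^{\dagger})$ extends to $X$ because the complement has codimension two and both sheaves are locally free. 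In short: your reduction over the smooth fibers is correct and matches the paper, but the final slope comparison is unjustified, and pinning down the twist genuinely requires crossing the discriminant divisor, which is the content of the paper's threefold/Lemma~7.5 step that your proposal omits.
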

\begin{proof}
Let $\cE_X$ be a vector bundle on $X$ as in Proposition~\ref{prp:stabincoduno}, and let $\cE$ be an $h$ slope stable vector bundle  on $X$ such that the equalities in~\eqref{chernvbe} hold. We prove that $\cE_X$ and $\cE$ are isomorphic.

Let $\pi\colon X\to\PP^n$ be the associated Lagrangian fibration. By Proposition~\ref{prp:piazzolla} the polarization $h$ is $a(\cE)$-suitable, and hence the restriction of $\cE$ to a general Lagrangian fiber is slope semistable. We claim that if $X_t$ is a smooth Lagrangian fiber and $\cE_{|X_{t}}$ is   slope 
semistable, then it is  actually   slope stable. In fact this follows from Corollary~\ref{crl:caravilla} - the computations showing that the hypotheses of Corollary~\ref{crl:caravilla} are satisfied have already been done, see~\eqref{villabardini}. 
The upshot is that there exists a dense open subset $\cU_X^0\subset\PP^n$, contained in the set of regular values of the Lagrangian fibration, with the property that for all $t\in \cU_X^0$ the restrictions $\cE_{X|X_t}$ and $\cE_{|X_t}$ are simple semi-homogeneous vector bundles (since they are slope stable vector bundles and $\Delta(\cE_{X|X_t})\cdot\theta_t^{n-2}=\Delta(\cE_{|X_t})\cdot\theta_t^{n-2}=0$, they are semi-homogeneous by~\cite[Proposition~A.2]{ogfascimod}). 

We claim that if  $t\in \cU_X^0$ then $\cE_{X|X_t}$ and $\cE_{|X_t}$ are isomorphic. First note that, since they are simple semi-homogenous vector bundles with same rank and $c_1$,  the set
\begin{equation*}
V_t:=\{[\xi]\in X_t^{\vee} \mid \cE_{X|X_t}\cong (\cE_{|X_t})\otimes\xi\}
\end{equation*}
is not empty  by~\cite[Theorem~7.11]{muksemi}, and  it has cardinality $r_0^{2n}$ by Proposition~7.1 op.~cit. Note that $V_t\subset X_t[r_0^{n}]$ because $\cE_{X|X_t}$ and 
$\cE_{|X_t}$ have rank $r_0^n$ and isomorphic determinants. 
Next we claim that $V_t$ is invariant under the monodromy action of 
$\pi_1(\cU_X^0,t)$. In fact let
\begin{equation}
\cV:=\bigcup\limits_{t\in\cU_X^0}V_t.
\end{equation}
We show that the forgetful map
\begin{equation}\label{bregovic}
\cV\to \cU_X^0
\end{equation}
is a topological covering. Let $t_1\in \cU_X^0$, and let $\xi_{t_1}\in V_{t_1}$. Let $B\subset \cU_X^0$ be an open (in the classical topology) neighborhood of $t_1$ which is contractible. For each $t\in B$ let 
$\xi_t\subset X_t[r_0^{n}]$ be obtained from $\xi_{t_1}$ by parallel transport. We claim that
\begin{equation}\label{battisti}
\cE_{X|X_t}\cong \left(\cE_{|X_t}\right)\otimes \xi_t\qquad \forall t\in B.
\end{equation}
In fact the traceless endomorphism bundles of the right and left sides of~\eqref{battisti} have vanishing cohomologies, see Theorem~5.8 in~\cite{muksemi}. Since their determinants remain of type $(1,1)$ on $X_t$ for all $t\in B$ (actually for all $t\in\cU_X^0$), 
it follows that each of them  extends uniquely to all $X_{t'}$ for $t'\in B$ close enough to $t$. Since they are isomorphic for $t=t_1$, we get that they are isomorphic for all $t\in B$. This shows that the map in~\eqref{bregovic} is a topological covering.  The proof shows also that monodromy takes $V_t$ to itself.

Since $V_t$ is invariant under the monodromy action of 
$\pi_1(\cU_X^0,t)$, it follows from Corollary~\ref{crl:modinv} that $V_t=A[r_0]$. Thus $0\in V_t$, and therefore $\cE_{X|X_t}\cong \cE_{|X_t}$. 

Now  we use the hypothesis that  $[(X,h)]\in \NL(d_0)^{**}$ is a general point. Then the polarized $K3$ surface $(S,D)$ is a general surface of genus $n$ (see Subsection~\ref{subsec:erwitt}), and by Proposition~\ref{prp:torcere}  the Lagrangian fibration $\pi\colon X\to\PP^n$ is a Tate-Shafarevich twist of the relative Jacobian 
$\cJ(S)\to |D|$. It follows that the discriminant curve $B\subset\PP^n$ is isomorphic to the dual of $S\subset |D|^{\vee}$, and hence is reduced. Let 
$\cU_X^{\dagger}:=\cU_X\setminus \sing B$, where $\cU_X$ is as in Proposition~\ref{prp:stabincoduno}.   Note that $\cU_X^{\dagger}$ is an open subset 
of $\PP^n$ whose complement has codimension at least $2$. One proves that
\begin{equation}\label{sorbole}
\cE_{X|X_t}\cong \cE_{|X_t}\quad \forall t\in \cU_X^{\dagger}
\end{equation}
proceeding as in the proof of~\cite[Proposition~7.4]{ogfascimod}. More precisely there exists a smooth projective curve $T\subset\cU_X^{\dagger}$ containing $t$ and trasverse to $B$ (recall that the complement of $\cU_X^{\dagger}$ in $\PP^n$   has codimension at least $2$). Then $Y:=\pi^{-1}(T)$ is  a smooth projective (integral) variety of dimension $n+1$ and the sheaves $\cF:=\cE_{X|Y}$ and $\cG:=\cE_{|Y}$  satisfy the hypotheses of~\cite[Lemma~7.5]{ogfascimod}, and hence the isomorphism in~\eqref{sorbole} holds by the quoted lemma. 

Since $\cE_{X|X_t}$ and $\cE_{|X_t}$ are simple for all $t\in\cU_X^{\dagger}$, and since $c_1(\cE_X)=c_1(\cE)$, it follows that the restrictions of $\cE_X$ and $\cE$ to $\pi^{-1}(\cU_X^{\dagger})$ are isomorphic, see the proof of Proposition~7.4 in~\cite{ogfascimod}, in particular the beginning of the proof of Lemma~7.5. The complement of $\pi^{-1}(\cU_X^{\dagger})$ in $X$ has codimension at least $2$ because $\pi$ is equidimensional, and hence the isomorphism $\cE_{X|\pi^{-1}(\cU_X^{\dagger})}\overset{\sim}{\lra} \cE_{|\pi^{-1}(\cU_X^{\dagger})}$ extends to an isomorphism $\cE_X\overset{\sim}{\lra} \cE$.
\end{proof}
We are ready to prove Proposition~\ref{prp:stabunico}.
Let $n,r_0,g,l,e$ be as in Theorem~\ref{thm:unicita}. Since the result is trivially true fo $r_0=1$ we assume that $r_0\ge 2$.
Let $\cX\to T_{e}^{il}(2n)$  be a complete family of  polarized 
 HK varieties of Type $K3^{[n]}$ parametrized by $\cK_{e}^{il}(2n)^{\rm good}$. Since $\cK_{e}^{il}(2n)^{\rm good}$ is irreducible we may, and will, assume that $T_{e}^{il}(2n)$ is irreducible.  
 For $t\in T_{e}^{il}(2n)$ we let $(X_t,h_t)$ be the corresponding polarized HK of Type $K3^{[n]}$. Let 
 $m\colon T_{e}^{il}(2n)\to \cK_{e}^{il}(2n)^{\rm good}$ be the moduli map  sending $t$ to $[(X_t,h_t)]$.

By Gieseker and Maruyama there exists a  relative moduli space 
\begin{equation}\label{modrel}
\cM(r_0,g)\overset{f}{\lra} T_{e}^{il}(2n),
\end{equation}
 such that for every $t\in T_{e}^{il}(2n)$ the (scheme theoretic) fiber $f^{-1}(t)$  is isomorphic to the (coarse) moduli space  of 
  $h_t$ slope-stable vector bundles $\cE$ on $X_t$ such that~\eqref{chernvbe} holds. Moreover the morphism $f$ is  of finite type by Maruyama~\cite{marubound}, and hence  $f(\cM(r_0,g))$ is a  constructible subset of $T_{e}^{il}(2n)$.
  
Let $d(r_0,e,l)$ be the right hand side of the inequality in~\eqref{psicologa}. For $t$ in   a dense subset of  
$\bigcup\limits_{d> d(r_0,e,l)}m^{-1}(\NL(d)^{\text{good}})$ the preimage $f^{-1}(t)$ is a singleton by Proposition~\ref{prp:sonoisom}. Since 
 $\bigcup\limits_{d> d(r_0,e,l)}m^{-1}(\NL(d)^{\text{good}})$ is Zariski dense in $T_{e}^{il}(2n)$ (it is the union of an infinite collection of pairwsie distinct divisors), and since
$f(\cM(r_0,g))$ is a  constructible subset of $T_{e}^{il}(2n)$, it follows that for  general $t\in  T_{e}^{il}(2n)$ the fiber 
$f^{-1}(t)$ is a singleton.

Let $[\cE]$ be the unique point of $f^{-1}(t)$ for $t$ a generic point of $m^{-1}(\NL(d)^{\text{good}})$, where $d> d(r_0,e,l)$. 
Then $H^p(X_t, End^0(\cE))=0$ by 
 Proposition~\ref{prp:stabincoduno}. Hence the last sentence of Theorem~\ref{thm:unicita} follows from upper semicontinuity of cohomology.

\subsection{Proof of Proposition~\ref{prp:rangociuno}}\label{subsec:restrizioni}
\setcounter{equation}{0}
Since the natural morphism $\Def(X,\cF)\to\Def(X,h)$ is surjective, for $d_0\gg 0$ there exist extensions of $\cF$ to  polarized HK varieties $(Y,h)$ of type $K3^{[n]}$ with a Lagrangian fibration $\pi\colon Y\to \PP^n$ such that 
\begin{equation}
q_Y(h,f)=d_0\cdot \divisore(h).
\end{equation}
(As usual $f:=c_1(\pi^{*}\cO_{\PP^n}(1))$.) Let $X_t$ be a smooth (Lagrangian) fiber of $\pi$, and let $\theta_t$ be the principal polarization of $X_t$ induced by the Lagrangian fibration (see Remark~\ref{rmk:k3nppav}). We claim that
\begin{equation}\label{longton}
h_{|Y_t}=d_0\cdot \divisore(h) \,\theta_t.
\end{equation}
In fact the above equality follows from the equalities
\begin{equation*}
\int_{Y_t}(h_{|Y_t})^n=\int_{X}h^n\cdot f^n=n!\, q_X(h,f)^n=n! \, (d_0\cdot\divisore(h))^n.
\end{equation*}
By the equality in~\eqref{longton} we get that
\begin{equation*}
c_1({\cE|Y_t})=a\cdot d_0\cdot \divisore(h) \,\theta_t.
\end{equation*}
By Proposition~\ref{prp:numeretti} we may write
\begin{equation}
r(\cE)=r_0^n m,\quad a\cdot d_0\cdot \divisore(h)=r_0^{n-1} b_0 m,
\end{equation}
where $r_0,m,b_0$ are integers, $r_0,m\in\NN_{+}$ and $\gcd\{r_0,b_0\}=1$. 
Choose $d_0$ coprime to $r(\cE)$: then $d_0$ divides $b_0$ and the equation in~\eqref{thatsamore} holds with $b_0'=b_0/d_0$.

 \bibliography{ref-vbs-on-type-k3n}
 \end{document}